\DeclareMathOperator\supp{supp}
\renewcommand{\theequation}{\arabic{section}.\arabic{equation}}
\newcommand*{\Vector}[1]{{\boldsymbol{\mathbf{#1}}}}
\newcommand*{\Diff}{\mathop{}\!\mathrm{d}}
\newcommand*{\imp}{\mathrm{imp}}
\newcommand*{\Order}{\operatorname{O}}
\theoremstyle{plain}
\newtheorem{theorem}{Theorem}[section]
\newtheorem{lemma}[theorem]{Lemma}
\newtheorem{corollary}[theorem]{Corollary}
\newtheorem{remark}[theorem]{Remark}
\newtheorem{assumption}{Assumption}[section]
\newtheorem{definition}{Definition}[section]
\newcommand{\vertiii}{{\vert\kern-0.25ex\vert\kern-0.25ex\vert}}
\newtheoremstyle{claim}
  {}
  {}
  {\it}
  {}
  {\rm}
  {.}
  {.5em}
  {}
\theoremstyle{claim}
  \newcommand\figcaption{\def\@captype{figure}\caption}
  \newcommand\tabcaption{\def\@captype{table}\caption}
\begin{document}

\title{A hybrid two-level weighted Schwartz method for time-harmonic Maxwell equations}

\author{Ziyi Li}
\author{Qiya Hu*}

 \thanks{1. LSEC, ICMSEC, Academy of Mathematics and Systems Science, Chinese Academy of Sciences, Beijing
 100190, China; 2. School of Mathematical Sciences, University of Chinese Academy of Sciences, Beijing 100049,
 China (liziyi@lsec.cc.ac.cn, hqy@lsec.cc.ac.cn). The work of the author was supported by the National Natural Science Foundation of China grant G12071469.\\
 $\ast$ Correspondence author}

\maketitle

{\bf Abstract.} This paper concerns the preconditioning technique for discrete systems arising from time-harmonic Maxwell equations with absorptions, where the discrete systems are generated by N\'ed\'elec finite
element methods of fixed order on meshes with suitable size. This kind of preconditioner is defined as a two-level hybrid form, which falls into the class of ``unsymmetrically weighted'' Schwarz method based on the overlapping domain decomposition with impedance boundary subproblems. The coarse space in this preconditioner is constructed by specific eigenfunctions solving a series of generalized eigenvalue problems in the local discrete Maxwell-harmonic spaces according to a user-defined tolerance $\rho$. We establish a stability result for the considered discrete variational problem. Using this discrete stability, we prove that the two-level hybrid Schwarz preconditioner is robust in the sense that the convergence rate of GMRES is independent of the mesh size, the subdomain size and the wave-number when $\rho$ is chosen appropriately. We also define an economical variant that avoids solving generalized eigenvalue problems. Numerical experiments confirm the theoretical results and illustrate the efficiency of the preconditioners.

{\bf Key words.} Time-harmonic Maxwell equations, large wave number, domain decomposition, adaptive coarse space, convergence

{\bf AMS subject classifications}.

65N30, 65N55.

\pagestyle{myheadings}
\thispagestyle{plain}
\markboth{}{Ziyi Li and Qiya Hu}

\section{Introduction}

Let $\Omega$ be a bounded, connected and Lipschitz domain in $\mathbb{R}^3$. Consider the following time-harmonic Maxwell equation with absorption:
\begin{equation}\label{eq:shiftMaxwell}
    \left\lbrace
    \begin{aligned}
         & \nabla\times(\nabla\times\Vector{E}) -(\kappa^2+\mathrm{i}\epsilon)\Vector{E}=\Vector{J}, & & \text{in }\Omega, \\
         & (\nabla\times\Vector{E})\times\Vector{n} -\mathrm{i}\kappa\Vector{E}_T=\Vector{g}, & & \text{on }\partial\Omega,
    \end{aligned} \right.
\end{equation}
where ${\bf n}$ denotes the unit outward normal on the boundary $\partial\Omega$, $\Vector{E}_T =(\Vector{n}\times\Vector{E})\times\Vector{n}|_{\partial\Omega}$ is the tangential component of $\Vector{E}$
on the boundary $\partial\Omega$, $\kappa$ is the wave number and $\epsilon$ is an absorption parameter.

As the fundamental model describing the propagation and scattering of electromagnetic waves, time-harmonic Maxwell equations have wide applications in engineering fields. Numerical simulation of this kind of equation
is undoubtedly an important topic. The discrete systems arising from time-harmonic Maxwell equations \eqref{eq:shiftMaxwell} with a large wave number $\kappa$ are often huge because of the so-called ``wave-number pollution" \cite{melenk2010convergence,melenk2023wavenumber, melenk2011wavenumber,wu2014pre}. Since the coefficient matrices of these systems are highly indefinite and heavily
ill-conditioned as $\kappa$ increases, the classical preconditioning techniques for $H(\bf{curl})$-elliptic problems \cite{hu2023convergence, hu2004substructuring, toselli2000overlapping} are inefficient for
time-harmonic Maxwell equations with a large wave number $\kappa$ and a small (or mild) absorption parameter $\epsilon$ ($\epsilon\ll\kappa^2$).
For example, the standard additive Schwarz method (ASM) for the edge finite element system arising from \eqref{eq:shiftMaxwell} with $\epsilon=0$ converges only if the coarse-grid size $H\leq C\kappa^{-1}$ with a sufficiently small constant $C$ \cite{gopalakrishnan2003overlapping}. It was shown in \cite{bonazzoli2019domain} that the standard additive Schwarz preconditioner for the edge finite element system arising from \eqref{eq:shiftMaxwell} possesses uniform convergence under the assumptions that $\epsilon\sim \kappa^2$, $H=\Order(\kappa^{-1})$ and the overlap size $\delta\sim H$. It is clear that the condition $\epsilon\sim \kappa^2$ is too limited, and the requirement $H=\Order(\kappa^{-1})$ leads to a huge coarse problem to be solved when $\kappa$ is large. Designing an efficient solution method for \eqref{eq:shiftMaxwell} with a small absorption parameter $\epsilon$ (in particular $\epsilon=0$) is a difficult task. To our knowledge, the convergence theory for time-harmonic Maxwell equations with a large wave number $\kappa$ and a small (or mild) absorption parameters $\epsilon$ ($\epsilon \ll\kappa^2$) is still missing.

For the pure Helmholtz problem, one can introduce an absorption parameter $\epsilon>0$ and use the Helmholtz system with such an absorption parameter to precondition the pure Helmholtz system. Such an idea, which originated in \cite{erlangga2004class} and is called ``shifted Laplacian preconditioning'', brought new inspiration to the design and analysis of preconditioners for the pure Helmholtz problem. It was shown in
\cite {gander2015applying} that a shifted Helmholtz system is a robust preconditioner of the pure Helmholtz system when the ``shifted (absorption) parameter" $\epsilon$ satisfies $\epsilon\leq c_0\kappa$ with a proper constant $c_0$. According to the discussion in \cite{bonazzoli2019domain}, we expect that the above result can be extended to time-harmonic Maxwell equations, i.e., the discrete system arising from \eqref{eq:shiftMaxwell} with a slightly large absorption parameter $\epsilon$ ($\epsilon\sim \kappa$) is a good preconditioner of the one arising from \eqref{eq:shiftMaxwell} with small absorption parameters $\epsilon$ (in particular $\epsilon=0$). This inspires us that we need only to study the preconditioning technique for shifted  problem with relatively mild shifted (absorption) parameter $\epsilon$ (see \cite{graham2020domain} and \cite{bonazzoli2019domain} for the details).

In this paper we are concerned with an overlapping domain decomposition method for time-harmonic Maxwell equations \eqref{eq:shiftMaxwell} with relatively mild absorption parameters. The weighted additive Schwarz method
with local impedance conditions (WASI) shows higher efficiency in solving Helmholtz equations. The one-level WASI was analyzed in \cite{gong2023convergence, graham2020domain}, and the two-level WASI with spectral
coarse spaces was proposed and tested in \cite{bootland2021comparison, conen2014coarse}. Recently, a two-level hybrid WASI with a novel spectral space for Helmholtz equations was proposed and analyzed in \cite{hu2024novel}.
In the current paper, we extend the method proposed in \cite{hu2024novel} to the time-harmonic Maxwell equations \eqref{eq:shiftMaxwell} with mild absorption parameters $\epsilon$, which is discretized by the standard N\'ed\'elec finite element method. In this method, the coarse space is constructed by solving a series of local generalized eigenvalue problems in the local discrete Maxwell-harmonic spaces and by choosing specific eigenfunctions as local coarse basis according to a user-defined tolerance $\rho$. We focus on the convergence analysis of the proposed hybrid two-level Schwarz preconditioner for Maxwell discrete systems.
The key technical tool is a new stability estimate on the corresponding discrete variational problem of \eqref{eq:shiftMaxwell}. The stability of a discrete variational problem of \eqref{eq:shiftMaxwell} is vital
in the numerical analysis of \eqref{eq:shiftMaxwell}. Such stability may be derived by the coercivity or $inf$-$sup$ condition of the sesquilinear form on the considered discrete space.
There seem only a few works on the stability of the discrete variational problem of time-harmonic Maxwell equations. In \cite{bonazzoli2019domain} the stability of the discrete variational problem of \eqref{eq:shiftMaxwell} with $\epsilon\sim\kappa^2$ was directly established since the sesquilinear form is coercive on the edge finite element space when $\epsilon\sim\kappa^2$.
The paper \cite{feng2014absolutely} showed that, under suitable assumptions on the mesh size $h$ and the penalty parameters, the sesquilinear form associated with an interior penalty discontinuous Galerkin method for time-harmonic Maxwell equations is coercive on the considered discrete space, so the discrete variational problem is stable.

Unfortunately, for the considered situation with $\epsilon\ll\kappa^2$ the sesquilinear form derived by \eqref{eq:shiftMaxwell} is not coercive on the N\'ed\'elec finite element space, so we have to
build a discrete $inf$-$sup$ condition. Using a wavenumber-explicit estimate obtained in \cite{melenk2023wavenumber} and applying the technique developed in \cite{melenk2010convergence}, we establish
a rational $inf$-$sup$ condition of the sesquilinear form associated with \eqref{eq:shiftMaxwell} on the considered N\'ed\'elec finite element space.
With the help of this $inf$-$sup$ condition, we obtain a discrete stability result associated with \eqref{eq:shiftMaxwell} and further prove that the proposed preconditioner is robust in the sense that the convergence
rate of the preconditioned GMRES method with this preconditioner is independent of the mesh size $h$, the subdomain size $H$ and the wave-number $\kappa$, provided that the tolerance parameter $\rho$ is small enough.

The paper is organized as follows: In Section \ref{sec:preliminary}, we introduce variational problems of \eqref{eq:shiftMaxwell} and recall their basic properties. In Section \ref{sec:stability}, we establish several stability results for the variational problems with absorption. In Section \ref{sec:preconditioner}, we define a coarse space and construct a two-level overlapping domain decomposition preconditioner with the coarse solver defining
on the coarse space. In Section \ref{sec:convergence}, we give the convergence analysis of the proposed preconditioner for the case with a relatively mild $\epsilon$. In Section \ref{sec:economical}, we define an economical coarse space and the corresponding economical preconditioner. Finally, in Section \ref{sec:experiments} some numerical results are showed to illustrate the efficiency of the proposed preconditioners.

\section{Preliminary}\label{sec:preliminary}
\subsection{Notations}
For a domain $G$ in $\mathbb{R}^2$ or $\mathbb{R}^3$, we use $H^k(G)$ to denote the standard Sobolev space of order $k$. In particular, we have $H^0(G)=L^2(G)$.
Let $H^k(G)^n$ ($n=1,2,3,$) be the corresponding Sobolev space consisting of vector-valued functions with values in $\mathbb{C}^n$, and let $\Vert\cdot\Vert_{k,\Omega}$ denote the Sobolev norm on $H^k(G)^n$.
For $G\subset\mathbb{R}^3$, we could introduce the following vector-valued spaces:
\begin{align*}
    H(\Vector{curl};G)&\coloneqq \{\Vector{v}\in L^2(G)^3 :~ \nabla\times\Vector{v}\in L^2(G)^3\},\\
    L^2_T(\partial G) &\coloneqq \{\Vector{v}\in L^2(\partial G)^3:~ \Vector{v}\cdot\Vector{n} = 0\}.
\end{align*}
In this paper, we use $(\cdot,\cdot)_G$ and $\langle\cdot,\cdot\rangle_{\partial G}$ to denote the complex $L^2(G)^3$ inner product and $L^2_T(\partial G)$ inner product, respectively. For $G\subseteq \Omega$,
introduce the energy space
\begin{equation*}
    H_\imp(\Vector{curl};G)\coloneqq \{\Vector{v}\in H(\Vector{curl};G):~ \Vector{v}_T\in L^2_T(\partial G)\}.
\end{equation*}
We usually equip $H_\imp(\Vector{curl};G)$ with the $\kappa$-weighted inner product
\begin{equation*}
    (\Vector{v},\Vector{w})_{\imp,\kappa,G}\coloneqq (\nabla\times\Vector{v},\nabla\times\Vector{w})_G + \kappa^2 (\Vector{v},\Vector{w})_G + \kappa \langle\Vector{v}_T,\Vector{w}_T\rangle_{\partial G}
\end{equation*}
and the $\kappa$-weighted norm $\Vert\Vector{v}\Vert_{\imp,\kappa,G}\coloneqq (\Vector{v},\Vector{v})_{\imp,\kappa,G}^{\frac{1}{2}}$. For the convergence, we also use the following inner product
\begin{equation*}
    (\Vector{v},\Vector{w})_{\imp^-,\kappa,G}\coloneqq (\nabla\times\Vector{v},\nabla\times\Vector{w})_G + \kappa^2 (\Vector{v},\Vector{w})_G + \kappa \langle\Vector{v}_T,\Vector{w}_T\rangle_{\partial G\cap\Omega}
\end{equation*}
and the corresponding norm $\Vert\Vector{v}\Vert_{\imp^-,\kappa,G}\coloneqq (\Vector{v},\Vector{v})_{\imp^-,\kappa,G}^{\frac{1}{2}}$.

We will apply the finite element method to the discretization of (\ref{eq:shiftMaxwell}). Let $\mathcal{T}_h$ be a quasi-uniform tetrahedron grid posed on $\Omega$ with mesh size $h$, and $V_h(\Omega)$
be the standard N\'ed\'elec finite element space of fixed order $p$ built on $\mathcal{T}_h$ (see \cite{monk2003finite} for its precise definition). We also use $Z_h(\Omega)$ to denote the standard nodal finite element space of the same order built on $\mathcal{T}_h$. For a vector-valued function ${\bf v}$ with appropriate smoothness, we introduce its N\'ed\'elec interpolation ${\bf r}_h{\bf v}$ such that ${\bf r}_h{\bf v}\in V_h(\Omega)$ (see \cite{monk2003finite} for its precise definition). For a polyhedron $G\subseteqq\Omega$, let $V_h(G)$ (rep. $Z_h(G)$) denote the natural restriction of $V_h(\Omega)$ (rep. $Z_h(\Omega)$) on $G$.

Throughout this paper, we use the notation $a\lesssim b$ to represent that there exists a positive number $C$ such that $a\leq C b$, where the constant $C$ is independent of all parameters of interest (here $\epsilon,\kappa,h,H,\delta$ and $l$, with some of them defined later) if not specifically mentioned. Moreover, we say $a\sim b$ if $a\lesssim b$ and $b\lesssim a$. To conclude this subsection, we say domain $D\in\mathbb{R}^d$ has characteristic length $L$ if its diameter $\sim L$, the surface area $\sim L^{d-1}$ and its volume $\sim L^{d}$.

\subsection{Variational problem and its discretization}
We first make a basic assumption on $\kappa$ and $\epsilon$.
\begin{assumption}
    The parameters $\kappa$ and $\epsilon$ satisfy
    \begin{equation*}
        \kappa\gtrsim 1,\quad 0\leq\epsilon\leq\kappa^2.
    \end{equation*}
\end{assumption}

For a positive parameter $\epsilon>0$, define the sesquilinear form $a_{\epsilon}(\cdot,\cdot):~H_\imp(\Vector{curl};\Omega)\times H_\imp(\Vector{curl};\Omega)\rightarrow\mathbb{C}$ by
\begin{equation*}
    a_\epsilon(\Vector{E},\Vector{\xi})\coloneqq (\nabla\times\Vector{E},\nabla\times\Vector{\xi})_\Omega - (\kappa^2+\mathrm{i}\epsilon)(\Vector{E},\Vector{\xi })_{\Omega} - \mathrm{i}\kappa\langle\Vector{E}_T,\Vector{\xi}_T\rangle_{\partial\Omega}.
\end{equation*}
Let $\Vector{J}\in L^2(\Omega)^3$ and $\Vector{g}\in L^2_T(\partial\Omega)$, then the variational problem of \eqref{eq:shiftMaxwell} is: find $\Vector{E}\in H_\imp(\Vector{curl};\Omega)$ such that
\begin{equation}\label{eq:shiftMaxwellVF}
    a_\epsilon(\Vector{E},\Vector{\xi}) = \Vector{F}(\Vector{\xi}),\quad\forall\Vector{\xi}\in H_\imp(\Vector{curl};\Omega),
\end{equation}
where
\begin{equation*}
    \Vector{F}(\Vector{\xi})\coloneqq \int_\Omega\Vector{J}\cdot\overline{\Vector{\xi}}\Diff  V +\int_{\partial\Omega}\Vector{g}\cdot\overline{\Vector{\xi}}_T\Diff  S.
\end{equation*}
The discrete variational problem of \eqref{eq:shiftMaxwellVF} writes: find $\Vector{E}_h\in V_h(\Omega)$ such that
\begin{equation}\label{eq:discreteMVF}
    a_\epsilon(\Vector{E}_h,\Vector{\xi}_h) = \Vector{F}(\Vector{\xi}_h)\quad\forall \Vector{\xi}_h\in V_h(\Omega).
\end{equation}
Besides, define a discrete operator $A_\epsilon: V_h(\Omega)\rightarrow V_h(\Omega)$ as
\begin{equation*}
    (A_\epsilon \Vector{E}_h,\Vector{\xi}_h)_\Omega = a_\epsilon(\Vector{E}_h,\Vector{\xi}_h),\quad\forall \Vector{\xi}_h\in V_h(\Omega).
\end{equation*}
then \eqref{eq:discreteMVF} is equivalent to the operator equation
\begin{equation}\label{eq:operatorAhM}
    A_\epsilon \Vector{E}_h=\Vector{f}_h,\quad \Vector{E}_h, \Vector{f}_h\in V_h(\Omega),
\end{equation}
where $f_h$ is the Riesz representor of $\Vector{F}$ on $V_h(\Omega)$. It's well known that there exists the so-called ``wave number pollution" for the finite element discretization of the time-harmonic Maxwell equation. In order to eliminate this effect, we make the following assumption on the mesh size $h$.
\begin{assumption}\label{assump:meshsize}
    The mesh size $h$ satisfies  $h\sim \kappa^{-(1+\gamma)}$ for some $0<\gamma\leq 1$.
\end{assumption}

At the end, we introduce the adjoint sesquilinear form $\bar{a}_\epsilon(\cdot,\cdot)$ of $a_\epsilon(\cdot,\cdot)$,
\begin{equation*}
    \bar{a}_\epsilon(\Vector{E},\Vector{\xi})\coloneqq (\nabla\times\Vector{E},\nabla\times\Vector{\xi})_\Omega - (\kappa^2-\mathrm{i}\epsilon)(\Vector{E},\Vector{\xi })_{\Omega} + \mathrm{i}\kappa\langle\Vector{E}_T,\Vector{\xi}_T\rangle_{\partial\Omega}.
\end{equation*}
It's clear that $\bar{a}_\epsilon(\Vector{E},\Vector{\xi}) = \overline{a_\epsilon(\Vector{\xi},\Vector{E})}$.
\subsection{Basic properties of the sesquilinear forms}
As in the case of Helmholtz problems, the positive absorption makes the sesquilinear forms $a_\epsilon(\cdot,\cdot)$ and $\bar{a}_\epsilon(\cdot,\cdot)$ coercive.

\begin{lemma}[{Continuity and coercivity, \cite[Lemma 2.2, 2.5]{bonazzoli2019domain}}]\label{lemma:contCoerM}
    ~\\
    {\rm (1)} Let $0\leq\epsilon\leq\kappa^2$, then the sesquilinear forms $a_\epsilon(\cdot,\cdot)$ and $\bar{a}_\epsilon(\cdot,\cdot)$ are continuous:
    \begin{equation}\label{eq:continuousM}
        \max\{\vert a_\epsilon(\Vector{v},\Vector{w})\vert,\vert \bar{a}_\epsilon(\Vector{v},\Vector{w})\vert\} \lesssim \Vert\Vector{v}\Vert_{\imp,\kappa,\Omega} \Vert{\Vector{w}}\Vert_{\imp,\kappa,\Omega}
    \end{equation}
    for all $\Vector{v},\Vector{w}\in H_\imp(\Vector{curl};\Omega)$.
    \\
    {\rm (2)} Let $\epsilon>0$. The sesquilinear forms $a_{\epsilon}(\cdot,\cdot)$ and $\bar{a}_\epsilon(\cdot,\cdot)$ are coercive:
    \begin{equation}\label{eq:coerciveM}
      \min\{\vert a_\epsilon(\Vector{v},\Vector{v})\vert,\vert \bar{a}_\epsilon(\Vector{v},\Vector{v})\vert\} \gtrsim \frac{\epsilon}{\kappa^2} \Vert{\Vector{v}}\Vert_{\imp,\kappa,\Omega}^2
    \end{equation}
    for all $\Vector{v}\in H_\imp(\Vector{curl};\Omega)$.
\end{lemma}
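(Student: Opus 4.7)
Both forms are related by $\bar a_\epsilon(\mathbf{v},\mathbf{w})=\overline{a_\epsilon(\mathbf{w},\mathbf{v})}$, so it suffices to handle $a_\epsilon$ and transfer the bounds to $\bar a_\epsilon$ at the end. The continuity bound (1) is immediate from Cauchy--Schwarz applied term by term: for the curl term I bound $|(\nabla\times\mathbf{v},\nabla\times\mathbf{w})_\Omega|\le \|\nabla\times\mathbf{v}\|_{0,\Omega}\|\nabla\times\mathbf{w}\|_{0,\Omega}$; for the mass term I use $|\kappa^2+\mathrm{i}\epsilon|\le\sqrt{2}\,\kappa^2$ (which is where Assumption~2.1, $0\le\epsilon\le\kappa^2$, enters) together with Cauchy--Schwarz on $(\mathbf{v},\mathbf{w})_\Omega$; for the boundary term I use $|\langle\mathbf{v}_T,\mathbf{w}_T\rangle_{\partial\Omega}|\le \|\mathbf{v}_T\|_{L^2_T(\partial\Omega)}\|\mathbf{w}_T\|_{L^2_T(\partial\Omega)}$. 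Weighting each Cauchy--Schwarz factor by the appropriate power of $\kappa$ and summing matches the definition of $\|\cdot\|_{\imp,\kappa,\Omega}$.

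For (2), the plan is to test with $\mathbf{v}=\mathbf{w}$ and split into real and imaginary parts. A direct computation gives
\begin{equation*}
\mathrm{Re}\,a_\epsilon(\mathbf{v},\mathbf{v})=\|\nabla\times\mathbf{v}\|_{0,\Omega}^{2}-\kappa^{2}\|\mathbf{v}\|_{0,\Omega}^{2},\quad
\mathrm{Im}\,a_\epsilon(\mathbf{v},\mathbf{v})=-\epsilon\|\mathbf{v}\|_{0,\Omega}^{2}-\kappa\|\mathbf{v}_T\|_{L^2_T(\partial\Omega)}^{2}.
\end{equation*}
The imaginary part immediately controls $\epsilon\|\mathbf{v}\|_{0,\Omega}^{2}+\kappa\|\mathbf{v}_T\|^{2}_{L^2_T(\partial\Omega)}$, and since $\epsilon\le\kappa^{2}$ this gives $|a_\epsilon(\mathbf{v},\mathbf{v})|\gtrsim (\epsilon/\kappa^{2})(\kappa^{2}\|\mathbf{v}\|_{0,\Omega}^{2}+\kappa\|\mathbf{v}_T\|^{2}_{L^2_T(\partial\Omega)})$ at no cost.

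The main obstacle is the curl term: the real part is $\|\nabla\times\mathbf{v}\|_{0,\Omega}^{2}-\kappa^{2}\|\mathbf{v}\|_{0,\Omega}^{2}$, which can have either sign. I will handle it by the standard complex-rotation trick: for every $\theta$, $|a_\epsilon(\mathbf{v},\mathbf{v})|\ge \mathrm{Re}\bigl(e^{\mathrm{i}\theta}a_\epsilon(\mathbf{v},\mathbf{v})\bigr)=\cos\theta\cdot\mathrm{Re}\,a_\epsilon-\sin\theta\cdot\mathrm{Im}\,a_\epsilon$. Choosing the phase so that $\tan\theta=\kappa^{2}/\epsilon$ makes the $\kappa^{2}\|\mathbf{v}\|_{0,\Omega}^{2}$ contributions cancel, leaving (after using $\sqrt{\kappa^{4}+\epsilon^{2}}\le\sqrt{2}\,\kappa^{2}$)
\begin{equation*}
|a_\epsilon(\mathbf{v},\mathbf{v})|\gtrsim \frac{\epsilon}{\kappa^{2}}\|\nabla\times\mathbf{v}\|_{0,\Omega}^{2}+\kappa\|\mathbf{v}_T\|^{2}_{L^2_T(\partial\Omega)}.
\end{equation*}
Adding a suitable multiple of this with the earlier bound from $|\mathrm{Im}\,a_\epsilon|$ recovers all three ingredients of $\|\mathbf{v}\|_{\imp,\kappa,\Omega}^{2}$ with the common prefactor $\epsilon/\kappa^{2}$. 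The bound for $\bar a_\epsilon$ follows because $|\bar a_\epsilon(\mathbf{v},\mathbf{v})|=|\overline{a_\epsilon(\mathbf{v},\mathbf{v})}|=|a_\epsilon(\mathbf{v},\mathbf{v})|$. The only subtle point is selecting the rotation angle that simultaneously kills the indefinite $-\kappa^{2}\|\mathbf{v}\|_{0,\Omega}^{2}$ term and keeps the curl term with the right sign; everything else is bookkeeping involving the assumption $\epsilon\le\kappa^{2}$.
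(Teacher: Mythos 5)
Your proof is correct, and it follows essentially the same route as the source the paper cites for this lemma (\cite[Lemmas 2.2, 2.5]{bonazzoli2019domain}): term-by-term Cauchy--Schwarz with the bound $|\kappa^2+\mathrm{i}\epsilon|\le\sqrt{2}\,\kappa^2$ for continuity, and a linear combination of the real and imaginary parts of $a_\epsilon(\Vector{v},\Vector{v})$ (your complex-rotation with $\tan\theta=\kappa^2/\epsilon$) to absorb the indefinite $-\kappa^2\Vert\Vector{v}\Vert_{0,\Omega}^2$ term for coercivity. The paper itself gives no proof, deferring to that reference, so there is nothing further to compare.
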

From Lax-Milgram theorem and Lemma \ref{lemma:contCoerM}, the variational problems \eqref{eq:shiftMaxwellVF} and \eqref{eq:discreteMVF}(and the adjoint variational problems) are well-posed.

\section{Stability results for problems with absorption}\label{sec:stability}
In this section, we will build stability estimates of solutions to \eqref{eq:shiftMaxwellVF} and \eqref{eq:discreteMVF}, where $\Omega$ is replaced by a general domain $G\subseteqq\Omega$. The stability constants explicitly depend on
the wave number $\kappa$, the absorption parameter $\epsilon$ and the characteristic length of ${G}$. Stability estimates for Helmholtz equations with absorption and time-harmonic Maxwell equations without absorption were established in \cite{gander2015applying} and \cite{hiptmair2011stability}, respectively. In this section, we will follow the ideas in these works.

The first result is similar to \cite[Theorem 2.7]{gander2015applying}, which illustrates that the solution to \eqref{eq:shiftMaxwellVF} shows good stability when $\epsilon$ is relatively large.
\begin{theorem}\label{thm:maxwellGerStable}
    Let $\Vector{E}$ be the solution to \eqref{eq:shiftMaxwell}. Assume that $\epsilon\lesssim\kappa^2$, then there holds
    \begin{equation}\label{eq:maxwellGerStable}
        \Vert{\nabla\times\Vector{E}}\Vert_{0,{G}}^2 + \kappa^2\Vert{\Vector{E}}\Vert_{0,{G}}^2 + \kappa\Vert{\Vector{E}_T}\Vert_{0,\partial{G}}^2 \lesssim \frac{\kappa^2}{\epsilon^2}\Vert{\Vector{J}}\Vert_{0,{G}}^2 + \frac{\kappa}{\epsilon}\Vert{\Vector{g}}\Vert_{0,\partial{G}}^2.
    \end{equation}
\end{theorem}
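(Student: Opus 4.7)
The plan is to run a standard coercivity-by-imaginary-part test-function argument, exploiting the absorption $\epsilon>0$ directly. Set $\xi = \Vector{E}$ in the variational identity (read on the subdomain $G$ with impedance data $\Vector{J},\Vector{g}$), obtaining
\begin{equation*}
\|\nabla\times\Vector{E}\|_{0,G}^2 - (\kappa^2+\mathrm{i}\epsilon)\|\Vector{E}\|_{0,G}^2 - \mathrm{i}\kappa\|\Vector{E}_T\|_{0,\partial G}^2
  = (\Vector{J},\Vector{E})_G + \langle\Vector{g},\Vector{E}_T\rangle_{\partial G}.
\end{equation*}

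First I would take the imaginary part. The real-valued quantities $\|\nabla\times\Vector{E}\|_{0,G}^2$ and $\kappa^2\|\Vector{E}\|_{0,G}^2$ drop out, leaving
\begin{equation*}
\epsilon\|\Vector{E}\|_{0,G}^2 + \kappa\|\Vector{E}_T\|_{0,\partial G}^2
  = -\operatorname{Im}(\Vector{J},\Vector{E})_G - \operatorname{Im}\langle\Vector{g},\Vector{E}_T\rangle_{\partial G}.
\end{equation*}
Bounding the right-hand side by Cauchy--Schwarz and applying Young's inequality with weights tuned to the left-hand side,
\begin{equation*}
|(\Vector{J},\Vector{E})_G| \le \tfrac{1}{2\epsilon}\|\Vector{J}\|_{0,G}^2 + \tfrac{\epsilon}{2}\|\Vector{E}\|_{0,G}^2,
\qquad
|\langle\Vector{g},\Vector{E}_T\rangle_{\partial G}| \le \tfrac{1}{2\kappa}\|\Vector{g}\|_{0,\partial G}^2 + \tfrac{\kappa}{2}\|\Vector{E}_T\|_{0,\partial G}^2,
\end{equation*}
and absorbing the second terms into the left, I get
\begin{equation*}
\epsilon\|\Vector{E}\|_{0,G}^2 + \kappa\|\Vector{E}_T\|_{0,\partial G}^2 \lesssim \tfrac{1}{\epsilon}\|\Vector{J}\|_{0,G}^2 + \tfrac{1}{\kappa}\|\Vector{g}\|_{0,\partial G}^2.
\end{equation*}
Multiplying through by $\kappa^2/\epsilon$ and by $\kappa/\epsilon$ respectively, and using $\epsilon\le\kappa^2$ (which gives $1/\kappa\le\kappa/\epsilon$ and $\kappa^2/\epsilon\ge 1$) from Assumption 2.1, the $\kappa^2\|\Vector{E}\|_{0,G}^2$ and $\kappa\|\Vector{E}_T\|_{0,\partial G}^2$ pieces of the target estimate follow.

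Next I would take the real part of the tested identity to recover the curl-norm:
\begin{equation*}
\|\nabla\times\Vector{E}\|_{0,G}^2 = \kappa^2\|\Vector{E}\|_{0,G}^2 + \operatorname{Re}(\Vector{J},\Vector{E})_G + \operatorname{Re}\langle\Vector{g},\Vector{E}_T\rangle_{\partial G}.
\end{equation*}
The first term is already controlled by the previous step, and the two cross terms are bounded by the same Young's inequality splits used above; every resulting piece is dominated by $\tfrac{\kappa^2}{\epsilon^2}\|\Vector{J}\|_{0,G}^2+\tfrac{\kappa}{\epsilon}\|\Vector{g}\|_{0,\partial G}^2$ after using $\epsilon\le\kappa^2$. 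Adding the three bounds gives \eqref{eq:maxwellGerStable}.

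There is no real obstacle here; the whole argument is the Maxwell analogue of Gander--Graham--Spence's absorption trick for Helmholtz, and the heavy lifting is just keeping track of how the Young's weights $\tfrac{1}{\epsilon}$ versus $\tfrac{1}{\kappa}$ interact with the prefactors $\tfrac{\kappa^2}{\epsilon^2}$ and $\tfrac{\kappa}{\epsilon}$ claimed on the right-hand side. The only mildly delicate point is to check that the scaling $\epsilon\le\kappa^2$ is enough to upgrade the weaker bounds $\tfrac{1}{\epsilon}\|\Vector{J}\|_{0,G}^2$ and $\tfrac{1}{\kappa}\|\Vector{g}\|_{0,\partial G}^2$ to the claimed $\tfrac{\kappa^2}{\epsilon^2}\|\Vector{J}\|_{0,G}^2$ and $\tfrac{\kappa}{\epsilon}\|\Vector{g}\|_{0,\partial G}^2$, but this is immediate from $\kappa^2/\epsilon \ge 1$. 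No use of any discrete inf-sup, regularity, or duality is needed; the result is purely a consequence of the positive absorption giving a usable imaginary part on the diagonal of $a_\epsilon$.
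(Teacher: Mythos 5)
Your proposal is correct and follows essentially the same route as the paper: test with $\Vector{\xi}=\Vector{E}$, use the imaginary part plus Young's inequality to control $\epsilon\Vert\Vector{E}\Vert_{0,G}^2+\kappa\Vert\Vector{E}_T\Vert_{0,\partial G}^2$, then the real part to recover the curl norm, and finally upgrade the weights via $\epsilon\lesssim\kappa^2$. The only difference is cosmetic (your Young weights in the real-part step differ slightly from the paper's, which uses $\tfrac{1}{2\kappa^2}\Vert\Vector{J}\Vert^2+\tfrac{\kappa^2}{2}\Vert\Vector{E}\Vert^2$), so there is nothing further to add.
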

\begin{proof}
    Taking $\Vector{\xi}=\Vector{E}$ in \eqref{eq:shiftMaxwellVF}, we have
    \begin{equation}\label{eq:proofGerStable1}
        (\kappa^2+\mathrm{i}\epsilon)\Vert{\Vector{E}}\Vert_{0,{G}}^2 - \Vert{\nabla\times\Vector{E}}\Vert_{0,{G}}^2 + \mathrm{i}\kappa\Vert{\Vector{E}_T}\Vert_{0,\partial{G}}^2 = -\int_{G} \Vector{J}\cdot\overline{\Vector{E}}\Diff V - \int_{\partial{G}}\Vector{g}\cdot\overline{\Vector{E}}_T\Diff S.
    \end{equation}
    Taking the imaginary part of \eqref{eq:proofGerStable1} and using Young's inequality give
    \begin{align*}
         \quad\epsilon \Vert{\Vector{E}}\Vert_{0,{G}}^2 + \kappa\Vert{\Vector{E}_T}\Vert_{0,\partial{G}}^2
         & \leq \Vert{\Vector{J}}\Vert_{0,{G}} \Vert{\Vector{E}}\Vert_{0,{G}} + \Vert{\Vector{g}}\Vert_{0,\partial{G}} \Vert{\Vector{E}_T}\Vert_{0,\partial{G}} \\
         & \leq \frac{1}{2\epsilon}\Vert{\Vector{J}}\Vert_{0,{G}}^2 + \frac{\epsilon}{2}\Vert{\Vector{E}}\Vert_{0,{G}}^2 + \frac{1}{2\kappa}\Vert{\Vector{g}}\Vert_{0,\partial{G}}^2 + \frac{\kappa}{2}\Vert{\Vector{E}_T}\Vert_{0,\partial{G}}^2,
    \end{align*}
    namely
    \begin{equation}\label{eq:proofGerStable2}
        \epsilon \Vert{\Vector{E}}\Vert_{0,{G}}^2 + \kappa\Vert{\Vector{E}_T}\Vert_{0,\partial{G}}^2 \leq \frac{1}{\epsilon}\Vert{\Vector{J}}\Vert_{0,{G}}^2 + \frac{1}{\kappa}\Vert{\Vector{g}}\Vert_{0,\partial{G}}^2.
    \end{equation}
    Then taking the real part and using Young's inequality again, we can obtain
    \begin{align*}
        \Vert{\nabla\times\Vector{E}}\Vert_{0,{G}}^2
         & \leq \kappa^2 \Vert{\Vector{E}}\Vert_{0,{G}}^2 + \Vert{\Vector{J}}\Vert_{0,{G}} \Vert{\Vector{E}}\Vert_{0,{G}} + \Vert{\Vector{g}}\Vert_{0,\partial{G}} \Vert{\Vector{E}_T}\Vert_{0,\partial{G}} \\
         & \leq \frac{3\kappa^2}{2}\Vert{\Vector{E}}\Vert_{0,{G}}^2 + \frac{\kappa}{2}\Vert{\Vector{E}_T}\Vert_{0,\partial{G}}^2 + \frac{1}{2\kappa^2}\Vert{\Vector{J}}\Vert_{0,{G}}^2  + \frac{1}{2\kappa}\Vert{\Vector{g}}\Vert_{0,\partial{G}}^2.
    \end{align*}
    Combining it with \eqref{eq:proofGerStable2} derives that
    \begin{align*}
         & \quad\Vert{\nabla\times\Vector{E}}\Vert_{0,{G}}^2 + \kappa^2\Vert{\Vector{E}}\Vert_{0,{G}}^2 + \kappa\Vert{\Vector{E}_T}\Vert_{0,\partial{G}}^2 \\
         & \leq \frac{5\kappa^2}{2}\Vert{\Vector{E}}\Vert_{0,{G}}^2 + \frac{3\kappa}{2}\Vert{\Vector{E}_T}\Vert_{0,\partial{G}}^2 + \frac{1}{2\kappa^2}\Vert{\Vector{J}}\Vert_{0,{G}}^2  + \frac{1}{2\kappa}\Vert{\Vector{g}}\Vert_{0,\partial{G}}^2 \\
         & \leq \left(\frac{5\kappa^2}{2\epsilon^2}+\frac{3}{2\epsilon}+\frac{1}{2\kappa^2}\right)\Vert{\Vector{J}}\Vert_{0,{G}}^2 + \left(\frac{5\kappa}{2\epsilon}+\frac{2}{\kappa}\right)\Vert{\Vector{g}}\Vert_{0,\partial{G}}^2 \\
         & \lesssim \frac{\kappa^2}{\epsilon^2}\left(1+\frac{\epsilon}{\kappa^2}+\left(\frac{\epsilon}{\kappa^2}\right)^2\right)\Vert{\Vector{J}}\Vert_{0,{G}}^2 + \frac{\kappa}{\epsilon}\left(1+\frac{\epsilon}{\kappa^2}\right)\Vert{\Vector{g}}\Vert_{0,\partial{G}}^2 \\
         & \lesssim \frac{\kappa^2}{\epsilon^2}\Vert{\Vector{J}}\Vert_{0,{G}}^2 + \frac{\kappa}{\epsilon}\Vert{\Vector{g}}\Vert_{0,\partial{G}}^2.
    \end{align*}
\end{proof}
The stability constant in Theorem \ref{thm:maxwellGerStable} will go to infinity as $\epsilon\rightarrow 0$. Following the approach in \cite[Theorem 3.1]{hiptmair2011stability}, we could obtain another stability result when $\epsilon/\kappa^2$ is small.
\begin{theorem}\label{thm:maxwellLitepStable}
    Let ${G}\subset \mathbb{R}^3$ be a bounded $C^2$-domain with characteristic length $L$, which is star-shaped with respect to a ball $B_\gamma(\Vector{x}_0)$ of radius $\gamma\sim L$, and assume that
    $\nabla\cdot\Vector{J}=0$. Then there exists a constant $C$ such that when $\frac{\epsilon L}{\kappa}\leq C$, the solution to \eqref{eq:shiftMaxwellVF}, denoted by $\Vector{E}$, satisfies
    \begin{equation}\label{eq:LitepStableDomain}
        \Vert{\nabla\times\Vector{E}}\Vert_{0,{G}}^2+\kappa^2\Vert{\Vector{E}}\Vert_{0,{G}}^2\lesssim L^2\Vert{\Vector{J}}\Vert_{0,{G}}^2 + L\Vert{\Vector{g}}\Vert_{0,\partial{G}}^2
    \end{equation}
    and
    \begin{equation}\label{eq:LitepStableBoundary}
        \kappa^2\Vert{\Vector{E}_T}\Vert_{0,\partial{G}}^2\lesssim L\Vert{\Vector{J}}\Vert_{0,{G}}^2 + \Vert{\Vector{g}}\Vert_{0,\partial{G}}^2.
    \end{equation}
\end{theorem}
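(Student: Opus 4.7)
The plan is to adapt the Morawetz--Rellich multiplier technique of \cite[Theorem 3.1]{hiptmair2011stability} to accommodate the absorption term $\mathrm{i}\epsilon\Vector{E}$, treating it as a controlled perturbation whenever $\epsilon L/\kappa$ is small. As a preliminary reduction, I would observe that $\nabla\cdot\Vector{J}=0$, combined with taking the divergence of $\nabla\times(\nabla\times\Vector{E})-(\kappa^2+\mathrm{i}\epsilon)\Vector{E}=\Vector{J}$, yields $(\kappa^2+\mathrm{i}\epsilon)\nabla\cdot\Vector{E}=0$, hence $\nabla\cdot\Vector{E}=0$ in $G$. Together with the $C^2$ regularity of $\partial G$ and the impedance boundary condition this places $\Vector{E}\in H^1(G)^3$ with $\Vert\nabla\Vector{E}\Vert_{0,G}\lesssim\Vert\nabla\times\Vector{E}\Vert_{0,G}$, which justifies the subsequent integration by parts and lets us identify the principal part with a vector Laplacian.

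Next I would test the strong form of \eqref{eq:shiftMaxwell} with the Morawetz-type multiplier built from $\Vector{m}(\Vector{x})=\Vector{x}-\Vector{x}_0$, namely a suitable weighted combination of $(\Vector{m}\cdot\nabla)\overline{\Vector{E}}$ and $\overline{\Vector{E}}$ exactly as in \cite{hiptmair2011stability}, and take the real part after integrating over $G$. Using $\nabla\times\Vector{m}=\Vector{0}$ and $(\Vector{v}\cdot\nabla)\Vector{m}=\Vector{v}$, repeated integration by parts yields a Rellich-type identity of the schematic form
\[
\Vert\nabla\times\Vector{E}\Vert_{0,G}^2+\kappa^2\Vert\Vector{E}\Vert_{0,G}^2 \;\lesssim\; \mathcal{B}(\Vector{E})+\mathcal{D}(\Vector{J},\Vector{g})+\epsilon\,\mathcal{R}(\Vector{E}).
\]
The star-shaped hypothesis $\Vector{m}\cdot\Vector{n}\gtrsim L$ on $\partial G$ controls the boundary functional $\mathcal{B}$ after substituting $(\nabla\times\Vector{E})\times\Vector{n}=\mathrm{i}\kappa\Vector{E}_T+\Vector{g}$, producing contributions of the form $L\Vert\Vector{g}\Vert_{0,\partial G}^2+\kappa L\Vert\Vector{E}_T\Vert_{0,\partial G}^2$, while the source functional $\mathcal{D}$ is bounded by $L^2\Vert\Vector{J}\Vert_{0,G}^2$ plus a fraction of the left-hand side via Young's inequality. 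The absorption remainder $\mathcal{R}(\Vector{E})$ scales like $L\Vert\Vector{E}\Vert_{0,G}\Vert\nabla\Vector{E}\Vert_{0,G}$, so invoking $\Vert\nabla\Vector{E}\Vert_{0,G}\lesssim\Vert\nabla\times\Vector{E}\Vert_{0,G}$ and a weighted Young's inequality gives $\epsilon\,\mathcal{R}\lesssim(\epsilon L/\kappa)\bigl(\Vert\nabla\times\Vector{E}\Vert_{0,G}^2+\kappa^2\Vert\Vector{E}\Vert_{0,G}^2\bigr)$, and choosing $C$ small enough in $\epsilon L/\kappa\le C$ absorbs this into the left-hand side.

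The residual boundary contribution $\kappa L\Vert\Vector{E}_T\Vert_{0,\partial G}^2$ is then handled by testing \eqref{eq:shiftMaxwellVF} with $\Vector{\xi}=\Vector{E}$ and taking the imaginary part, exactly as in the proof of Theorem \ref{thm:maxwellGerStable}: this gives $\kappa\Vert\Vector{E}_T\Vert_{0,\partial G}^2\lesssim\Vert\Vector{J}\Vert_{0,G}\Vert\Vector{E}\Vert_{0,G}+\kappa^{-1}\Vert\Vector{g}\Vert_{0,\partial G}^2$, and multiplying by $L$ and applying Young's inequality on the cross term (balanced against $\kappa^2\Vert\Vector{E}\Vert_{0,G}^2$) closes the estimate and yields \eqref{eq:LitepStableDomain}. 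Finally, \eqref{eq:LitepStableBoundary} follows by reinserting \eqref{eq:LitepStableDomain} into the imaginary-part inequality multiplied by $\kappa$ and once more applying Young's inequality to the resulting $\kappa\Vert\Vector{J}\Vert_{0,G}\Vert\Vector{E}\Vert_{0,G}$ term. The main obstacle is the delicate bookkeeping of the absorption remainder $\epsilon\mathcal{R}$: I must verify that it genuinely scales as $\epsilon L/\kappa$ times the coercive left-hand side, and not merely as $\epsilon$ times it, since the latter would be too weak in the regime $\epsilon\ll\kappa^2$ that is the whole point of the theorem; a secondary technical point is to track that all implicit constants in the Morawetz identity remain independent of both $L$ and $\kappa$ through a careful scaling argument.
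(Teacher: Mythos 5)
Your overall strategy is the same as the paper's: a Rellich--Morawetz multiplier identity on the star-shaped domain, absorption of the $\epsilon$-term into the coercive left-hand side when $\epsilon L/\kappa$ is small, and the imaginary part of the basic energy identity to control $\kappa\Vert\Vector{E}_T\Vert_{0,\partial G}^2$. The final bookkeeping (the remainder scaling as $\epsilon L/\kappa$ times the left-hand side via a weighted Young inequality, and the closing of \eqref{eq:LitepStableBoundary} by reinserting \eqref{eq:LitepStableDomain}) also matches what the paper does, with the choice $\eta_3=\epsilon L/\kappa$ playing exactly the role you anticipate.

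However, there is a genuine gap in your choice of multiplier. You test with $(\Vector{m}\cdot\nabla)\overline{\Vector{E}}$ plus a multiple of $\overline{\Vector{E}}$, so your absorption remainder is $\epsilon\,\mathcal{R}(\Vector{E})\lesssim \epsilon L\Vert\Vector{E}\Vert_{0,G}\Vert\nabla\Vector{E}\Vert_{0,G}$, i.e.\ it involves the \emph{full gradient}, and you close it only through the claimed bound $\Vert\nabla\Vector{E}\Vert_{0,G}\lesssim\Vert\nabla\times\Vector{E}\Vert_{0,G}$. That bound is not justified here: divergence-free fields are not curl-dominated without boundary control (gradients of harmonic functions are a counterexample on any domain), and the standard identity relating $\Vert\nabla\Vector{E}\Vert_{0,G}^2$ to $\Vert\nabla\times\Vector{E}\Vert_{0,G}^2+\Vert\nabla\cdot\Vector{E}\Vert_{0,G}^2$ carries boundary curvature terms of size $L^{-1}\Vert\Vector{E}\Vert_{0,\partial G}^2$ involving the \emph{normal} component $\Vector{E}\cdot\Vector{n}$, which the impedance condition does not control; moreover any such constant would have to be tracked uniformly in $\kappa$ and $L$, which is exactly where the argument is most delicate. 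The paper avoids this entirely by taking $\Vector{\xi}=(\nabla\times\Vector{E})\times\Vector{x}$ as the test function (following \cite[Theorem 3.1]{hiptmair2011stability}), so that the absorption term becomes $\epsilon\,\mathrm{Im}\int_G\Vector{E}\cdot((\nabla\times\overline{\Vector{E}})\times\Vector{x})\,\Diff V\leq \epsilon L\Vert\Vector{E}\Vert_{0,G}\Vert\nabla\times\Vector{E}\Vert_{0,G}$, involving only the curl, and then Young's inequality with $\eta_3=\epsilon L/\kappa$ closes the estimate without any $H^1$ regularity bound. You should either switch to this multiplier or supply a genuine, $\kappa$- and $L$-uniform proof of the gradient bound; as written, the step that absorbs $\epsilon\mathcal{R}$ does not go through.
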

\begin{proof}
    We will only focus on the case when $\Vector{g}\in H_T^{1/2}(\partial{G})$ and $\Vector{E}\in H^1(\mathbf{curl};{G})$ (see \cite[Lemma 3.2]{hiptmair2011stability}). Without loss of generality, we could set $\Vector{x}_0=0$. Ignoring the term $\Vert{\Vector{E}}\Vert_{0,{G}}$ in the imaginary part of \eqref{eq:proofGerStable1} and using Young's inequality give
    \begin{equation*}
        \begin{aligned}
            \kappa\Vert{\Vector{E}_T}\Vert_{0,\partial{G}}^2
             & \leq \Vert{\Vector{J}}\Vert_{0,{G}}\Vert{\Vector{E}}\Vert_{0,{G}} + \Vert{\Vector{g}}\Vert_{0,\partial{G}}\Vert{\Vector{E}}\Vert_{0,\partial{G}} \\
             & \leq \frac{1}{2\eta_1}\Vert{\Vector{J}}\Vert_{0,{G}}^2+\frac{\eta_1}{2}\Vert{\Vector{E}}\Vert_{0,{G}}^2+\frac{1}{2\kappa}\Vert{\Vector{g}}\Vert_{0,\partial{G}}^2 +\frac{\kappa}{2}\Vert{\Vector{E}_T}\Vert_{0,\partial{G}}^2,
        \end{aligned}
    \end{equation*}
    namely
    \begin{equation}\label{eq:proofLitepStableET}
        \kappa^2\Vert{\Vector{E}_T}\Vert_{0,\partial{G}}^2 \leq \Vert{\Vector{g}}\Vert_{0,\partial{G}}^2 + \frac{\kappa}{\eta_1}\Vert{\Vector{J}}\Vert_{0,{G}}^2 + \kappa\eta_1\Vert{\Vector{E}}\Vert_{0,{G}}^2.
    \end{equation}
    Then, taking $\xi=(\nabla\times {\Vector{E}})\times\Vector{x}$ in \eqref{eq:shiftMaxwellVF} and following the same procedure in the proof of Theorem 3.1 of \cite{hiptmair2011stability} will finally lead to
    \begin{align*}
         & \quad\frac{1}{2}\Vert{\nabla\times\Vector{E}}\Vert_{0,{G}}^2 + \frac{\kappa^2}{2}\Vert{\Vector{E}}\Vert_{0,{G}}^2 - \frac{1}{2}\int_{\partial{G}}\frac{\vert{\Vector{x}}\vert^2}{\Vector{x}\cdot\Vector{n}}\left(\kappa^2\vert{\Vector{E}_T}\vert^2+\vert{(\nabla\times\Vector{E})_T}\vert^2\right)\Diff S \\
         & \leq \mathop{\rm Re}\left(\int_{{G}}\Vector{J}\cdot((\nabla\times\overline{\Vector{E}})\times\Vector{x})\Diff V\right) - \epsilon\mathop{\rm Im}\left(\int_{{G}}\Vector{E}\cdot((\nabla\times\overline{\Vector{E}})\times\Vector{x})\Diff V\right).
    \end{align*}
    Therefore, using the fact that $(\nabla\times\Vector{E})\times\Vector{n} = \Vector{g}+\mathrm{i}\kappa\Vector{E}_T$, the inequality \eqref{eq:proofLitepStableET} and Young's inequality, there holds
    \begin{align}\label{eq:proofLitepStableCurl}
         & \quad\Vert{\nabla\times\Vector{E}}\Vert_{0,{G}}^2 + \kappa^2\Vert{\Vector{E}}\Vert_{0,{G}}^2 \cr
         & \leq 2L\Vert{\Vector{J}}\Vert_{0,{G}}\Vert{\nabla\times\Vector{E}}\Vert_{0,{G}} +  2\epsilon L\Vert{\Vector{E}}\Vert_{0,{G}}\Vert{\nabla\times\Vector{E}}\Vert_{0,{G}} \cr
         & +\frac{L^2}{\lambda}\left(\kappa^2\Vert{\Vector{E}_T}\Vert_{0,\partial{G}}^2+\Vert{(\nabla\times\Vector{E})_T}\Vert_{0,\partial{G}}^2\right) \cr
         & \leq \frac{L^2}{\eta_2} \Vert{\Vector{J}}\Vert_{0,{G}}^2 + \eta_2\Vert{\nabla\times\Vector{E}}\Vert_{0,{G}}^2 + \frac{\epsilon^2L^2}{\eta_3}\Vert{\Vector{E}}\Vert_{0,{G}}^2 + \eta_3\Vert{\nabla\times\Vector{E}}\Vert_{0,{G}}^2 \cr
         & +\frac{3\kappa^2 L^2}{\lambda}\Vert{\Vector{E}_T}\Vert_{0,\partial{G}}^2 + \frac{2 L^2}{\lambda} \Vert{\Vector{g}}\Vert_{0,\partial{G}}^2 \cr
         & \leq \left(\frac{L^2}{\eta_2}+\frac{3\kappa L^2}{\lambda\eta_1}\right)\Vert{\Vector{J}}\Vert_{0,{G}}^2 + \frac{5 L^2}{\lambda} \Vert{\Vector{g}}\Vert_{0,\partial{G}}^2 + (\eta_2+\eta_3)\Vert{\nabla\times\Vector{E}}\Vert_{0,{G}}^2 \cr
         & + \left(\frac{\epsilon^2 L^2}{\kappa^2\eta_3}+\frac{3 L^2\eta_1}{\kappa\lambda }\right)\kappa^2\Vert{\Vector{E}}\Vert_{0,{G}}^2.
    \end{align}
    Taking $\eta_1 = \frac{\kappa\lambda}{6L^2}$, $\eta_2 = \frac{1}{2}$ and $\eta_3 = \frac{\epsilon L}{\kappa}$ in \eqref{eq:proofLitepStableCurl}, we derive
    \begin{equation*}
        \left(\frac{1}{2}-\frac{\epsilon L}{\kappa}\right)\left(\Vert{\nabla\times\Vector{E}}\Vert_{0,{G}}^2+\kappa^2\Vert{\Vector{E}}\Vert_{0,{G}}^2\right)\leq \left(2L^2+\frac{18L^4}{\lambda^2} \right)\Vert{\Vector{J}}\Vert_{0,{G}}^2 + \frac{5L^2}{\lambda}\Vert{\Vector{g}}\Vert_{0,\partial{G}}^2.
    \end{equation*}
    Notice that $\gamma\sim L$, then when $\frac{\epsilon L}{\kappa}\leq\frac{1}{2}$ there holds
    \begin{equation*}
        \Vert{\nabla\times\Vector{E}}\Vert_{0,{G}}^2+\kappa^2\Vert{\Vector{E}}\Vert_{0,{G}}^2\lesssim L^2\Vert{\Vector{J}}\Vert_{0,{G}}^2 + L\Vert{\Vector{g}}\Vert_{0,\partial{G}}^2.
    \end{equation*}
    Moreover, taking $\eta_1=\frac{\kappa}{L}$ in \eqref{eq:proofLitepStableET} gives
    \begin{equation*}
        \kappa^2\Vert{\Vector{E}_T}\Vert_{0,\partial{G}}^2\lesssim L\Vert{\Vector{J}}\Vert_{0,{G}}^2 + \Vert{\Vector{g}}\Vert_{0,\partial{G}}^2.
    \end{equation*}
\end{proof}

In Theorem \ref{thm:maxwellLitepStable} we have assumed that ${G}$ is $C^2$-smooth and $\nabla\cdot\Vector{J} = 0$. In fact, these two requirements can be relaxed.
\begin{corollary}\label{coro:maxwellLitepStablePoly}
    Let ${G}\subset\mathbb{R}^3$ be a bounded polyhedron with characteristic lentgh $L$, which is star-shaped with respect to the ball $B_\gamma(\Vector{x}_0)$ of radius $\gamma\sim L$.
    Then for general $\Vector{J}\in L^2({G})^3$ and $\Vector{g}\in L^2_T(\partial{G})$, there exists a constant $C$ such that when $\epsilon L/\kappa\leq C$, the solution to \eqref{eq:shiftMaxwellVF}, denoted by $\Vector{E}$, satisfies
    \begin{equation}\label{eq:LitepStablePolyDomain}
        \Vert{\nabla\times\Vector{E}}\Vert_{0,{G}}^2+\kappa^2\Vert{\Vector{E}}\Vert_{0,{G}}^2\lesssim (\kappa^{-2}+L^2)\Vert{\Vector{J}}\Vert_{0,{G}}^2 + L\Vert{\Vector{g}}\Vert_{0,\partial{G}}^2
    \end{equation}
    and
    \begin{equation}\label{eq:LitepStablePolyBoundary}
        \kappa^2\Vert{\Vector{E}_T}\Vert_{0,\partial{G}}^2\lesssim L\Vert{\Vector{J}}\Vert_{0,{G}}^2 + \Vert{\Vector{g}}\Vert_{0,\partial{G}}^2.
    \end{equation}
\end{corollary}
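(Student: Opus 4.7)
The plan is to deduce Corollary \ref{coro:maxwellLitepStablePoly} from Theorem \ref{thm:maxwellLitepStable} by two successive reductions: dropping the $C^2$-regularity of $\partial G$, and dropping the divergence-free hypothesis on $\mathbf{J}$. For the first reduction, I would observe that the Rellich--Morawetz multiplier argument underlying Theorem \ref{thm:maxwellLitepStable} involves the boundary only through the inequality $\mathbf{x}\cdot\mathbf{n}\geq\gamma>0$ a.e., which is automatic for a Lipschitz polyhedron star-shaped with respect to $B_\gamma(\mathbf{x}_0)$. The $C^2$ hypothesis enters only through the regularity $\mathbf{E}\in H^1(\mathbf{curl};G)$ used in the integration by parts; this is restored by approximating $G$ from inside by a family $\{G_n\}$ of $C^2$-smooth domains uniformly star-shaped with respect to $B_\gamma(\mathbf{x}_0)$ with $L_n\sim L$, applying Theorem \ref{thm:maxwellLitepStable} on each $G_n$, and passing to the limit; the star-shaped constants remain uniform by construction.

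For the second reduction, I would Helmholtz-decompose $\mathbf{J}=\mathbf{J}_0+\nabla\varphi$, where $\varphi\in H^1(G)/\mathbb{R}$ solves the Neumann problem $-\Delta\varphi=-\nabla\cdot\mathbf{J}$ with $\partial_n\varphi=\mathbf{J}\cdot\mathbf{n}$, so that $\nabla\cdot\mathbf{J}_0=0$ in $G$, $\mathbf{J}_0\cdot\mathbf{n}=0$ on $\partial G$, and $\|\mathbf{J}_0\|_{0,G}+\|\nabla\varphi\|_{0,G}\lesssim\|\mathbf{J}\|_{0,G}$. Correspondingly split $\mathbf{E}=\mathbf{E}_0+\mathbf{E}_1$ with the explicit, curl-free field $\mathbf{E}_1:=-(\kappa^2+\mathrm{i}\epsilon)^{-1}\nabla\varphi$ absorbing the gradient source. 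Then $\mathbf{E}_0$ solves the impedance problem with divergence-free volume source $\mathbf{J}_0$ and an updated boundary datum, so the (now polyhedron-valid) Theorem \ref{thm:maxwellLitepStable} controls $\mathbf{E}_0$. Combined with the pointwise bound $\kappa^2\|\mathbf{E}_1\|_{0,G}^2\leq\kappa^{-2}\|\nabla\varphi\|_{0,G}^2\lesssim\kappa^{-2}\|\mathbf{J}\|_{0,G}^2$ (and $\nabla\times\mathbf{E}_1=0$), this produces the additional $\kappa^{-2}\|\mathbf{J}\|_{0,G}^2$ term appearing in \eqref{eq:LitepStablePolyDomain}. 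Estimate \eqref{eq:LitepStablePolyBoundary} is recovered directly from \eqref{eq:proofLitepStableET}, which used neither $C^2$-regularity nor divergence-freeness.

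The principal obstacle is the tangential trace $(\nabla\varphi)_T$ appearing in the boundary datum for the $\mathbf{E}_0$-problem: on a Lipschitz polyhedron the Neumann potential $\varphi$ generically lies in $H^{3/2-s}$ only, so $(\nabla\varphi)_T$ cannot be bounded by $\|\mathbf{J}\|_{0,G}$ via standard trace estimates. I would bypass this by re-executing the Rellich--Morawetz identity directly for general $\mathbf{J}$, exploiting that the only new term has the form $(\kappa^2+\mathrm{i}\epsilon)^{-1}\int_G(\nabla\cdot\mathbf{J})\Phi\,\mathrm{d}V$ for a specific multiplier-generated function $\Phi$; a single integration by parts converts this into $\int_G\mathbf{J}\cdot\nabla\Phi\,\mathrm{d}V$, bounded by $\kappa^{-2}\|\mathbf{J}\|_{0,G}\|\nabla\Phi\|_{0,G}$, which reproduces precisely the new $\kappa^{-2}\|\mathbf{J}\|_{0,G}^2$ term in \eqref{eq:LitepStablePolyDomain} without ever invoking boundary traces of $\nabla\varphi$.
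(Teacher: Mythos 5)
Your overall strategy is the same as the paper's (subtract an explicit curl-free field $(\kappa^2+\mathrm{i}\epsilon)^{-1}\nabla\phi$ to reduce to a divergence-free source, which is exactly where the extra $\kappa^{-2}\Vert\Vector{J}\Vert_{0,G}^2$ term comes from; handle the polyhedral boundary by appealing to the smooth-domain result as in \cite[Theorem 3.2]{hiptmair2011stability}), but you make one choice that creates a genuine gap. You define the scalar potential $\varphi$ by a \emph{Neumann} problem, which forces the corrected field $\Vector{E}_0$ to satisfy an impedance condition with the modified datum $\Vector{g}-\mathrm{i}\kappa(\kappa^2+\mathrm{i}\epsilon)^{-1}(\nabla\varphi)_T$; as you yourself observe, $(\nabla\varphi)_T$ is not controlled in $L^2(\partial G)$ by $\Vert\Vector{J}\Vert_{0,G}$ on a Lipschitz polyhedron. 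The paper avoids this entirely by solving the homogeneous \emph{Dirichlet} problem $-\Delta\phi=\nabla\cdot\Vector{J}$, $\phi\in H_0^1(G)$: since $\phi$ vanishes on $\partial G$, its tangential gradient vanishes there too, so $\widetilde{\Vector{E}}=\Vector{E}-(\kappa^2+\mathrm{i}\epsilon)^{-1}\nabla\phi$ satisfies the impedance condition with the \emph{same} $\Vector{g}$, and Theorem \ref{thm:maxwellLitepStable} applies verbatim with source $\Vector{J}+\nabla\phi$ (divergence-free, with $\Vert\Vector{J}+\nabla\phi\Vert_{0,G}\lesssim\Vert\Vector{J}\Vert_{0,G}$ by Lax--Milgram). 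The normal-trace property $\Vector{J}_0\cdot\Vector{n}=0$ that the Neumann choice buys you is not needed anywhere in the argument, so nothing is lost by switching.

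Your proposed repair --- re-executing the Rellich--Morawetz identity for general $\Vector{J}$ and asserting that the only new contribution is $(\kappa^2+\mathrm{i}\epsilon)^{-1}\int_G(\nabla\cdot\Vector{J})\,\Phi\,\mathrm{d}V$ for some multiplier-generated $\Phi$ with $\Vert\nabla\Phi\Vert_{0,G}\lesssim\Vert\Vector{J}\Vert_{0,G}$ --- is not substantiated. The multiplier is $\Vector{\xi}=(\nabla\times\overline{\Vector{E}})\times\Vector{x}$, and when $\nabla\cdot\Vector{E}=-(\kappa^2+\mathrm{i}\epsilon)^{-1}\nabla\cdot\Vector{J}\neq 0$ the identity produces terms coupling $\nabla\cdot\Vector{E}$ with first derivatives of $\Vector{E}$ itself; these are not of the advertised form, $\nabla\cdot\Vector{J}$ is only in $H^{-1}(G)$, and there is no a priori bound on the candidate $\Vert\nabla\Phi\Vert_{0,G}$. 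The fix is simply to adopt the Dirichlet lift, after which no modification of the multiplier identity is required. Your first reduction (interior approximation by uniformly star-shaped $C^2$ domains) is consistent with how the cited reference extends the smooth-domain result to polyhedra, and your derivation of \eqref{eq:LitepStablePolyBoundary} directly from \eqref{eq:proofLitepStableET} is correct.
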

\begin{proof}
    At first we assume that ${G}$ is $C^2$-smooth but $\nabla\cdot\Vector{J}\neq 0$. Define $\phi\in H_0^1({G})$ as the solution to the following equation
    \begin{equation*}
        -\Delta \phi = \nabla\cdot\Vector{J}.
    \end{equation*}
    It is obvious that $\nabla\cdot(\nabla\phi) = \Delta\phi = -\nabla\cdot\Vector{J}\in H^{-1}({G})$ and
    \begin{align*}
        \Vert{\nabla\cdot\Vector{J}}\Vert_{-1,{G}}
        = \sup_{w\in H_1^0({G})} \frac{|(\nabla\cdot\Vector{J},w)|}{\Vert{w}\Vert_{1,{G}}}
        = \sup_{w\in H_1^0({G})} \frac{|(\Vector{J},\nabla w)|}{\Vert{w}\Vert_{1,{G}}} \leq \Vert{\Vector{J}}\Vert_{0,{G}}.
    \end{align*}
    From Lax-Milgram theorem, we have
    \begin{equation*}
        \vert{\phi}\vert_{1,{G}}\lesssim \Vert{\nabla\cdot\Vector{J}}\Vert_{-1,{G}} \leq \Vert{\Vector{J}}\Vert_{0,{G}}.
    \end{equation*}
    Define $\widetilde{\Vector{E}}\coloneqq\Vector{E}-(\kappa^2+\mathrm{i}\epsilon)^{-1}\nabla\phi$, then $\widetilde{\Vector{E}}$ satisfies the following equation:
    \begin{equation*}
        \left\lbrace
        \begin{aligned}
             & \nabla\times(\nabla\times\widetilde{\Vector{E}}) -(\kappa^2+\mathrm{i}\epsilon)\widetilde{\Vector{E}}=\Vector{J} + \nabla\phi & & \text{in }{G}, \\
             & (\nabla\times\widetilde{\Vector{E}})\times\Vector{n}-\mathrm{i}\kappa\widetilde{\Vector{E}}_T=\Vector{g} & & \text{on }\partial{G},
        \end{aligned}
        \right.
    \end{equation*}
    where $\nabla\cdot(\Vector{J}+\nabla\phi) = 0$. Therefore \eqref{eq:LitepStableDomain} and \eqref{eq:LitepStableBoundary} hold for $\widetilde{\Vector{E}}$ from Theorem \ref{thm:maxwellLitepStable}. Then we can obtain
    \begin{equation*}
        \Vert{\nabla\times\Vector{E}}\Vert_{0,{G}}^2+\kappa^2\Vert{\Vector{E}}\Vert_{0,{G}}^2\lesssim (\kappa^{-2}+L^2)\Vert{\Vector{J}}\Vert_{0,{G}}^2 + L\Vert{\Vector{g}}\Vert_{0,\partial{G}}^2
    \end{equation*}
    and
    \begin{equation*}
        \kappa^2\Vert{\Vector{E}_T}\Vert_{0,\partial{G}}^2\lesssim L\Vert{\Vector{J}}\Vert_{0,{G}}^2 + \Vert{\Vector{g}}\Vert_{0,\partial{G}}^2.
    \end{equation*}

    If ${G}$ is a general polyhedron, the proof follows from \cite[Theorem 3.2]{hiptmair2011stability}, together with the above arguments.
\end{proof}

Combining Theorem \ref{thm:maxwellGerStable} and Corollary \ref{coro:maxwellLitepStablePoly}, we could obtain the stability result for any $\epsilon>0$.
\begin{theorem}\label{thm:maxwellStable}
    Let the assumptions made in Theorem \ref{thm:maxwellGerStable} and Corollary \ref{coro:maxwellLitepStablePoly} be satisfied, then the solution to \eqref{eq:shiftMaxwellVF}, denoted by $\Vector{E}$, satisfies
    \begin{equation}\label{eq:maxwellStable}
        \Vert{\Vector{E}}\Vert_{\imp,\kappa,{G}}^2\lesssim(1+(\kappa L)^{-1}+(\kappa L)^{-2})L^2\Vert{\Vector{J}}\Vert_{0,{G}}^2 + (1+(\kappa L)^{-1})L\Vert{\Vector{g}}\Vert_{0,\partial{G}}^2.
    \end{equation}
\end{theorem}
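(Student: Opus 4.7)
The plan is to obtain the stated bound by a simple case split on the size of the dimensionless parameter $\epsilon L/\kappa$, using Corollary \ref{coro:maxwellLitepStablePoly} in the small-absorption regime and Theorem \ref{thm:maxwellGerStable} in the large-absorption regime, and then verifying that both branches fit under a common envelope.

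First I would fix the threshold constant $C$ supplied by Corollary \ref{coro:maxwellLitepStablePoly}. In the regime $\epsilon L/\kappa\le C$, that corollary delivers \eqref{eq:LitepStablePolyDomain} and \eqref{eq:LitepStablePolyBoundary}. The only work is to reassemble these into the full $\imp$-norm: since $\kappa\|\Vector{E}_T\|_{0,\partial G}^2=\kappa^{-1}\cdot\kappa^2\|\Vector{E}_T\|_{0,\partial G}^2$, \eqref{eq:LitepStablePolyBoundary} contributes $\kappa^{-1}L\|\Vector{J}\|_{0,G}^2+\kappa^{-1}\|\Vector{g}\|_{0,\partial G}^2$, and adding this to \eqref{eq:LitepStablePolyDomain} and factoring out $L^2$ and $L$ respectively yields exactly the right-hand side of \eqref{eq:maxwellStable}.

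In the complementary regime $\epsilon L/\kappa> C$, Theorem \ref{thm:maxwellGerStable} applies (since Assumption 2.1 already gives $\epsilon\le\kappa^2$, which is exactly the standing hypothesis of that theorem). It provides $\|\Vector{E}\|_{\imp,\kappa,G}^2\lesssim (\kappa/\epsilon)^2\|\Vector{J}\|_{0,G}^2+(\kappa/\epsilon)\|\Vector{g}\|_{0,\partial G}^2$. Using $\kappa/\epsilon< L/C$, the first term is $\lesssim L^2\|\Vector{J}\|_{0,G}^2$ and the second is $\lesssim L\|\Vector{g}\|_{0,\partial G}^2$, both of which are dominated by the corresponding terms in \eqref{eq:maxwellStable}. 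Combining the two cases gives the claim uniformly in $\epsilon$.

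I do not expect a genuine obstacle here; the statement is a consolidation theorem and all estimates needed are in place. The only subtlety to check is that the threshold $C$ produced by Corollary \ref{coro:maxwellLitepStablePoly} is compatible with Assumption 2.1 (so that both branches genuinely cover all admissible $\epsilon\in[0,\kappa^2]$), which is immediate since the branches partition $[0,\kappa^2]$ at $\epsilon=C\kappa/L$ and both endpoints are allowed.
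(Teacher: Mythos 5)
Your proposal is correct and follows essentially the same route as the paper: split at the threshold $\epsilon L/\kappa=C$ from Corollary \ref{coro:maxwellLitepStablePoly}, combine \eqref{eq:LitepStablePolyDomain} with $\kappa^{-1}$ times \eqref{eq:LitepStablePolyBoundary} in the small-absorption regime, and insert $\kappa/\epsilon<L/C$ into \eqref{eq:maxwellGerStable} in the large-absorption regime. The bookkeeping of the $(\kappa L)^{-1}$ and $(\kappa L)^{-2}$ factors is exactly as the paper intends.
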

\begin{proof}
    Let $C$ be the constant defined in Corollary \ref{coro:maxwellLitepStablePoly}. When $\epsilon L/\kappa\leq C$, dividing both sides of \eqref{eq:LitepStablePolyBoundary} by $\kappa$
    and adding the result to \eqref{eq:LitepStablePolyDomain}, we can get \eqref{eq:maxwellStable}. On the other hand, when $\epsilon L/\kappa> C$, inserting it into \eqref{eq:maxwellGerStable}, we have
    \begin{equation*}
        \Vert{\nabla\times\Vector{E}}\Vert_{0,{G}}^2 + \kappa^2\Vert{\Vector{E}}\Vert_{0,{G}}^2 + \kappa\Vert{\Vector{E}_T}\Vert_{0,\partial{G}}^2 \lesssim L^2 \Vert{\Vector{J}}\Vert_{0,{G}}^2 + L \Vert{\Vector{g}}\Vert_{0,\partial{G}}^2.
    \end{equation*}
    This leads to \eqref{eq:maxwellStable}.
\end{proof}

Based on Theorem \ref{thm:maxwellStable}, we will use the so-called ``Schatz-type'' argument to establish a stability result of the discrete problem \eqref{eq:discreteMVF} in the rest of this section.
Such argument has also been used to build discrete stability results for Helmholtz equations in \cite{melenk2010convergence,graham2020domain}.

Define $S^*:H_\imp(\Vector{curl};{G}) \rightarrow  H_{\imp}(\Vector{curl};{G})$ such that, for any $\Vector{v}\in H_\imp(\Vector{curl};{G})$, the function $S^*\Vector{v}$ is the solution to the
following adjoint problem:
\begin{equation}\label{eq:S*def}
    a_{\epsilon}(\Vector{\xi},\mathcal{S}^*\Vector{v})=(\kappa^2+\mathrm{i}\epsilon)({\Vector{\xi}},{\Vector{v}})_{0,{G}}+\mathrm{i}\kappa\langle{\Vector{\xi}_T},{\Vector{v}_T}\rangle_{0,\partial{G}},\quad\forall\Vector{\xi}\in H_\imp(\Vector{curl};{G}).
\end{equation}
With the help of the operator $S^*$, we can define a quantity $\eta(V_h({G}))$ by
\begin{equation*}
    \eta(V_h(G))\coloneqq \sup_{0\neq\Vector{v}\in H^1({G})^3}\inf_{\Vector{w}\in V_h(G)}\frac{\Vert{S^*\Vector{v}-\Vector{w}}\Vert_{\imp,\kappa,{G}}}{\Vert{\Vector{v}}\Vert_{1,{G}}}.
\end{equation*}
\begin{theorem}\label{thm:discreteInfSupM}
    Let the assumption made in Theorem \ref{thm:maxwellStable} be satisfied.
    Then there exists a constant $C$ independent of $\kappa,\epsilon,L$ such that when $\kappa h \leq C$ and $\eta(V_h(G))\leq C$, the sesquilinear form $a_\epsilon(\cdot,\cdot)$ admits the following inf-sup condition.
    \begin{align*}
        \quad\inf_{0\neq\Vector{v}_h\in V_h({G})} \sup_{0\neq\Vector{w}_h\in V_h({G})}\frac{\vert{a_{\epsilon}(\Vector{v}_h,\Vector{w}_h)}\vert}{\Vert{\Vector{v}_h}\Vert_{\imp,\kappa,{G}}\Vert{\Vector{w}_h}\Vert_{\imp,\kappa,{G}}}
        \gtrsim\frac{1}{1+\kappa L}.
    \end{align*}
\end{theorem}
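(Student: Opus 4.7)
The plan is to adapt the Schatz-type duality argument of \cite{melenk2010convergence} to the Maxwell setting via the adjoint solution operator $S^*$. Given $\Vector{v}_h \in V_h(G)$, set $\Vector{z} := S^*\Vector{v}_h$; taking $\Vector{\xi}=\Vector{v}_h$ in \eqref{eq:S*def} yields
$$a_\epsilon(\Vector{v}_h,\Vector{z}) = (\kappa^2+\mathrm{i}\epsilon)\|\Vector{v}_h\|_{0,G}^2 + \mathrm{i}\kappa\|\Vector{v}_{h,T}\|_{0,\partial G}^2,$$
which, when combined with the direct evaluation $a_\epsilon(\Vector{v}_h,\Vector{v}_h) = \|\nabla\times\Vector{v}_h\|_{0,G}^2 - (\kappa^2+\mathrm{i}\epsilon)\|\Vector{v}_h\|_{0,G}^2 - \mathrm{i}\kappa\|\Vector{v}_{h,T}\|_{0,\partial G}^2$, gives the clean identity
$$a_\epsilon(\Vector{v}_h,\Vector{v}_h + 2\Vector{z}) = \|\nabla\times\Vector{v}_h\|_{0,G}^2 + (\kappa^2+\mathrm{i}\epsilon)\|\Vector{v}_h\|_{0,G}^2 + \mathrm{i}\kappa\|\Vector{v}_{h,T}\|_{0,\partial G}^2.$$
Since both the real and imaginary parts are non-negative, the modulus of the right-hand side dominates $\tfrac{1}{\sqrt{2}}\|\Vector{v}_h\|_{\imp,\kappa,G}^2$. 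Thus $\Vector{v}_h + 2\Vector{z}$ is the ``ideal'' test function, but it lives in $H_\imp(\Vector{curl};G)$ rather than $V_h(G)$.

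Next I replace $\Vector{z}$ by a near-best approximation $\Vector{z}_h \in V_h(G)$ in the $\|\cdot\|_{\imp,\kappa,G}$ norm and set $\Vector{w}_h := \Vector{v}_h + 2\Vector{z}_h$. By the continuity of $a_\epsilon$ from Lemma \ref{lemma:contCoerM},
$$|a_\epsilon(\Vector{v}_h,\Vector{w}_h)| \geq \tfrac{1}{\sqrt{2}}\|\Vector{v}_h\|_{\imp,\kappa,G}^2 - C\|\Vector{v}_h\|_{\imp,\kappa,G}\|\Vector{z}-\Vector{z}_h\|_{\imp,\kappa,G}.$$
Theorem \ref{thm:maxwellStable} applied to the (adjoint) equation satisfied by $\Vector{z}$—whose volume source is $(\kappa^2-\mathrm{i}\epsilon)\Vector{v}_h$ and whose boundary data is $-\mathrm{i}\kappa\Vector{v}_{h,T}$, with $\epsilon\leq\kappa^2$—gives the a priori bound $\|\Vector{z}\|_{\imp,\kappa,G}\lesssim (1+\kappa L)\|\Vector{v}_h\|_{\imp,\kappa,G}$. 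Combining this stability with the wavenumber-explicit regularity splitting of $\Vector{z}$ from \cite{melenk2023wavenumber} (analytic part plus low-regularity residual) and the definition of $\eta(V_h(G))$ should yield an approximation estimate of the form
$$\|\Vector{z}-\Vector{z}_h\|_{\imp,\kappa,G} \lesssim \bigl(\eta(V_h(G)) + \kappa h\bigr)(1+\kappa L)\|\Vector{v}_h\|_{\imp,\kappa,G}.$$
Under the hypotheses $\kappa h\leq C$ and $\eta(V_h(G))\leq C$ with $C$ small enough, the correction term is absorbed, so $|a_\epsilon(\Vector{v}_h,\Vector{w}_h)| \gtrsim \|\Vector{v}_h\|_{\imp,\kappa,G}^2$. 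A triangle inequality together with the same adjoint stability gives $\|\Vector{w}_h\|_{\imp,\kappa,G} \lesssim (1+\kappa L)\|\Vector{v}_h\|_{\imp,\kappa,G}$, and dividing yields the claimed inf-sup ratio $\gtrsim (1+\kappa L)^{-1}$.

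The main obstacle is the approximation estimate for $\Vector{z}-\Vector{z}_h$. The quantity $\eta(V_h(G))$ is defined only for data in $H^1(G)^3$, whereas $\Vector{v}_h \in V_h(G)$ sits inside $H_\imp(\Vector{curl};G)$ and is not globally $H^1$, so $\eta$ cannot be applied to $\Vector{v}_h$ directly. Closing this gap requires decomposing either $\Vector{v}_h$ (via a Hiptmair-type regular/gradient splitter) or, equivalently, the adjoint solution $\Vector{z}$ into a smooth piece—to which $\eta(V_h(G))$ does apply—and a residual whose contribution is absorbed by the $\kappa h$ factor through an interpolation-type estimate. This is the Maxwell analogue of the frequency-filter machinery of \cite{melenk2010convergence}, and it is precisely the wavenumber-explicit estimates of \cite{melenk2023wavenumber} that make such a decomposition work with constants that produce the $(1+\kappa L)^{-1}$ scaling rather than something worse.
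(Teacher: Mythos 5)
Your skeleton is the right one and matches the paper's strategy at the top level: the duality test function $\Vector{v}_h+2S^*\Vector{v}_h$, the identity $a_\epsilon(\Vector{v}_h,\Vector{v}_h+2S^*\Vector{v}_h)=\Vert\nabla\times\Vector{v}_h\Vert_{0,G}^2+(\kappa^2+\mathrm{i}\epsilon)\Vert\Vector{v}_h\Vert_{0,G}^2+\mathrm{i}\kappa\Vert\Vector{v}_{h,T}\Vert_{0,\partial G}^2$ with nonnegative real and imaginary parts, and the bound $\Vert S^*\Vector{v}_h\Vert_{\imp,\kappa,G}\lesssim(1+\kappa L)\Vert\Vector{v}_h\Vert_{\imp,\kappa,G}$ from Theorem \ref{thm:maxwellStable} are all correct and are exactly the paper's ingredients. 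But the step you yourself flag as ``the main obstacle'' --- producing $\Vector{z}_h\in V_h(G)$ close to $S^*\Vector{v}_h$ --- is the entire content of the theorem, and it is not closed. Worse, the estimate you conjecture, $\Vert\Vector{z}-\Vector{z}_h\Vert_{\imp,\kappa,G}\lesssim(\eta(V_h(G))+\kappa h)(1+\kappa L)\Vert\Vector{v}_h\Vert_{\imp,\kappa,G}$, cannot be absorbed into $\tfrac{1}{\sqrt 2}\Vert\Vector{v}_h\Vert_{\imp,\kappa,G}^2$ under the stated hypotheses: with the $(1+\kappa L)$ factor present you would need $\kappa h(1+\kappa L)\leq C$ and $\eta(V_h(G))(1+\kappa L)\leq C$, i.e.\ a mesh condition that degenerates as $\kappa L$ grows, which is strictly stronger than what the theorem asserts. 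So as written the argument either does not close or proves a weaker statement.

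The paper resolves this by decomposing $\Vector{v}_h$ rather than $\Vector{z}$: a regular (Helmholtz-type) splitting $\Vector{v}_h=\nabla q+\Vector{\phi}$ with $\Vert q\Vert_{1,G}+\Vert\Vector{\phi}\Vert_{1,G}\lesssim\Vert\Vector{v}_h\Vert_{\imp,\kappa,G}$, combined with $\Vector{v}_h=\Vector{r}_h\Vector{v}_h=\nabla q_h+\Vector{\phi}-(\Vector{\phi}-\Vector{r}_h\Vector{\phi})$. The key observations are (i) $S^*(\nabla q_h)=-\nabla q_h$, which already lies in $V_h(G)$, so the (non-smooth) gradient part of the test function requires \emph{no} approximation at all; (ii) only $S^*\Vector{\phi}$ with $\Vector{\phi}\in H^1(G)^3$ must be approximated, and this is precisely the situation $\eta(V_h(G))$ was defined for, giving an error $\lesssim\eta(V_h(G))\Vert\Vector{\phi}\Vert_{1,G}\lesssim\eta(V_h(G))\Vert\Vector{v}_h\Vert_{\imp,\kappa,G}$ with no $(1+\kappa L)$ loss; and (iii) the interpolation residual $\Vector{\phi}-\Vector{r}_h\Vector{\phi}$ enters the lower bound only through the zeroth-order and boundary terms of $a_\epsilon$, contributing $O(\kappa h+(\kappa h)^{1/2})\Vert\Vector{v}_h\Vert_{\imp,\kappa,G}^2$ via \eqref{eq:proofMaxwellInfSupInterp}. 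The $(1+\kappa L)$ factor then appears only in the \emph{upper} bound $\Vert\Vector{v}_h+\Vector{w}_h\Vert_{\imp,\kappa,G}\lesssim(1+\kappa L)\Vert\Vector{v}_h\Vert_{\imp,\kappa,G}$, which is harmless. Your alternative of splitting the adjoint solution $\Vector{z}$ via the regularity decomposition of \cite{melenk2023wavenumber} would have to contend with exactly the gradient part of the datum $\Vector{v}_h$, for which no smoothing is available; the decomposition must therefore be performed on $\Vector{v}_h$, and the exact discrete representability of $S^*$ applied to discrete gradients is the idea your proposal is missing.
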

\begin{proof}
    For any $\Vector{v}_h\in V_h({G})\subset H_{\imp}(\Vector{curl};{G})$, there holds the following Helmholtz decomposition:
    \begin{equation*}
        \Vector{v}_h= \nabla q + \Vector{\phi},
    \end{equation*}
    where $q\in H^1({G})$ and $\Vector{\phi}\in H^1({G})^3$ satisfy
    \begin{equation}\label{eq:helmholtzDecomp}
        \Vert{q}\Vert_{1,{G}} + \Vert{\Vector{\phi}}\Vert_{1,{G}} \lesssim \Vert{\Vector{v}_h}\Vert_{\imp,\kappa,{G}}.
    \end{equation}
    Notice that $\Vector{v}_h$ is a N\'ed\'elec finite element function, then we have
    \begin{align*}
        \Vector{v}_h=\Vector{r}_h\Vector{v}_h = \Vector{r}_h(\nabla q) + \Vector{r}_h \Vector{\phi}=\nabla q_h+\Vector{\phi} - (\Vector{\phi}-\Vector{r}_h\Vector{\phi}),
    \end{align*}
    where $q_h\in Z_h({G})$. Since $\Vector{\phi}\in H^1({G})^3$ and $\nabla\times\Vector{\phi} = \nabla\times \Vector{v}_h$ is a piecewise polynomial, there holds the following estimate (refer to \cite[Lemma 3.1]{toselli2000overlapping})
    \begin{equation}\label{eq:proofMaxwellInfSupInterp}
        \Vert{\Vector{\phi}-\Vector{r}_h\Vector{\phi}}\Vert_{0,{G}}+h^{\frac{1}{2}}\Vert{(\Vector{\phi}-\Vector{r}_h\Vector{\phi})_T}\Vert_{0,\partial{G}}\lesssim h \Vert{\Vector{\phi}}\Vert_{1,{G}}.
    \end{equation}
    It is easy to see that
    \begin{equation*}
        a_{\epsilon}(\Vector{\xi},\nabla q_h) = -(\kappa^2+\mathrm{i}\epsilon)({\Vector{\xi}},{\nabla q_h})_{{G}}-\mathrm{i}\kappa\langle{\Vector{\xi}_T},{(\nabla q_h)_T}\rangle_{\partial{G}},\quad \forall \Vector{\xi}\in H_{\imp}({G}).
    \end{equation*}
    Then $S^*(\nabla q_h) = -\nabla q_h$. Therefore, defining $\Vector{w} = 2S^*(\nabla q_h+\Vector{\phi}) = -2\nabla q_h + 2S^*(\Vector{\phi})$, it's easy to check that $\Vector{w} = 2S^*(\Vector{v}_h+(\Vector{\phi}-\Vector{r}_h\Vector{\phi}))$. Then we have
    \begin{align*}
        a_{\epsilon}(\Vector{v}_h,\Vector{v}_h+\Vector{w})
         & = a_{\epsilon}(\Vector{v}_h,\Vector{v}_h) + 2a_{\epsilon}(\Vector{v}_h,S^*\Vector{v}_h) + 2a_{\epsilon}(\Vector{v}_h,S^*(\Vector{\phi}-\Vector{r}_h\Vector{\phi}))  \\
         & =\Vert{\nabla\times\Vector{v}_h}\Vert_{0,{G}}^2 + (\kappa^2+\mathrm{i}\epsilon)\Vert{\Vector{v}_h}\Vert_{0,{G}}^2 + \mathrm{i}\kappa\Vert{(\Vector{v}_h)_T}\Vert_{0,\partial{G}}^2  \\
         & + 2(\kappa^2+\mathrm{i}\epsilon)({\Vector{v}_h},{\Vector{\phi}-\Vector{r}_h\Vector{\phi}})_{{G}} + 2\mathrm{i}\kappa\langle{\Vector{v}_{h,T}},{(\Vector{\phi}-\Vector{r}_h\Vector{\phi})_T}\rangle_{\partial{G}}.
    \end{align*}
    Combining it with \eqref{eq:proofMaxwellInfSupInterp} gives
    \begin{equation*}
        \begin{aligned}
            \vert{a_\epsilon(\Vector{v}_h,\Vector{v}_h+\Vector{w})}\vert
             & \geq \frac{1}{\sqrt{2}}\Vert{\Vector{v}_h}\Vert_{\imp,\kappa,{G}}^2 -2\sqrt{2}\kappa^2\Vert{\Vector{v}_h}\Vert_{0,{G}}\Vert{\Vector{\phi}-\Vector{r}_h\Vector{\phi}}\Vert_{0,{G}} \\
             & - 2\kappa\Vert{\Vector{v}_{h,T}}\Vert_{0,\partial{G}}\Vert{(\Vector{\phi}-\Vector{r}_h\Vector{\phi})_T}\Vert_{0,\partial{G}} \\
             & = \frac{1}{\sqrt{2}}\Vert{\Vector{v}_h}\Vert_{\imp,\kappa,{G}}^2 -O(\kappa h)\kappa\Vert{\Vector{v}_h}\Vert_{0,{G}}\Vert{\Vector{\phi}}\Vert_{1,{G}} \\
             & -O((\kappa h)^\frac{1}{2})\kappa^{\frac{1}{2}}\Vert{(\Vector{v}_h)_T}\Vert_{0,\partial{G}}\Vert{\Vector{\phi}}\Vert_{1,{G}} \\
             & \gtrsim \left[1-O(\kappa h + (\kappa h)^\frac{1}{2})\right]\Vert{\Vector{v}_h}\Vert_{\imp,\kappa,{G}}^2,
        \end{aligned}
    \end{equation*}
    where the last inequality is derived from \eqref{eq:helmholtzDecomp}. Therefore we have
    \begin{equation*}
        \vert{a_\epsilon(\Vector{v}_h,\Vector{v}_h+\Vector{w})}\vert \gtrsim \Vert{\Vector{v}_h}\Vert_{\imp,\kappa,{G}}^2
    \end{equation*}
    provided that $\kappa h$ is small enough.

    Define $\Vector{w}_h = -2\nabla p_h + 2\Vector{\phi}_h\in V_h({G})$, where $\Vector{\phi}_h\in V_h({G})$ satisfies
    \begin{equation*}
        \Vector{\phi}_h = \mathop{\mathrm{arginf}}_{\Vector{\psi}_h\in V_h({G})}\Vert{S^*\Vector{\phi}-\Vector{\psi}_h}\Vert_{\imp,\kappa,{G}}.
    \end{equation*}
    Then $\Vector{w}_h$ is a good approximation of $\Vector{w}$, namely,
    \begin{equation*}
        \Vert{\Vector{w}-\Vector{w}_h}\Vert_{\imp,\kappa,{G}} = 2\Vert{\Vector{\phi}_h-\Vector{\phi}}\Vert_{\imp,\kappa,{G}}\leq 2\eta(V_h({G}))\Vert{\Vector{\phi}}\Vert_{1,{G}}\lesssim\eta(V_h({G}))\Vert{\Vector{v}_h}\Vert_{\imp,\kappa,{G}}.
    \end{equation*}
    Thus we can immediately derive
    \begin{align}
        \big\vert a_{\epsilon}(\Vector{v}_h,\Vector{v}_h+\Vector{w}_h)\big\vert
         & = \vert{a_{\epsilon}(\Vector{v}_h,\Vector{v}_h+\Vector{w}) + a_{\epsilon}(\Vector{v}_h,\Vector{w}_h-\Vector{w})}\vert \cr
         & \geq \vert{a_{\epsilon}(\Vector{v}_h,\Vector{v}_h+\Vector{w})}\vert-O(1)\eta(V_h({G}))\Vert{\Vector{v}_h}\Vert_{\imp,\kappa,{G}}^2 \cr
         & \gtrsim \left[1-O(1)\eta(V_h({G}))\right]\Vert{\Vector{v}_h}\Vert_{\imp,\kappa,{G}}^2 \cr
         &\gtrsim \Vert{\Vector{v}_h}\Vert_{\imp,\kappa,{G}}^2\label{eq:proofDiscreteInfSupM1}
    \end{align}
    provided that $\eta(V_h(G))$ is small enough. On the other hand, from Theorem \ref{thm:maxwellStable}, \eqref{eq:proofMaxwellInfSupInterp}, \eqref{eq:helmholtzDecomp} and $\kappa h\lesssim 1$,
    there holds
    \begin{align*}
        \Vert{\Vector{w}}\Vert_{\imp,\kappa,{G}}^2
         & \lesssim \left(1+\kappa L+(\kappa L)^2\right)\kappa^2\Vert{\Vector{v}_h+ (\Vector{\phi}-\Vector{r}_h\Vector{\phi})}\Vert_{0,{G}}^2 \\
         & +\left(1+\kappa L\right)\kappa\Vert{(\Vector{v}_h+(\Vector{\phi}-\Vector{r}_h\Vector{\phi}))_T}\Vert_{0,\partial{G}}^2 \\
         & \lesssim \left(1+\kappa L+(\kappa L)^2\right)\kappa^2\left(\Vert{\Vector{v}_h}\Vert_{0,{G}}^2+  h^2\Vert{\Vector{v}_h}\Vert_{\imp,\kappa,{G}}^2\right) \\
         & + \left(1+\kappa L\right)\kappa\left(\Vert{\Vector{v}_{h,T}}\Vert_{0,\partial{G}}^2+h\Vert{\Vector{v}_h}\Vert_{\imp,\kappa,{G}}^2\right) \\
         & \lesssim \left(1+\kappa L+(\kappa L)^2\right) \Vert{\Vector{v}_h}\Vert_{\imp,\kappa,{G}}^2 \\
         & \lesssim \left(1+\kappa L\right)^2\Vert{\Vector{v}_h}\Vert_{\imp,\kappa,{G}}^2,
    \end{align*}
    namely
    \begin{equation*}
        \Vert{\Vector{w}}\Vert_{\imp,\kappa,{G}} \lesssim \left(1+\kappa L\right)\Vert{\Vector{v}_h}\Vert_{\imp,\kappa,{G}}.
    \end{equation*}
    Then we have
    \begin{align}\label{eq:proofDiscreteInfSupM2}
        \Vert{\Vector{v}_h+\Vector{w}_h}\Vert_{\imp,\kappa,{G}}
         & \leq \Vert{\Vector{v}_h}\Vert_{\imp,\kappa,{G}}+\Vert{\Vector{w}}\Vert_{\imp,\kappa,{G}} + \Vert{\Vector{w}-\Vector{w}_h}\Vert_{\imp,\kappa,{G}} \cr
         & \lesssim \left(1+\kappa L +\eta(V_h({G}))\right)\Vert{\Vector{v}_h}\Vert_{\imp,\kappa,{G}} \cr
         & \lesssim \left(1+\kappa L\right)\Vert{\Vector{v}_h}\Vert_{\imp,\kappa,{G}}.
    \end{align}
    Combining \eqref{eq:proofDiscreteInfSupM1} and \eqref{eq:proofDiscreteInfSupM2} finally leads to
    \begin{align*}
         & \quad\inf_{0\neq\Vector{v}_h\in V_h({G})} \sup_{0\neq\Vector{\psi}_h\in V_h({G})}\frac{\vert{a_{\epsilon}(\Vector{v}_h,\Vector{\psi}_h)}\vert}{\Vert{\Vector{v}_h}\Vert_{\imp,\kappa,{G}}\Vert{\Vector{\psi}_h}\Vert_{\imp,\kappa,{G}}} \\
         & \geq \inf_{0\neq\Vector{v}_h\in V_h({G})} \frac{\vert{a_{\epsilon}(\Vector{v}_h,\Vector{v}_h+\Vector{w}_h)}\vert}{\Vert{\Vector{v}_h}\Vert_{\imp,\kappa,{G}}\Vert{\Vector{v}_h+\Vector{w}_h}\Vert_{\imp,\kappa,{G}}}
        \gtrsim\frac{1}{1+\kappa L}.
    \end{align*}
\end{proof}

The value of $\eta(V_h({G}))$ reflects the extent to which $V_h({G})$ approximates the solution to the adjoint problem \eqref{eq:S*def}. Theorem \ref{thm:discreteInfSupM} holds only when $\eta(V_h({G}))$ is small enough. We will make the following assumption.
\begin{assumption}\label{assump:eta}
    For any $C>0$, there exists $\bar{h}(\kappa,p)$ such that if the mesh size $h$ is chosen such that $h\leq\bar{h}(\kappa,p)$, then there holds $\eta(V_h(G))\leq C$.
\end{assumption}
\begin{remark}
    According to \cite[Theorem 7.3]{melenk2023wavenumber}, for pure Maxwell problems posed on analytic domain, Assumption \ref{assump:eta} holds with $\bar{h}(\kappa,p)\sim \kappa^{-\frac{p+1}{p}}$, which can be easily generalized to problems with absorption. Unfortunately, to the best of our knowledge, there are still no results on the wavenumber-explicit representation of $\bar{h}(\kappa,p)$ for problem posed on a general domain. Nevertheless, from \cite{monk2003finite,gatica2012finite}, we know that such $\bar{h}(\kappa,p)$ exists.
\end{remark}

\begin{corollary}\label{coro:discreteStable}
    Let the assumptions made in Theorem \ref{thm:maxwellStable} and Assumption \ref{assump:eta} be satisfied. We have
    \begin{equation}\label{eq:discreteStableM1}
        \Vert{\Vector{E}_h}\Vert_{\imp,\kappa,{G}}\lesssim\min\bigg\{1+\kappa L,\frac{\kappa^2}{\epsilon}\bigg\}\sup_{\Vector{\xi}_h\in V_h({G})}\frac{\vert{\Vector{F}(\Vector{\xi})}\vert}{\Vert{\Vector{\xi}}\Vert_{\imp,\kappa,{G}}},
    \end{equation}
    where $\Vector{E}_h$ is the solution to \eqref{eq:discreteMVF}.
    Moreover, if ${G}$ is not star-shaped, we can only obtain
    \begin{equation}\label{eq:discreteStableM2}
        \Vert{\Vector{E}_h}\Vert_{\imp,\kappa,{G}}\lesssim\frac{\kappa^2}{\epsilon}\sup_{\Vector{\xi}_h\in V_h({G})}\frac{\vert{\Vector{F}(\Vector{\xi})}\vert}{\Vert{\Vector{\xi}}\Vert_{\imp,\kappa,{G}}}.
    \end{equation}
\end{corollary}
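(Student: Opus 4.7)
The plan is to obtain the corollary as an immediate consequence of two distinct lower bounds for $|a_\epsilon(\Vector{E}_h, \cdot)|$ that are already in hand, then take the minimum.

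First I would derive the $(1+\kappa L)$-bound from the discrete inf-sup condition of Theorem \ref{thm:discreteInfSupM}. Under the stated hypotheses (Theorem \ref{thm:maxwellStable} plus Assumption \ref{assump:eta}), the constant $C$ in Theorem \ref{thm:discreteInfSupM} is achieved, so for every $\Vector{v}_h \in V_h(G)$,
\begin{equation*}
    \Vert{\Vector{v}_h}\Vert_{\imp,\kappa,G} \lesssim (1+\kappa L)\sup_{0\neq\Vector{w}_h\in V_h(G)}\frac{\vert a_\epsilon(\Vector{v}_h,\Vector{w}_h)\vert}{\Vert{\Vector{w}_h}\Vert_{\imp,\kappa,G}}.
\end{equation*}
Applied to $\Vector{v}_h=\Vector{E}_h$ and using the equation $a_\epsilon(\Vector{E}_h,\Vector{w}_h)=\Vector{F}(\Vector{w}_h)$ from \eqref{eq:discreteMVF} yields the first half of the min in \eqref{eq:discreteStableM1}.

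Next I would obtain the $\kappa^2/\epsilon$-bound purely from the coercivity estimate \eqref{eq:coerciveM} in Lemma \ref{lemma:contCoerM}, which does \emph{not} require $G$ to be star-shaped and hence will cover \eqref{eq:discreteStableM2} as well. Testing \eqref{eq:discreteMVF} against $\Vector{\xi}_h = \Vector{E}_h$ gives $a_\epsilon(\Vector{E}_h,\Vector{E}_h) = \Vector{F}(\Vector{E}_h)$, so
\begin{equation*}
    \frac{\epsilon}{\kappa^2}\Vert{\Vector{E}_h}\Vert_{\imp,\kappa,G}^2 \lesssim \vert a_\epsilon(\Vector{E}_h,\Vector{E}_h)\vert = \vert\Vector{F}(\Vector{E}_h)\vert \leq \Vert{\Vector{E}_h}\Vert_{\imp,\kappa,G}\sup_{\Vector{\xi}_h\in V_h(G)}\frac{\vert\Vector{F}(\Vector{\xi}_h)\vert}{\Vert{\Vector{\xi}_h}\Vert_{\imp,\kappa,G}},
\end{equation*}
and dividing through gives the $\kappa^2/\epsilon$ factor. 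Combining the two bounds yields \eqref{eq:discreteStableM1}, and in the non-star-shaped case only the coercivity argument survives, giving \eqref{eq:discreteStableM2}.

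There is no real obstacle here beyond quoting the earlier results correctly: the main conceptual point is that Theorem \ref{thm:discreteInfSupM} is the hard ingredient and relies on the star-shapedness of $G$ (through Theorem \ref{thm:maxwellStable} and Corollary \ref{coro:maxwellLitepStablePoly}), whereas the coercivity route makes no geometric assumption on $G$ and therefore furnishes the weaker fallback bound \eqref{eq:discreteStableM2} in the general case. Taking the minimum is then a tautology.
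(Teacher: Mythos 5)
Your argument is correct and is essentially identical to the paper's own (very terse) proof: the bound with factor $1+\kappa L$ comes from the discrete inf--sup condition of Theorem \ref{thm:discreteInfSupM} applied to $\Vector{E}_h$ together with \eqref{eq:discreteMVF}, the bound with factor $\kappa^2/\epsilon$ comes from the coercivity estimate \eqref{eq:coerciveM} tested with $\Vector{\xi}_h=\Vector{E}_h$, and the non-star-shaped case retains only the coercivity route. Nothing to add.
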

\begin{proof}
    The estimate \eqref{eq:discreteStableM1} follows from Lemma \ref{lemma:contCoerM} and Theorem \ref{thm:discreteInfSupM}. When ${G}$ is not star-shaped, the results in Theorem \ref{thm:discreteInfSupM} fail and one can only obtain \eqref{eq:discreteStableM2} from Lemma \ref{lemma:contCoerM}.
\end{proof}

\begin{remark}\label{remark:adjointStableM}
    The results in Theorem \ref{thm:maxwellStable} and Corollary \ref{coro:discreteStable} also hold for adjoint problem.
\end{remark}

\section{A hybrid Schwarz preconditioner}\label{sec:preconditioner}
In this section, we are devoted to constructing a two-level overlapping domain decomposition preconditioner $B_\epsilon^{-1}$ for $A_\epsilon$. This preconditioner is similar to that proposed in \cite{hu2024novel}.
\subsection{One level preconditioner}\label{subsec:1level}
Let $\overline{\Omega}=\cup_{l=1}^N \overline{\Omega_l}$ be an overlapping domain decomposition of $\Omega$. Each $\Omega_l$ consists of a union of elements of the mesh $\mathcal{T}_h$. In the following, we will make a few assumptions on the subdomains.

At first, we assume that the subdomains are shape-regular Lipschitz polyhedra of diameter $H_l=\mathop{\rm diam}(\Omega_l)$. We then denote the subdomain size by $H\coloneqq\max\{H_l:l = 1,...,N\}$ and the overlap by $\delta$. For a subdomain $\Omega_l$, define
\begin{equation*}
    \Lambda(l)\coloneqq \{j:~ \Omega_j\cap\Omega_l \neq \emptyset\}.
\end{equation*}
Namely, $\Lambda(l)$ consists of the indices of the subdomains that intersect with $\Omega_l$.
We will make the following assumptions.
\begin{assumption}[Finite Covering]\label{assump:finiteOverlap}
    The number of indices in $\Lambda(l)$ is uniformly bounded with respect to $l$, i.e.,
    \begin{equation*}
        \#\Lambda(l)\lesssim 1,\quad \forall 1\leq l\leq N.
    \end{equation*}
\end{assumption}
\begin{assumption}[Star-shaped Subdomains]
    Every subdomain $\Omega_l$ is star-shaped with respect to a ball with radius $\sim H$.
\end{assumption}

For each $\Omega_l$, we could define the natural restriction $R_l:V_h(\Omega)\rightarrow V_h(\Omega_l)$ by $R_l {\bf v}_h = {\bf v}_h|_{\Omega_l}$ for any ${\bf v}_h\in V_h(\Omega)$. Then the N\'ed\'elec finite element space on $\Omega_l$ can be defined by
$$V_h(\Omega_l)\coloneqq \{R_l{\bf v}_h:~{\bf v}_h\in V_h(\Omega)\}.$$
We can also define a prolongation operator $E_l:V_h(\Omega_l)\rightarrow V_h(\Omega)$.
Let $\xi$ denote an edge, or a face or an element in the mesh $\mathcal{T}_h$, and let $M_{\xi}^i({\bf v}_h), 1\leq i\leq n_\xi$ denote a N\'ed\'elec moment degree of freedom of ${\bf v}_h\in V_h(\Omega)$ corresponding to $\xi$, where $n_\xi$ is the number of the degrees of freedom associated with $\xi$ (see \cite{monk2003finite} for the precise definition). We use $\mathcal{N}_h(\Omega)$ (resp. $\mathcal{N}_h(\Omega_l)$) to denote the set of all edges/faces/elements lies in $\overline{\Omega}$ (rep. $\overline{\Omega}_l$).
For
${\bf v}_{h,l}\in V_h(\Omega_l)$ and $\xi\in \mathcal{N}_h(\Omega)$,
we could define $E_l\Vector{v}_{h,l}$ as

\begin{equation*}
    M_{\xi}^i(E_l\Vector{v}_{h,l}) =
    \left\lbrace
        \begin{aligned}
            &M_{\xi}^i(\Vector{v}_{h,l}), & & \text{if}~~\xi\in\mathcal{N}_h(\Omega_l)\\
            &~~~~0, & & \text{if}~~\xi\notin\mathcal{N}_h(\Omega_l)
        \end{aligned}
    \right.
    ,\quad 1\leq i\leq n_\xi
\end{equation*}

In the next, we will introduce a local impedance problem on each $\Omega_l$. Define the local sesquilinear form $a_{\epsilon,l}(\cdot,\cdot)$ by
\begin{align*}
    a_{\epsilon,l}(\Vector{v}_l,\Vector{w}_l)
    &\coloneqq ({\nabla\times \Vector{v}_l},{\nabla\times{\Vector{w}}_l})_{\Omega_l} - (\kappa^2+\mathrm{i}\epsilon)({\Vector{v}_l},{{\Vector{w}}_l})_{\Omega_l}\\
    &-\mathrm{i}\kappa\langle{\Vector{v}_{l,T}},{{\Vector{w}}_{l,T}}\rangle_{\partial\Omega_l}\quad\quad\forall \Vector{v}_l,\Vector{w}_l\in H_\imp(\Vector{curl};\Omega_l).
\end{align*}
Then the local discrete variational problem writes: find $\Vector{E}_{h,l}\in V_h(\Omega_l)$ such that
\begin{equation}\label{eq:localDiscreteMVF}
    a_{\epsilon,l}(\Vector{E}_{h,l},\Vector{\xi}_{h,l}) = \Vector{F}_l(\Vector{\xi}_{h,l}),\quad \forall \Vector{\xi}_{h,l}\in V_h(\Omega_l).
\end{equation}
And an operator $A_{\epsilon,l}:V_h(\Omega_l)\rightarrow V_h(\Omega_l)$ could be defined by
\begin{equation*}
    ({A_{\epsilon,l} \Vector{v}_{h,l}},{\Vector{w}_{h,l}})_{\Omega_l} = a_{\epsilon,l}(\Vector{v}_{h,l},\Vector{w}_{h,l}),\quad\forall \Vector{v}_{h,l},\Vector{w}_{h,l}\in V_h(\Omega_l).
\end{equation*}

As in \cite{graham2020domain,hu2024novel}, we introduce a set of functions $\{\chi_l\}_{l=1}^N$ forming a partition of unity to connect the local problems.
Assume that $\chi_l: \overline{\Omega}\rightarrow \mathbb{R}$ satisfies
\begin{equation*}
    \supp \chi_l \subseteq \overline{\Omega}_l,\quad 0\leq\chi_l\leq 1, \quad \sum_{l=1}^{N}\chi_l = 1
\end{equation*}
and
\begin{equation*}
    \vert\nabla\chi_l\vert\lesssim\delta^{-1}.
\end{equation*}
With the help of $\chi_l$, we could define a weighted operator $D_l:V_h(\Omega)\rightarrow V_h(\Omega_l)$ by
\begin{equation*}
    D_l(\Vector{v}_h) = \widetilde{\Vector{r}}_h(\chi_l \Vector{v}_h),\quad \forall \Vector{v}_h\in V_h(\Omega),
\end{equation*}
where $\widetilde{\Vector{r}}_h$ is a special ``quasi-N\'ed\'elec interpolation" operator introduced in Appendix \ref{sec:appendixA}.

Finally, the one-level weighted additive Schwarz preconditioner writes
\begin{equation}\label{eq:1levelpreM}
    B_{\epsilon,{\rm WASI}}^{-1} = \sum_{l=1}^{N} D_l E_l A_{\epsilon,l}^{-1}E_l^*.
\end{equation}

\begin{remark} In theory we can also use the standard ``N\'ed\'elec interpolation" operator ${\bf r}_h$ to define the weight operator $D_l$.
    There is no essential difference between the uses of $\widetilde{\Vector{r}}_h$ and $\Vector{r}_h$ in the convergence analysis in Section \ref{sec:convergence}. But the implementation of $\widetilde{\Vector{r}}_h$
    is more  economical than that of $\Vector{r}_h$ since there is no need to calculate numerical integrals for $\widetilde{\Vector{r}}_h$ (see Lemma \ref{lemma:newInterpApply}).
\end{remark}

\subsection{An adaptive coarse space}
In \cite{hu2024novel}, a novel adaptive coarse space is designed for the Helmholtz equations. In this section, we will follow the same idea to build a two-level adaptive Schwarz preconditioner
for time-harmonic Maxwell equations.

Set $\Gamma_l\coloneqq\partial\Omega_l\backslash\partial\Omega$, and let ${\bf n}_l$ denote the unit outward normal on $\Gamma_l$.
Define $V_h(\Gamma_l)$ as the tangential trace space consisting of the tangential traces of the finite element functions in $V_h(\Omega_l)$, i.e.,
\begin{equation*}
    V_h(\Gamma_l) \coloneqq \big\{(\Vector{v}_{h}\times{\bf n}_l)|_{\Gamma_l}:~\Vector{v}_{h}\in V_h(\Omega_l)\big\}.
\end{equation*}
Define ${\mathcal H}_{\epsilon, l}:V_h(\Gamma_l)\rightarrow V_h(\Omega_l)$ by
\begin{equation*}
    a_{\epsilon,l}({\mathcal H}_{\epsilon, l}(\Vector{\lambda}_h),\Vector{w}_h) =\langle\Vector{\lambda}_h,\Vector{w}_{h}\rangle_{\Gamma_l},\quad \forall \Vector{w}_h\in V_{h}(\Omega_l),
\end{equation*}
where ${ \Vector{\lambda}}_h\in V_h(\Gamma_l)$ is any impedance data on $\Gamma_l$. The well-posedness of ${\mathcal H}_{\epsilon, l}$ is guaranteed by Theorem \ref{thm:maxwellStable} and we could define the discrete Maxwell-harmonic space
\begin{equation}\label{eq:discreteMHarmonic}
    V_h^{\partial}(\Omega_l)\coloneqq \big\{ \Vector{v}_h\in V_h(\Omega_l):~\Vector{v}_h={\mathcal H}_{\epsilon, l}(\Vector{\lambda}_h),\quad\forall\Vector{\lambda}_h\in V_h(\Gamma_l)\big\}.
\end{equation}
This space is also well-defined. Further, we can define the weighted Maxwell-harmonic space
\begin{equation*}
    V^{\partial}_{h}(\Omega):=\bigg\{\Vector{v}_h=\sum_{l=1}^N D_l E_l \Vector{v}_l:~\Vector{v}_l\in V_{h}^{\partial}(\Omega_l)\bigg\}.
\end{equation*}

We then define the following generalized eigenvalue problem on $V_h^\partial(\Omega_l)$: find $\Vector{\xi}\in V_h^{\partial}(\Omega_l)$ such that
\begin{equation}\label{eq:geMEigenvalue}
    (D_l E_l \Vector{\xi},D_l E_l \Vector{\theta})_{\imp,\kappa,\Omega_l} = \lambda (\Vector{\xi},\Vector{\theta})_{\imp,\kappa,\Omega_l},\quad \forall\Vector{\theta}\in V_h^{\partial}(\Omega_l).
\end{equation}
It is clear that the bilinear forms appeared in \eqref{eq:geMEigenvalue} are Hermitian positive semi-definite. Then the eigenvalues $\{\lambda_l^i\}_{i=1}^{m_l}$ are real non-negative, where $m_l=\mathop{\rm dim}V_h^\partial(\Omega_l)$. We assume that the eigenvalues are arranged in ascending order,
$$\lambda_l^1\leq\lambda_l^2\leq...\leq\lambda_l^{m_l}$$
and that the eigenvectors $\{\Vector{\xi}_l^i\}_{i=1}^{m_l}$ satisfy
\begin{equation*}
    (\Vector{\xi}_l^i,\Vector{\xi}_l^j)_{\imp,\kappa,\Omega_l}=
    \left\lbrace
    \begin{aligned}
        &1,&& i=j, \\
        &0,&& i\neq j.
    \end{aligned}
    \right.
\end{equation*}
For a given tolerance $\rho\in(0,1)$, we can define the local coarse space $V_{h,0}^\rho(\Omega_l)$ as
\begin{equation*}
    V^{\rho}_{h,0}(\Omega_l) \coloneqq \mathop{\rm span}\{\Vector{\xi}_l^i:~ \theta_l^i\geq \rho^2\}.
\end{equation*}
Since every $\Vector{v}\in V^\partial_h(\Omega_l)$ admits the decomposition
\begin{equation}\label{eq:eigenDecomp}
    \Vector{v} = \sum_{i=1}^{m_l} (\Vector{v},\Vector{\xi}_l^i)_{\imp,\kappa,\Omega_l}\Vector{\xi}_l^i,
\end{equation}
we can define the projection $\Pi_l^\rho:V_h^\partial(\Omega_l)\rightarrow V_{h,0}^\rho(\Omega_l)$ by
\begin{equation*}
    \Pi_l^\rho \Vector{v} \coloneqq \sum_{\lambda_l^i\geq \rho^2}(\Vector{v},\Vector{\xi}_l^i)_{\imp,\kappa,\Omega_l}\Vector{\xi}_l^i,\quad \forall \Vector{v}\in V^\partial_h(\Omega_l).
\end{equation*}

The following approximation result can be verified in the standard manner.
\begin{lemma}\label{lemma:eigenEstimate}
    There holds the stability estimate
    \begin{equation}\label{eq:eigenMEstimate}
        \Vert D_l E_l(I-\Pi_l^\rho)\Vector{v}_h\Vert_{\imp,\kappa,\Omega_l}\leq\rho\Vert \Vector{v}_h\Vert_{\imp,\kappa,\Omega_l},\quad\forall \Vector{v}_h\in V^\partial_h(\Omega_l).
    \end{equation}
\end{lemma}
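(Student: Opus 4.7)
The plan is to argue by direct expansion in the eigenbasis $\{\Vector{\xi}_l^i\}_{i=1}^{m_l}$ provided by \eqref{eq:geMEigenvalue}. Since both sesquilinear forms appearing in the generalized eigenvalue problem are Hermitian and the right-hand form $(\cdot,\cdot)_{\imp,\kappa,\Omega_l}$ is positive definite on $V_h^\partial(\Omega_l)$, the eigenvectors can be chosen to be orthonormal with respect to $(\cdot,\cdot)_{\imp,\kappa,\Omega_l}$, as the statement already records. Taking $\Vector{\theta}=\Vector{\xi}_l^j$ in \eqref{eq:geMEigenvalue} and using this orthonormality immediately yields the mixed orthogonality relation
\begin{equation*}
    (D_l E_l\Vector{\xi}_l^i,\,D_l E_l\Vector{\xi}_l^j)_{\imp,\kappa,\Omega_l} \;=\; \lambda_l^i\,\delta_{ij},
\end{equation*}
which is the only algebraic fact I need.

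Given $\Vector{v}_h\in V_h^\partial(\Omega_l)$, I would use the decomposition \eqref{eq:eigenDecomp}, set $c_i\coloneqq(\Vector{v}_h,\Vector{\xi}_l^i)_{\imp,\kappa,\Omega_l}$, and observe that by the definition of $\Pi_l^\rho$,
\begin{equation*}
    (I-\Pi_l^\rho)\Vector{v}_h \;=\; \sum_{\lambda_l^i<\rho^2} c_i\,\Vector{\xi}_l^i.
\end{equation*}
Applying $D_l E_l$ and using the mixed orthogonality relation above, I then compute
\begin{equation*}
    \Vert D_l E_l(I-\Pi_l^\rho)\Vector{v}_h\Vert_{\imp,\kappa,\Omega_l}^2 \;=\; \sum_{\lambda_l^i<\rho^2} |c_i|^2\,\lambda_l^i \;<\; \rho^2\sum_{\lambda_l^i<\rho^2}|c_i|^2 \;\leq\; \rho^2\sum_{i=1}^{m_l}|c_i|^2 \;=\; \rho^2\Vert\Vector{v}_h\Vert_{\imp,\kappa,\Omega_l}^2,
\end{equation*}
where the last equality is Parseval's identity for the $(\cdot,\cdot)_{\imp,\kappa,\Omega_l}$-orthonormal basis. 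Taking square roots gives \eqref{eq:eigenMEstimate}.

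There is no real obstacle here; the estimate is a textbook consequence of the spectral decomposition of a Hermitian positive semi-definite pencil. The only point that deserves a brief mention in the write-up is that the generalized eigenvalue problem \eqref{eq:geMEigenvalue} may be degenerate (the left-hand sesquilinear form is only semi-definite, since $D_l E_l$ can have a nontrivial kernel on $V_h^\partial(\Omega_l)$), but this causes no difficulty: eigenvectors associated with the zero eigenvalue are chosen $(\cdot,\cdot)_{\imp,\kappa,\Omega_l}$-orthonormally within that kernel, and they are automatically included in $(I-\Pi_l^\rho)\Vector{v}_h$ so their contribution to the left-hand side of the target estimate is exactly zero.
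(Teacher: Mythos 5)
Your argument is correct and is precisely the ``standard manner'' verification that the paper invokes without writing out (the paper gives no proof of this lemma): expand in the $(\cdot,\cdot)_{\imp,\kappa,\Omega_l}$-orthonormal eigenbasis, use the mixed orthogonality $(D_lE_l\Vector{\xi}_l^i,D_lE_l\Vector{\xi}_l^j)_{\imp,\kappa,\Omega_l}=\lambda_l^i\delta_{ij}$ obtained from \eqref{eq:geMEigenvalue}, and bound the retained eigenvalues by $\rho^2$. The only cosmetic remark is that the strict inequality in your chain should be a ``$\leq$'' to cover the case where no eigenvalue lies below $\rho^2$, which is anyway all the lemma asserts.
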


Under the above preparation, the coarse space is defined by
\begin{equation*}
    V^{\rho}_{h,0}(\Omega)\coloneqq \bigg\{\Vector{v}_h=\sum_{l=1}^N D_l E_l \Vector{v}_l:~\Vector{v}_l\in V_{h,0}^{\rho}(\Omega_l)\bigg\}.
\end{equation*}
Define $A^{\rho}_{\epsilon,0}:V^{\rho}_{h,0}(\Omega)\rightarrow V^{\rho}_{h,0}(\Omega)$ as the restriction of $A_\epsilon$ on $V^{\rho}_{h,0}(\Omega)$, i.e.
\begin{equation*}
    (A^{\rho}_{0} \Vector{v}_0,\Vector{w}_0) = a_{\epsilon}(\Vector{v}_0,\Vector{w}_0),\quad\forall \Vector{v}_0,\Vector{w}_0\in V^{\rho}_{h,0}(\Omega).
\end{equation*}
Let $E_0:V^{\rho}_{h,0}(\Omega)\rightarrow V_h(\Omega)$ be the identical lifting operator and $E_0^*$ be the adjoint operator of $E_0$ with respect to $L^2$ inner product.

Finally, the two-level weighted additive Schwarz preconditioner writes
\begin{equation}\label{eq:2levelPreM}
    B_{\epsilon}^{-1} = (I-E_0 (A^{\rho}_{0})^{-1} E_0^* A_{\epsilon})B_{\epsilon, {\rm WASI}}^{-1} + E_0 (A^{\rho}_{0})^{-1} E_0^*.
\end{equation}

\section{The main convergence results}\label{sec:convergence}
In this section, we will give a convergence analysis of the two-level preconditioner introduced in Section \ref{sec:preconditioner}. Since the preconditioner \eqref{eq:2levelPreM} is similar to that introduced in \cite{hu2024novel} in form, we follow the same technique developed in \cite{hu2024novel} to complete the convergence analysis by using the $inf$-$sup$ condition given in Theorem \ref{thm:discreteInfSupM}.
As we will see, there are some detailed differences  between the proofs of Lemma \ref{lemma:vhpBoundM} and the corresponding result in \cite{hu2024novel}. Here we have to estimate the $L^2_T$-norm of specific functions
with the help of discrete $L^2$-norm derived in Appendix \ref{sec:appendixB} since there is no trace inequality for $H(curl)$ functions in general.

In order to analyze the spectral properties of the preconditioned system $B_\epsilon^{-1} A_\epsilon$, we first define several projection operators. For $l=1,...,N$, the local projection operators $P_{\epsilon,l}: V_h(\Omega)\rightarrow V_h(\Omega_l)$ are defined as follows: for each $\Vector{v}_h\in V_h(\Omega)$, $P_{\epsilon,l} \Vector{v}_h\in V_h(\Omega_l)$ is defined as the solution to the local problem
\begin{equation}\label{eq:localProj}
    a_{\epsilon,l}(P_{\epsilon,l}\Vector{v}_h,\Vector{w}_{h,l}) = a_{\epsilon}(\Vector{v}_h,E_l \Vector{w}_{h,l}),\quad \forall \Vector{w}_{h,l}\in V_h(\Omega_l).
\end{equation}
It is easy to see that $P_{\epsilon,l}=A_{\epsilon,l}^{-1} E_l^* A_{\epsilon}$. The coarse projection operator $P^{\rho}_{\epsilon,0}: V_h(\Omega)\rightarrow V_h(\Omega)$ could be defined by the same way. For each $\Vector{v}_h\in V_h(\Omega)$, define $P^{\rho}_{\epsilon,0} \Vector{v}_h=E_0 \Vector{\zeta}_0$ with some $\Vector{\zeta}_0\in V_{h,0}^\rho$ satisfying
\begin{equation}\label{eq:coarseProj}0
    a_{\epsilon}(E_0\Vector{\zeta}_0,E_0 \Vector{w}_{h,0}) = a_{\epsilon}(\Vector{v}_h, E_0 \Vector{w}_{h,0}),\quad \forall \Vector{w}_{h,0}\in V^{\rho}_{h,0}(\Omega).
\end{equation}
Then we have $P^{\rho}_{\epsilon,0}=E_0 (A^{\rho}_{\epsilon,0})^{-1} E_0^* A_{\epsilon}$.

The well-posedness of \eqref{eq:localProj} and \eqref{eq:coarseProj} is guaranteed by \eqref{eq:coerciveM}. Finally, the global projection operator $P^{\rho}_{\epsilon}: V_h(\Omega)\rightarrow V_h(\Omega)$ is defined by
\begin{equation}\label{eq:globalProj}
    P^{\rho}_{\epsilon}= (I - {P^{\rho}_{\epsilon,0}})\sum_{l=1}^{N} D_l E_l P_{\epsilon,l} + {P^{\rho}_{\epsilon,0}}.
\end{equation}
It is easy to check that $P^{\rho}_{\epsilon} = B_\epsilon^{-1}A_\epsilon$.

\subsection{Some auxiliary results}
For $\Vector{v}_h\in V_h(\Omega)$, we could define its discrete $L^2$ norm in terms of the values of degrees of freedom, i.e.
\begin{equation*}
    \Vert{\Vector{v}_h}\Vert_{0,h,\Omega}^2 \coloneqq h^{d-2}\sum_{\xi\in \mathcal{N}_h(\Omega)}\sum_{i=1}^{n_\xi}\vert M_\xi^i(\Vector{v}_h)\vert^2.
\end{equation*}
It's easy to check that
\begin{equation}\label{eq:discreteL2Norm}
    \Vert{\Vector{v}_h}\Vert_{0,h,\Omega}\sim \Vert{\Vector{v}_h}\Vert_{0,\Omega}
\end{equation}
by scaling argument (see Appendix \ref{sec:appendixB} for the detailed proof).

As in \cite{hu2024novel}, for any $\Vector{v}_h\in V_h(\Omega)$, $P_\epsilon^\rho \Vector{v}_h$ can be reformulated as
\begin{equation}\label{eq:globalProjReformM}
    P^{\rho}_{\epsilon} \Vector{v}_h = (I-P^{\rho}_{0})\sum_{l=1}^{N}D_l E_l \Vector{v}_{h,l}^{\partial} + \Vector{v}_h,
\end{equation}
where
$\Vector{v}_{h,l}^\partial \coloneqq P_{\epsilon,l} \Vector{v}_h  -R_l \Vector{v}_h\in V^\partial_h(\Omega_l)$.

Thanks for Assumption \ref{assump:finiteOverlap}, there hold the following two lemmas (whose proofs are almost the same as that given in \cite{hu2024novel}).
\begin{lemma}\label{lemma:sumEnergyBoundM}
    Let $\Vector{v}_l\in H_{\imp}(\Vector{curl};\Omega)$ satisfy $\mathop{\rm supp}{\Vector{v}_l}\subseteq \overline{\Omega}_l$. Under Assumption  \ref{assump:finiteOverlap}, we can obtain
    \begin{equation}\label{eq:sumEnergyBoundM}
        \bigg\Vert{\sum_{l=1}^{N}\Vector{v}_l}\bigg\Vert_{\imp,\kappa,\Omega}^2\lesssim \sum_{l=1}^{N}\Vert{\Vector{v}_l}\Vert_{\imp,\kappa,\Omega}^2.
    \end{equation}
    Moreover, the right hand side of \eqref{eq:sumEnergyBoundM} satisfies
    \begin{equation*}
        \Vert{\Vector{v}_l}\Vert_{\imp,\kappa,\Omega} = \Vert{\Vector{v}_l}\Vert_{\imp^-,\kappa,\Omega_l}\leq\Vert{\Vector{v}_l}\Vert_{\imp,\kappa,\Omega_l}.
    \end{equation*}
\end{lemma}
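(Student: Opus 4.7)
The plan is to prove the two assertions in sequence: the first by a standard finite-overlap counting argument applied term by term to the definition of $\Vert\cdot\Vert_{\imp,\kappa,\Omega}$, and the second as a direct consequence of the support hypothesis combined with the $H(\Vector{curl})$-compatibility condition.

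For the inequality $\Vert\sum_l\Vector{v}_l\Vert_{\imp,\kappa,\Omega}^2\lesssim\sum_l\Vert\Vector{v}_l\Vert_{\imp,\kappa,\Omega}^2$, I would use the elementary bound $|\sum_{l\in\Lambda(x)}a_l|^2\le (\#\Lambda(x))\sum_{l\in\Lambda(x)}|a_l|^2$ pointwise, where $\Lambda(x):=\{l:x\in\overline{\Omega}_l\}$. Assumption \ref{assump:finiteOverlap} bounds $\#\Lambda(x)\le N_0$ uniformly, so after integrating over $\Omega$ one obtains
\begin{equation*}
\Bigl\Vert\sum_l\Vector{v}_l\Bigr\Vert_{0,\Omega}^2\le N_0\sum_l\Vert\Vector{v}_l\Vert_{0,\Omega}^2.
\end{equation*}
The identical argument, applied to $\nabla\times\bigl(\sum_l\Vector{v}_l\bigr)=\sum_l\nabla\times\Vector{v}_l$ (which inherits the support structure), controls the curl term; and applied pointwise on $\partial\Omega$ to the tangential traces it controls the surface term, since finite overlap in $\Omega$ forces a finite overlap of the induced covering $\{\partial\Omega_l\cap\partial\Omega\}$ of $\partial\Omega$. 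Summing the three bounds yields the first claim.

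For the second statement, the support condition $\supp\Vector{v}_l\subseteq\overline{\Omega}_l$ immediately gives $\Vert\nabla\times\Vector{v}_l\Vert_{0,\Omega}=\Vert\nabla\times\Vector{v}_l\Vert_{0,\Omega_l}$ and $\Vert\Vector{v}_l\Vert_{0,\Omega}=\Vert\Vector{v}_l\Vert_{0,\Omega_l}$. The surface term requires a short argument: since $\Vector{v}_l\in H_\imp(\Vector{curl};\Omega)$ and vanishes outside $\overline{\Omega}_l$, the tangential trace must be continuous across the interior interface $\Gamma_l=\partial\Omega_l\setminus\partial\Omega$; as the trace from the outside is zero, $(\Vector{v}_l)_T$ must vanish on $\Gamma_l$. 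Thus the $\imp^-$–boundary contribution, taken on $\partial\Omega_l\cap\Omega$, is zero, while on $\partial\Omega$ the trace of $\Vector{v}_l$ is supported in $\partial\Omega\cap\partial\Omega_l$; matching the two remaining boundary integrals yields the equality $\Vert\Vector{v}_l\Vert_{\imp,\kappa,\Omega}=\Vert\Vector{v}_l\Vert_{\imp^-,\kappa,\Omega_l}$. The final inequality $\Vert\Vector{v}_l\Vert_{\imp^-,\kappa,\Omega_l}\le\Vert\Vector{v}_l\Vert_{\imp,\kappa,\Omega_l}$ is immediate, because the $\imp^-$–boundary integral is taken over a subset of $\partial\Omega_l$.

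The main subtlety I expect lies in the tangential-trace vanishing on $\Gamma_l$: it is not a consequence of a trace inequality but the $H(\Vector{curl})$-analogue of the fact that an $H_0^1$ zero-extension must have vanishing Dirichlet trace on the interface, so one should invoke the characterization of tangential-trace continuity across a Lipschitz interface. Once this observation is in place the rest is a routine Cauchy–Schwarz / finite-overlap computation, and no new estimate on the wavenumber or mesh is needed.
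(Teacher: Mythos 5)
Your first part is the standard finite-overlap argument (pointwise Cauchy--Schwarz with the uniform bound on the number of subdomains containing a point, applied separately to the volume, curl and trace terms), which is exactly the route the paper takes by citing the Helmholtz analogue in [hu2024novel]; that portion is fine. One cosmetic caveat: Assumption \ref{assump:finiteOverlap} bounds $\#\{j:\Omega_j\cap\Omega_l\neq\emptyset\}$, so you should note explicitly that this implies the pointwise bound $\#\{l: x\in\overline{\Omega}_l\}\lesssim 1$ you actually use.

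The second part is where your write-up wobbles. The equality $\Vert\Vector{v}_l\Vert_{\imp,\kappa,\Omega}=\Vert\Vector{v}_l\Vert_{\imp^-,\kappa,\Omega_l}$ depends entirely on which piece of $\partial\Omega_l$ the norm $\Vert\cdot\Vert_{\imp^-,\kappa,\Omega_l}$ carries. The displayed definition in Section 2 reads $\langle\cdot,\cdot\rangle_{\partial G\cap\Omega}$, but the way the norm is used later (e.g.\ the identity $\Vert\Vector{v}_h\Vert_{\imp,\kappa,\Omega_l}^2=\Vert\Vector{v}_h\Vert_{\imp^-,\kappa,\Omega_l}^2+\kappa\Vert\Vector{v}_{h,T}\Vert_{0,\partial\Omega_l\setminus\partial\Omega}^2$ in the proof of Lemma \ref{lemma:vhpBoundM}) shows the intended boundary set is $\partial G\cap\partial\Omega$. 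With that intended reading the equality is immediate from the support condition alone: the volume terms localize to $\Omega_l$, and $(\Vector{v}_l)_T$ vanishes on $\partial\Omega\setminus\overline{\Omega}_l$, so $\Vert(\Vector{v}_l)_T\Vert_{0,\partial\Omega}=\Vert(\Vector{v}_l)_T\Vert_{0,\partial\Omega_l\cap\partial\Omega}$; no tangential-trace-continuity argument across $\Gamma_l$ is needed. With the literal reading $\partial\Omega_l\cap\Omega=\Gamma_l$, your own computation would give $\Vert\Vector{v}_l\Vert_{\imp^-,\kappa,\Omega_l}^2$ with \emph{zero} boundary contribution, which cannot equal $\Vert\Vector{v}_l\Vert_{\imp,\kappa,\Omega}^2$ when $(\Vector{v}_l)_T\neq 0$ on $\partial\Omega$ --- so "matching the two remaining boundary integrals" does not close the argument as written; you need to commit to the $\partial\Omega_l\cap\partial\Omega$ reading. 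Your observation that the zero extension forces $(\Vector{v}_l)_T=0$ on $\Gamma_l$ is correct in the $H^{-1/2}(\mathrm{div};\Gamma_l)$ sense (and would even upgrade the final $\leq$ to an equality), but note that $\Vector{v}_l\in H_\imp(\Vector{curl};\Omega)$ only guarantees an $L^2$ tangential trace on $\partial\Omega$, not on the interior interface, so this step should be phrased distributionally; in any case it is a detour the lemma does not require.
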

\begin{lemma}\label{lemma:rlStableM}
    If Assumption \ref{assump:finiteOverlap} is satisfied, there holds
    \begin{equation*}
        \sum_{l=1}^N \Vert{\Vector{v}}\Vert_{\imp^-,\kappa,\Omega_l}^2\lesssim \Vert{\Vector{v}}\Vert_{\imp,\kappa,\Omega}^2,\quad \forall \Vector{v}\in H_\imp(\Vector{curl};\Omega).
    \end{equation*}
\end{lemma}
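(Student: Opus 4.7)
The plan is to decompose $\|\Vector{v}\|_{\imp^-,\kappa,\Omega_l}^2$ into its two volume pieces $\|\nabla\times\Vector{v}\|_{0,\Omega_l}^2$ and $\kappa^2\|\Vector{v}\|_{0,\Omega_l}^2$ together with the interior boundary piece $\kappa\|\Vector{v}_T\|_{0,\partial\Omega_l\cap\Omega}^2$, and to absorb each of the three resulting sums separately into the matching contribution of $\|\Vector{v}\|_{\imp,\kappa,\Omega}^2$. This mirrors the shape of the right-hand side and lets Assumption \ref{assump:finiteOverlap} do the bulk of the work.

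For the volume sums the bound is essentially automatic. Swapping the sum and the integral gives
\[
\sum_{l=1}^{N}\!\big(\|\nabla\times\Vector{v}\|_{0,\Omega_l}^2+\kappa^2\|\Vector{v}\|_{0,\Omega_l}^2\big)=\int_{\Omega}\Big(\sum_{l=1}^{N}\mathbf{1}_{\Omega_l}(x)\Big)\big(|\nabla\times\Vector{v}(x)|^2+\kappa^2|\Vector{v}(x)|^2\big)\Diff V,
\]
and the pointwise covering bound $\sum_{l}\mathbf{1}_{\Omega_l}(x)\leq\max_{l}\#\Lambda(l)\lesssim 1$ delivers the required estimate for the curl and $L^2$ contributions.

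The main obstacle is the interior boundary sum $\sum_{l}\kappa\|\Vector{v}_T\|_{0,\partial\Omega_l\cap\Omega}^2$, since the right-hand side contains no matching surface term on interior cuts (only $\partial\Omega$ appears). My plan is, for each $l$, to invoke Assumption \ref{assump:finiteOverlap} to write $\partial\Omega_l\cap\Omega=\bigcup_{j\in\Lambda(l),\,j\neq l}(\partial\Omega_l\cap\Omega_j)$, and on each piece to apply a Rellich-type trace inequality on a one-sided tubular neighborhood of thickness $\sim\delta$ sitting inside the neighboring subdomain $\Omega_j$. Balancing the weights in the trace bound against $\kappa$ should produce an estimate of the form
\[
\kappa\|\Vector{v}_T\|_{0,\partial\Omega_l\cap\Omega_j}^2\lesssim\|\nabla\times\Vector{v}\|_{0,\Omega_j}^2+\kappa^2\|\Vector{v}\|_{0,\Omega_j}^2+\kappa\|\Vector{v}_T\|_{0,\partial\Omega\cap\overline{\Omega}_j}^2,
\]
after which summing over $l$ and $j$ while invoking finite covering once more collapses the right-hand side to a constant multiple of $\|\Vector{v}\|_{\imp,\kappa,\Omega}^2$.

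The genuinely delicate point is that the trace bound above is not an $H(\Vector{curl})$ estimate in general: a generic element of $H_\imp(\Vector{curl};\Omega)$ only has its tangential trace on an interior surface in $H^{-1/2}_{\|}$, so the lemma must be read as an estimate that is non-trivial precisely when its left-hand side is finite. In the sequel only $\Vector{v}_h\in V_h(\Omega)$ is ever substituted, and for such discrete functions the needed trace control follows from a standard scaling argument on the tubular slab combined with the discrete norm equivalence $\|\cdot\|_{0,h,\Omega}\sim\|\cdot\|_{0,\Omega}$ derived in Appendix \ref{sec:appendixB}, which relates face degrees of freedom to a bulk $L^2$-norm with constants independent of $h$.
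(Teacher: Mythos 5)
Your treatment of the two volume terms is correct and, under the paper's intended reading of the norm, it is essentially the whole proof. The ``main obstacle'' you identify comes from reading the boundary term in $\Vert\cdot\Vert_{\imp^-,\kappa,G}$ as an integral over the interior interface $\partial G\cap\Omega$. Although the displayed definition literally writes $\partial G\cap\Omega$, the way the norm is used everywhere else shows this is a typo for $\partial G\cap\partial\Omega$: Lemma \ref{lemma:sumEnergyBoundM} asserts $\Vert\Vector{v}_l\Vert_{\imp,\kappa,\Omega}=\Vert\Vector{v}_l\Vert_{\imp^-,\kappa,\Omega_l}$ for $\Vector{v}_l$ supported in $\overline{\Omega}_l$ (the global norm sees only the physical boundary, and the tangential trace of such a $\Vector{v}_l$ vanishes on $\partial\Omega_l\cap\Omega$), and the proof of Lemma \ref{lemma:vhpBoundM} uses the identity $\Vert\Vector{v}_h\Vert_{\imp,\kappa,\Omega_l}^2=\Vert\Vector{v}_h\Vert_{\imp^-,\kappa,\Omega_l}^2+\kappa\Vert\Vector{v}_{h,T}\Vert_{0,\partial\Omega_l\backslash\partial\Omega}^2$, which forces the $\imp^-$ boundary term to live on $\partial\Omega_l\cap\partial\Omega$. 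With that reading, the boundary sum $\sum_l\kappa\Vert\Vector{v}_T\Vert_{0,\partial\Omega_l\cap\partial\Omega}^2$ is controlled by $\kappa\Vert\Vector{v}_T\Vert_{0,\partial\Omega}^2$ by exactly the same finite-covering indicator-function argument you use for the volume terms, and the lemma is immediate --- which is why the paper dispatches it in one sentence by reference to \cite{hu2024novel}.

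Under your literal reading the statement could not be true as written: for a generic $\Vector{v}\in H_\imp(\Vector{curl};\Omega)$ the tangential trace on an interior cut is only in $H^{-1/2}$, so the left-hand side need not be finite, whereas the lemma is asserted for all of $H_\imp(\Vector{curl};\Omega)$ with a constant independent of all parameters. Your proposed repair does not close this gap: a Rellich-type multiplier argument yields $L^2$ trace control for $H^1$ functions but not for $H(\Vector{curl})$ functions, so the inequality $\kappa\Vert\Vector{v}_T\Vert_{0,\partial\Omega_l\cap\Omega_j}^2\lesssim\Vert\nabla\times\Vector{v}\Vert_{0,\Omega_j}^2+\kappa^2\Vert\Vector{v}\Vert_{0,\Omega_j}^2+\kappa\Vert\Vector{v}_T\Vert_{0,\partial\Omega\cap\overline{\Omega}_j}^2$ cannot hold with a uniform constant; and even for discrete functions the route through the discrete norm equivalence of Appendix \ref{sec:appendixB} necessarily produces a factor $(\kappa h)^{-1}$ --- this is precisely the computation $\kappa\Vert\Vector{v}_{h,T}\Vert_{0,\partial\Omega_l\backslash\partial\Omega}^2\lesssim(\kappa h)^{-1}\Vert\Vector{v}_h\Vert_{\imp^-,\kappa,\Omega_l}^2$ carried out in the proof of Lemma \ref{lemma:vhpBoundM} --- which would make the constant in the present lemma grow like $\kappa^{\gamma}$ and break the uniformity claimed in the statement. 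The correct fix is therefore not a trace inequality but the observation that no interior trace term appears on the left-hand side in the first place.
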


Define
\begin{equation}\label{eq:vhpM}
    \Vector{v}_h^\partial = \sum_{l=1}^{N}D_l E_l \Vector{v}_{h,l}^\partial.
\end{equation}
Since $\Vector{v}_{h,l}^\partial\in V_h^\partial(\Omega_l)$, we have $\Vector{v}_h^\partial\in V_h^\partial(\Omega)$. Then from Lemma \ref{lemma:eigenEstimate}, $\Vector{v}_h^\partial$ could be well approximated by functions in coarse space $V^{\rho}_{h,0}(\Omega)$.
\begin{lemma}\label{lemma:vhpApproxM}
    Let Assumption \ref{assump:finiteOverlap} be satisfied and $\Vector{z}_h = \sum_{l=1}^{N}D_l E_l \Pi_{l}^\rho \Vector{v}_{h,l}^\partial$. Then $\Vector{z}_h\in V^{\rho}_{h,0}(\Omega)$ and we can obtain
    \begin{equation}\label{eq:vhpApproxM}
        \Vert{\Vector{v}_h^\partial - \Vector{z}_h}\Vert_{\imp,\kappa,\Omega}\lesssim \rho \big(\sum_{l=1}^{N}\Vert{\Vector{v}_{h,l}^\partial}\Vert_{\imp,\kappa,\Omega_l}^2\big)^{\frac{1}{2}}.
    \end{equation}
\end{lemma}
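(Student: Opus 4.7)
The plan is to assemble the claim from three ingredients that are already in hand: linearity of $D_l E_l$, the support property of the weight $\chi_l$ that enters $D_l$, and the eigenvalue-based approximation bound of Lemma \ref{lemma:eigenEstimate}, glued together by the finite-covering estimate of Lemma \ref{lemma:sumEnergyBoundM}.

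First, by the linearity of the prolongation and weight operators, I would rewrite
\begin{equation*}
\Vector{v}_h^\partial - \Vector{z}_h \;=\; \sum_{l=1}^{N} D_l E_l \bigl(I - \Pi_l^\rho\bigr) \Vector{v}_{h,l}^\partial .
\end{equation*}
Before estimating, I would briefly note that $\Vector{z}_h$ does lie in the coarse space $V_{h,0}^\rho(\Omega)$: indeed $\Pi_l^\rho \Vector{v}_{h,l}^\partial \in V_{h,0}^\rho(\Omega_l)$ by the definition of the spectral projection, so $\Vector{z}_h$ is admissible by the definition of $V_{h,0}^\rho(\Omega)$.

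Next, I would invoke the support property of $\chi_l$. Because $\supp \chi_l \subseteq \overline{\Omega}_l$ and $D_l(\cdot) = \widetilde{\Vector{r}}_h(\chi_l \,\cdot)$, each summand $D_l E_l (I-\Pi_l^\rho)\Vector{v}_{h,l}^\partial$ is supported in $\overline{\Omega}_l$ (after extension by zero to $\Omega$). This is exactly the hypothesis needed to apply Lemma \ref{lemma:sumEnergyBoundM}, which combined with the inequality $\Vert \cdot \Vert_{\imp,\kappa,\Omega} \leq \Vert \cdot\Vert_{\imp,\kappa,\Omega_l}$ recorded at the end of that lemma gives
\begin{equation*}
\Vert \Vector{v}_h^\partial - \Vector{z}_h\Vert_{\imp,\kappa,\Omega}^{2} \;\lesssim\; \sum_{l=1}^{N} \bigl\Vert D_l E_l (I-\Pi_l^\rho)\Vector{v}_{h,l}^\partial \bigr\Vert_{\imp,\kappa,\Omega_l}^{2}.
\end{equation*}

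Finally, since $\Vector{v}_{h,l}^\partial \in V_h^\partial(\Omega_l)$, I can apply Lemma \ref{lemma:eigenEstimate} termwise to obtain
\begin{equation*}
\bigl\Vert D_l E_l (I-\Pi_l^\rho)\Vector{v}_{h,l}^\partial \bigr\Vert_{\imp,\kappa,\Omega_l} \;\leq\; \rho\,\Vert \Vector{v}_{h,l}^\partial\Vert_{\imp,\kappa,\Omega_l},
\end{equation*}
and squaring, summing, and taking the square root yields \eqref{eq:vhpApproxM}. There is no serious obstacle: the proof is essentially an orchestration of earlier machinery. The only point that requires a moment of care is the verification that the pre-image $(I-\Pi_l^\rho)\Vector{v}_{h,l}^\partial$ still lies in $V_h^\partial(\Omega_l)$ so that Lemma \ref{lemma:eigenEstimate} applies; this is immediate from \eqref{eq:eigenDecomp} because the Maxwell-harmonic space is spanned by the eigenbasis $\{\Vector{\xi}_l^i\}$ and $\Pi_l^\rho$ is just the truncation of that expansion.
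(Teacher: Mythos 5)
Your proof is correct and follows exactly the route the paper intends (the paper omits the proof, but the surrounding text points to precisely this combination of Lemma \ref{lemma:sumEnergyBoundM} applied to the locally supported summands $D_l E_l (I-\Pi_l^\rho)\Vector{v}_{h,l}^\partial$ followed by a termwise application of Lemma \ref{lemma:eigenEstimate}). The only minor quibble is your closing remark: Lemma \ref{lemma:eigenEstimate} requires membership in $V_h^\partial(\Omega_l)$ of the argument $\Vector{v}_{h,l}^\partial$ itself, not of $(I-\Pi_l^\rho)\Vector{v}_{h,l}^\partial$, so that verification is not actually needed -- though it is true.
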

Furthermore, to bound $\Vert{\Vector{v}_{h,l}^\partial}\Vert_{\imp,\kappa,\Omega_l}$, define $\widetilde{\Omega}_l$ as
\begin{equation}\label{eq:bdryStripe}
    \overline{\widetilde{\Omega}_l} = \mathop\bigcup_{\tau\in \widetilde{\mathcal T}_{h,l}} \overline{\tau},\quad \text{where }~~\widetilde{\mathcal T}_{h,l}\coloneqq\big\{\tau\in\mathcal{T}_h:~ \overline{\tau}\cap\overline{\Omega}_l\neq\emptyset\big\}.
\end{equation}
The last lemma in this subsection provides a bound of $\Vert{\Vector{v}_{h,l}^\partial}\Vert_{\imp,\kappa,\Omega_l}$ in terms of $\Vert{\Vector{v}_h}\Vert_{\imp^-,\kappa,\widetilde{\Omega}_l}$.
\begin{lemma}\label{lemma:vhpBoundM}
    Under Assumption \ref{assump:meshsize} and Assumption \ref{assump:finiteOverlap}, there holds
    \begin{equation}\label{eq:vhpBoundM}
        \Vert{\Vector{v}_{h,l}^\partial}\Vert_{\imp,\kappa,\Omega_l} \lesssim \min\bigg\{ {1+\kappa H_l,\frac{\kappa^2}{\epsilon}}\bigg\} (\kappa h)^{-\frac{1}{2}} \Vert{\Vector{v}_h}\Vert_{\imp^-,\kappa,\widetilde{\Omega}_l}.
    \end{equation}
\end{lemma}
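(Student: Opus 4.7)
The plan is to view $\Vector{v}_{h,l}^\partial$ as the solution of a local discrete impedance problem on $\Omega_l$ whose sole source of data is the tangential trace of $\Vector{v}_h$ on $\Gamma_l$, apply the discrete stability estimate of Corollary \ref{coro:discreteStable}, and then convert the resulting surface norm on $\Gamma_l$ into a volume norm on the mesh neighbourhood $\widetilde\Omega_l$. The crucial $(\kappa h)^{-1/2}$ factor will be extracted from a discrete inverse trace estimate based on the DOF-based equivalence \eqref{eq:discreteL2Norm}, which has to substitute for the missing continuous $H(\Vector{curl})$ trace inequality.

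I first identify the equation satisfied by $\Vector{v}_{h,l}^\partial$. Using $P_{\epsilon,l}=A_{\epsilon,l}^{-1}E_l^*A_\epsilon$ together with $\Vector{v}_{h,l}^\partial=P_{\epsilon,l}\Vector{v}_h-R_l\Vector{v}_h$, for every $\Vector{w}_{h,l}\in V_h(\Omega_l)$,
$$a_{\epsilon,l}(\Vector{v}_{h,l}^\partial,\Vector{w}_{h,l})=a_\epsilon(\Vector{v}_h,E_l\Vector{w}_{h,l})-a_{\epsilon,l}(R_l\Vector{v}_h,\Vector{w}_{h,l}).$$
Because $E_l\Vector{w}_{h,l}$ is supported in $\overline\Omega_l$, the volume contributions cancel and only the boundary terms differ, and only on $\Gamma_l=\partial\Omega_l\setminus\partial\Omega$, yielding
$$a_{\epsilon,l}(\Vector{v}_{h,l}^\partial,\Vector{w}_{h,l})=\mathrm{i}\kappa\langle(\Vector{v}_h)_T,(\Vector{w}_{h,l})_T\rangle_{\Gamma_l}.$$
Since each $\Omega_l$ is star-shaped with characteristic length $H_l$, Corollary \ref{coro:discreteStable} applies and yields
$$\|\Vector{v}_{h,l}^\partial\|_{\imp,\kappa,\Omega_l}\lesssim\min\bigl\{1+\kappa H_l,\,\kappa^2/\epsilon\bigr\}\sup_{\Vector{\xi}_h\in V_h(\Omega_l)}\frac{\kappa\,|\langle(\Vector{v}_h)_T,(\Vector{\xi}_h)_T\rangle_{\Gamma_l}|}{\|\Vector{\xi}_h\|_{\imp,\kappa,\Omega_l}}.$$
A Cauchy--Schwarz step together with the elementary bound $\kappa^{1/2}\|(\Vector{\xi}_h)_T\|_{0,\Gamma_l}\leq\|\Vector{\xi}_h\|_{\imp,\kappa,\Omega_l}$ further reduces the supremum to controlling $\kappa^{1/2}\|(\Vector{v}_h)_T\|_{0,\Gamma_l}$.

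The main obstacle is this final reduction: showing $\|(\Vector{v}_h)_T\|_{0,\Gamma_l}^2\lesssim h^{-1}\|\Vector{v}_h\|_{0,\widetilde\Omega_l}^2$ without any continuous trace inequality in $H(\Vector{curl})$. For this I will work element-by-element and use \eqref{eq:discreteL2Norm} in the form $\|\Vector{v}_h\|_{0,\tau}^2\sim h\sum_{\xi\subseteq\tau}\sum_i|M_\xi^i(\Vector{v}_h)|^2$ on each tetrahedron $\tau\in\widetilde\Omega_l$. By the same scaling argument, on any boundary face $f\subset\Gamma_l$ of such a $\tau$ one obtains the $2$-dimensional analogue $\|(\Vector{v}_h)_T\|_{0,f}^2\sim\sum_{\xi\subseteq f}\sum_i|M_\xi^i(\Vector{v}_h)|^2$ in which no $h$ prefactor survives (the exponent becomes $d-2=0$ on a $2$-surface in $\mathbb{R}^3$). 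Because the DOFs with geometric supports in $f$ form a subset of those supported in $\tau$, and each tetrahedron of $\widetilde\Omega_l$ touches only $O(1)$ boundary faces, summing over faces and the underlying elements gives the advertised mismatch of a single power of $h$. Combining everything, $\kappa^{1/2}\|(\Vector{v}_h)_T\|_{0,\Gamma_l}\lesssim(\kappa h)^{-1/2}\kappa\|\Vector{v}_h\|_{0,\widetilde\Omega_l}\leq(\kappa h)^{-1/2}\|\Vector{v}_h\|_{\imp^-,\kappa,\widetilde\Omega_l}$, and substitution into the stability bound establishes \eqref{eq:vhpBoundM}.
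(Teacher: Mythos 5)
Your argument breaks down at the very first reduction. You claim that because ``$E_l\Vector{w}_{h,l}$ is supported in $\overline\Omega_l$'' the volume contributions in $a_\epsilon(\Vector{v}_h, E_l\Vector{w}_{h,l}) - a_{\epsilon,l}(R_l\Vector{v}_h, \Vector{w}_{h,l})$ cancel, leaving only the impedance term $\mathrm{i}\kappa\langle(\Vector{v}_h)_T,(\Vector{w}_{h,l})_T\rangle_{\Gamma_l}$. This is false. The prolongation $E_l$ extends by zero in the \emph{degree-of-freedom} sense: a DOF $M_\xi^i$ is retained whenever $\xi\in\mathcal N_h(\Omega_l)$, i.e.\ whenever the edge/face/element $\xi$ lies in $\overline\Omega_l$. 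In particular, all DOFs attached to edges and faces on $\partial\Omega_l$ are retained, and those DOFs are \emph{shared} by the tetrahedra on the far side of $\partial\Omega_l$. Consequently $E_l\Vector{w}_{h,l}$ is generically nonzero on the strip $\widetilde\Omega_l\setminus\Omega_l$, and your supposed cancellation of the curl-curl and $L^2$ volume integrals over $\widetilde\Omega_l\setminus\Omega_l$ does not occur. (One cannot fix this by extending $\Vector{w}_{h,l}$ pointwise by zero outside $\Omega_l$ either, since that extension would not be $H(\Vector{curl})$-conforming.) The paper's proof precisely has to account for this: after \eqref{eq:contBoundM} it splits $\Vert E_l\Vector{w}_{h,l}\Vert_{\imp^-,\kappa,\widetilde\Omega_l}^2$ into the $\Omega_l$ part and a $\widetilde\Omega_l\setminus\Omega_l$ part and then controls the latter via the discrete $L^2$ norm and an inverse inequality, obtaining a $(\kappa h)^{-1}$ loss. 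That loss, not the boundary-trace one alone, drives the $(\kappa h)^{-1/2}$ in the final bound.

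To repair the proof you must therefore not replace $\Vector{F}_l$ by a pure $\Gamma_l$ boundary functional, but bound $|a_\epsilon(\Vector{v}_h,E_l\Vector{w}_{h,l})|$ and $|a_{\epsilon,l}(R_l\Vector{v}_h,\Vector{w}_{h,l})|$ separately. For the first, continuity gives a bound in terms of $\Vert\Vector{v}_h\Vert_{\imp^-,\kappa,\widetilde\Omega_l}\Vert E_l\Vector{w}_{h,l}\Vert_{\imp^-,\kappa,\widetilde\Omega_l}$ and one must show $\Vert E_l\Vector{w}_{h,l}\Vert_{\imp^-,\kappa,\widetilde\Omega_l}^2\lesssim(\kappa h)^{-1}\Vert\Vector{w}_{h,l}\Vert_{\imp,\kappa,\Omega_l}^2$, using \eqref{eq:discreteL2Norm} and an inverse estimate on the strip; for the second, one needs the discrete trace estimate $\kappa\Vert\Vector{v}_{h,T}\Vert_{0,\Gamma_l}^2\lesssim(\kappa h)^{-1}\Vert\Vector{v}_h\Vert_{\imp^-,\kappa,\Omega_l}^2$, which is essentially the only piece of your argument that is correct. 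The final ingredient you invoked — Corollary \ref{coro:discreteStable} applied on each star-shaped $\Omega_l$ — is the right one, so the skeleton of the plan is sound, but the identification of the source data is wrong and the strip estimate is missing.
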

\begin{proof}
    For any $\Vector{w}_{h,l}\in V_{h}(\Omega_l)$, it's easy to check that
    \begin{align}\label{eq:vhpImpDataM}
        a_{\epsilon,l}(\Vector{v}_{h,l}^\partial,\Vector{w}_{h,l})
         & = a_{\epsilon,l}(P_{\epsilon,l}\Vector{v}_h,\Vector{w}_{h,l}) - a_{\epsilon,l}(R_l \Vector{v}_h, \Vector{w}_{h,l}) \cr
         & = a_{\epsilon}(\Vector{v}_h, E_l \Vector{w}_{h,l}) - a_{\epsilon,l}(R_l \Vector{v}_h, \Vector{w}_{h,l})          \cr
         & \eqqcolon \Vector{F}_l(\Vector{w}_{h,l}),
    \end{align}
    which means that $\Vector{v}_{h,l}^\partial$ admits a local impedance problem with data $\Vector{F}_l$. Noting the discrete stability result, it suffices to estimate the bound of $|\Vector{F}_l(w_{h,l})|$.

    At first, from triangle inequality, there holds
    \begin{equation}\label{eq:reformFlM}
        \vert\Vector{F}_l(w_{h,l})\vert \leq \vert a_{\epsilon}(\Vector{v}_h, E_l \Vector{w}_{h,l})\vert + \vert a_{\epsilon,l}(R_l \Vector{v}_h, \Vector{w}_{h,l})\vert.
    \end{equation}
    For the estimate of the first term of the right hand side of \eqref{eq:reformFlM}, noting that $E_l \Vector{w}_{h,l}$ vanishes on $\Omega\backslash\widetilde{\Omega}_l$, we can derive
    \begin{align}\label{eq:contBoundM}
        \vert{a_{\epsilon}(\Vector{v}_h, E_l \Vector{w}_{h,l})}\vert
        & = \big\vert({\nabla\times \Vector{v}_h},{\nabla\times E_l \Vector{w}_{h,l}})_{{\widetilde{\Omega}_l}} - (\kappa^2+\mathrm{i}\epsilon)({\Vector{v}_h},{\Vector{w}_{h,l}})_{\widetilde{\Omega}_l} \notag\\
        & -\mathrm{i}\kappa\langle{\Vector{v}_{h,T}},{(E_l \Vector{w}_{h,l})_T}\rangle_{\partial\widetilde{\Omega}_l\cap\partial\Omega} \big\vert \notag\\
        &\lesssim \Vert{\nabla\times\Vector{v}_h}\Vert_{0,\widetilde{\Omega}_l}\Vert{\nabla\times E_l\Vector{w}_h}\Vert_{0,\widetilde{\Omega}_l} + \kappa^2\Vert{\Vector{v}_h}\Vert_{0,\widetilde{\Omega}_l}\Vert{ E_l\Vector{w}_h}\Vert_{0,\widetilde{\Omega}_l}\notag\\
        &+\kappa\Vert{\Vector{v}_{h,T}}\Vert_{0,\partial\widetilde{\Omega}_l\cap\partial\Omega}\Vert{ (E_l\Vector{w}_h)_T}\Vert_{0,\partial\widetilde{\Omega}_l\cap\partial\Omega}\notag\\
        &\lesssim \Vert{\Vector{v}_h}\Vert_{\imp^-,\kappa,\widetilde{\Omega}_l} \Vert{E_l \Vector{w}_{h,l}}\Vert_{\imp^-,\kappa,\widetilde{\Omega}_l}.
    \end{align}
    It's easy to check that $E_l \Vector{w}_{h,l} =\Vector{w}_{h,l}$ on $\Omega_l$ from the definition of $E_l$. Then there holds
    \begin{align}\label{eq:elwhlSplit}
        \Vert{E_l \Vector{w}_{h,l}}\Vert_{\imp^-,\kappa,\widetilde{\Omega}_l}^2
        &= \Vert{E_l \Vector{w}_{h,l}}\Vert_{\imp^-,\kappa,\Omega_l}^2 + \Vert{E_l \Vector{w}_{h,l}}\Vert_{\imp^-,\kappa,\widetilde{\Omega}_l\backslash\Omega_l}^2\cr
        &= \Vert{\Vector{w}_{h,l}}\Vert_{\imp^-,\kappa,\Omega_l}^2 + \Vert{E_l \Vector{w}_{h,l}}\Vert_{\imp^-,\kappa,\widetilde{\Omega}_l\backslash\Omega_l}^2\cr
        &\leq \Vert{\Vector{w}_{h,l}}\Vert_{\imp,\kappa,\Omega_l}^2 + \Vert{E_l \Vector{w}_{h,l}}\Vert_{\imp^-,\kappa,\widetilde{\Omega}_l\backslash\Omega_l}^2.
    \end{align}
    Next we estimate $\Vert{E_l \Vector{w}_{h,l}}\Vert_{\imp^-,\kappa,\widetilde{\Omega}_l\backslash\Omega_l}^2$. Let $\mathcal{N}_h(\widetilde{\Omega}_l\backslash\Omega_l)$ be the set of all edges,faces and
    elements on $\widetilde{\Omega}_l\backslash\Omega_l$. We additionally define ${\mathcal{N}}^{\partial}_{l,1}\subseteq \mathcal{N}_h(\widetilde{\Omega}_l\backslash\Omega_l)$ and ${\mathcal{N}}^{\partial}_{l,2}\subseteq \mathcal{N}_h(\widetilde{\Omega}_{l}\backslash\Omega_l)$ as the set consists of all edges/faces/elements touching $\partial\Omega_l\backslash\partial\Omega$ and that consists of all edges/faces/elements touching $(\partial\widetilde{\Omega}_l\backslash\partial\Omega_l)\cap\partial\Omega$ respectively. Using the inverse estimate of the finite element functions and \eqref{eq:discreteL2Norm}, we have
    \begin{align}\label{eq:whlDisNormM}
        &\quad\Vert{\nabla\times E_l\Vector{w}_{h,l}}\Vert_{0,\widetilde{\Omega}_l\backslash\Omega_l}^2 + \kappa^2\Vert{E_l\Vector{w}_{h,l}}\Vert_{0,\widetilde{\Omega}_l\backslash\Omega_l}^2\cr
        &\lesssim(h^{-2}+\kappa^2)\Vert{ E_l\Vector{w}_{h,l}}\Vert_{0,\widetilde{\Omega}_l\backslash\Omega_l}^2\cr
        &\sim (h^{-2}+\kappa^2) h \sum_{\xi\in\mathcal{N}_h(\widetilde{\Omega}_l\backslash\Omega_l)}\sum_{i=1}^{n_\xi}\vert M_{\xi}^i(E_l \Vector{w}_{h,l})\vert^2\cr
        &\sim (h^{-2}+\kappa^2) h\sum_{\xi\in\mathcal{N}^\partial_{l,1}}\sum_{i=1}^{n_\xi}\vert M_{\xi}^i(\Vector{w}_{h,l})\vert^2\cr
        &\lesssim \big[(\kappa h)^{-1}+{\kappa h}\big]\Vert{\Vector{w}_{h,l}}\Vert_{\imp,\kappa,\Omega_l}^2,
    \end{align}
    where the last inequality follows by applying \eqref{eq:discreteL2Norm} on $\partial\Omega_l\backslash\partial\Omega$. Furthermore, for the estimate of $\Vert{(E_l \Vector{w}_{h,l})_T}\Vert_{0,(\partial\widetilde{\Omega}_l\backslash\partial\Omega_l)\cap\partial\Omega}^2$, by applying \eqref{eq:discreteL2Norm} on $(\partial\widetilde{\Omega}_l\backslash\partial\Omega_l)\cap\partial\Omega$, we can obtain
    \begin{align}\label{eq:whlBdrDisNormM}
        &\quad\kappa\Vert{(E_l \Vector{w}_{h,l})_T}\Vert_{0,(\partial\widetilde{\Omega}_l\backslash\partial\Omega_l)\cap\partial\Omega}^2\cr
        &\sim \kappa\sum_{\xi\in\mathcal{N}^\partial_{l,2}}\sum_{i=1}^{n_\xi}\vert{M_{\xi}^i(E_l \Vector{w}_{h,l})}\vert^2 = \kappa\sum_{\xi\in\mathcal{N}^\partial_{l,1}\cap\mathcal{N}^\partial_{l,2}}\sum_{i=1}^{n_\xi}\vert{M_{\xi}^i(\Vector{w}_{h,l})}\vert^2\cr
        &\leq \kappa\sum_{\xi\in\mathcal{N}^\partial_{l,1}}\sum_{i=1}^{n_\xi}\vert{M_{\xi}^i(\Vector{w}_{h,l})}\vert^2\lesssim \Vert{\Vector{w}_{h,l}}\Vert_{\imp,\kappa,\Omega_l}^2.
    \end{align}
    Combining \eqref{eq:whlDisNormM} and \eqref{eq:whlBdrDisNormM} yields
    \begin{align*}
        \Vert{E_l \Vector{w}_{h,l}}\Vert_{\imp^-,\kappa,\widetilde{\Omega}_l\backslash\Omega_l}^2
        &= \Vert{\nabla\times E_l\Vector{w}_{h,l}}\Vert_{0,\widetilde{\Omega}_l\backslash\Omega_l}^2 + \kappa^2\Vert{E_l\Vector{w}_{h,l}}\Vert_{0,\widetilde{\Omega}_l\backslash\Omega_l}^2\\
        &+\kappa\Vert{(E_l \Vector{w}_{h,l})_T}\Vert_{0,(\partial\widetilde{\Omega}_l\backslash\partial\Omega_l)\cap\partial\Omega}^2\\
        &\lesssim \big[1+\kappa h+(\kappa h)^{-1}\big]\Vert{\Vector{w}_{h,l}}\Vert_{\imp,\kappa,\Omega_l}^2.
    \end{align*}
    Inserting the above equation into \eqref{eq:elwhlSplit} and noting that $\kappa h\lesssim 1 \lesssim (\kappa h)^{-1}$, we have
    \begin{align*}
        \Vert{E_l \Vector{w}_{h,l}}\Vert_{\imp^-,\kappa,\widetilde{\Omega}_l}^2
        &\lesssim \big[1+\kappa h+(\kappa h)^{-1}\big]\Vert{\Vector{w}_{h,l}}\Vert_{\imp,\kappa,\Omega_l}^2\\
        &\lesssim (\kappa h)^{-1}\Vert{\Vector{w}_{h,l}}\Vert_{\imp,\kappa,\Omega_l}^2.
    \end{align*}
    Combining this equation with \eqref{eq:contBoundM}, we finally get
    \begin{equation}\label{eq:item1BoundM}
        \vert{a_{\epsilon}(\Vector{v}_h, E_l \Vector{w}^{\partial}_{h,l})}\vert\lesssim (\kappa h)^{-\frac{1}{2}}\Vert{\Vector{v}_h}\Vert_{\imp^-,\kappa,\widetilde{\Omega}_l}\Vert{\Vector{w}_{h,l}}\Vert_{\imp,\kappa,\Omega_l}.
    \end{equation}

    For the estimate of the second item of the right hand side of \eqref{eq:reformFlM}, from the continuity of $a_{\epsilon,l}(\cdot,\cdot)$, we can obtain
    \begin{align}\label{eq:contBound2M}
        \big|a_{\epsilon,l}(R_l \Vector{v}_h, \Vector{w}_{h,l})\big|\lesssim \Vert{\Vector{v}_h}\Vert_{\imp,\kappa,\Omega_l}\Vert{\Vector{w}_{h,l}}\Vert_{\imp,\kappa,\Omega_l},
    \end{align}
    where
    \begin{equation*}
        \Vert{\Vector{v}_h}\Vert_{\imp,\kappa,\Omega_l}^2 = \Vert{\Vector{v}_h}\Vert_{\imp^-,\kappa,\Omega_l}^2 + \kappa\Vert{\Vector{v}_{h,T}}\Vert_{0,\partial\Omega_l\backslash\partial\Omega}^2.
    \end{equation*}
    Let $\mathcal{N}^\partial_{l,3}$ consist of all edges/faces/elements touching $\partial\Omega_l\backslash\partial\Omega$. Then there holds
    \begin{align*}
        &\quad\kappa\Vert{\Vector{v}_{h,T}}\Vert_{\partial\Omega_l\backslash\partial\Omega}^2\\
        &\lesssim\kappa\sum_{\xi\in\mathcal{N}^\partial_{l,3}}\sum_{i=1}^{n_\xi}\vert{M_{\xi}^i(\Vector{v}_h)}\vert^2\leq \kappa\sum_{\xi\in\mathcal{N}(\Omega_l)}\sum_{i=1}^{n_\xi}\vert{M_{\xi}^i(\Vector{v}_h)}\vert^2\\
        &\lesssim \frac{\kappa}{h}\Vert{\Vector{v}_h}\Vert_{0,\Omega_l}^2\lesssim (\kappa h)^{-1}\Vert{\Vector{v}_h}\Vert_{\imp^-,\kappa,\Omega_l}^2\\
        &\lesssim (\kappa h)^{-1}\Vert{\Vector{v}_h}\Vert_{\imp^-,\kappa,\widetilde{\Omega}_l}^2.
    \end{align*}
    Inserting the above equation into \eqref{eq:contBound2M}, we can immediately get
    \begin{equation}\label{eq:item2BoundM}
        \big|a_{\epsilon,l}(R_l \Vector{v}_h, \Vector{w}_{h,l})\big|\lesssim(\kappa h)^{-\frac{1}{2}} \Vert{\Vector{v}_h}\Vert_{\imp^-,\kappa,\widetilde{\Omega}_l}\Vert{\Vector{w}_{h,l}}\Vert_{\imp,\kappa,\Omega_l}.
    \end{equation}

    Combining \eqref{eq:item1BoundM} and \eqref{eq:item2BoundM} yields
    \begin{equation*}
        \vert{\Vector{F}_l(\Vector{w}_{h,l})}\vert\lesssim (\kappa h)^{-\frac{1}{2}}\Vert{\Vector{v}_h}\Vert_{\imp^-,\kappa,\widetilde{\Omega}_l}\Vert{\Vector{w}_{h,l}}\Vert_{\imp,\kappa,\Omega_l}.
    \end{equation*}
    Then from Corollary \ref{coro:discreteStable}, we can finally derive
    \begin{equation*}
        \Vert{\Vector{v}_{h,l}^\partial}\Vert_{\imp,\kappa,\Omega_l} \lesssim \min\bigg\{ {1+\kappa H_l,\frac{\kappa^2}{\epsilon}}\bigg\} (\kappa h)^{-\frac{1}{2}} \Vert{\Vector{v}_h}\Vert_{\imp^-,\kappa,\widetilde{\Omega}_l},
    \end{equation*}
    which is exactly \eqref{eq:vhpBoundM}.
\end{proof}

\subsection{Convergence analysis of the preconditioner}
Recall that in \eqref{eq:globalProjReformM}, $P^{\rho}_{\epsilon} \Vector{v}_h$ is reformulated as
$$P^{\rho}_{\epsilon} \Vector{v}_h = (I-P^{\rho}_{0})\Vector{v}_{h}^{\partial} + \Vector{v}_h.$$
The following lemma gives a bound of the norm of $(I-P^{\rho}_{0})\Vector{v}_{h}^{\partial}$.
\begin{lemma}
    Under Assumption \ref{assump:finiteOverlap}, there holds
    \begin{equation}\label{eq:vhpProjErrorM}
        \Vert{(I-P^{\rho}_{0})\Vector{v}_h^\partial}\Vert_{\imp,\kappa,\Omega} \lesssim \frac{\rho\kappa^2}{\epsilon}\Big(\sum_{l=1}^{N}\Vert{\Vector{v}_{h,l}^\partial}\Vert_{\imp,\kappa,\Omega_l}^2\Big)^{\frac{1}{2}}.
    \end{equation}
\end{lemma}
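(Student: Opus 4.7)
The plan is a textbook Cé\'a/quasi-optimality argument: because $P^\rho_0$ is the $a_\epsilon$-Galerkin projection onto $V^\rho_{h,0}(\Omega)$, its error is controlled by the best approximation error from $V^\rho_{h,0}(\Omega)$ in the $\Vert\cdot\Vert_{\imp,\kappa,\Omega}$-norm, and for the latter we can simply plug in the explicit candidate furnished by Lemma \ref{lemma:vhpApproxM}.

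First, from the definition \eqref{eq:coarseProj} of $P^\rho_0 = P^\rho_{\epsilon,0}$ (applied to $\Vector{v}_h^\partial$) we obtain the Galerkin orthogonality
\begin{equation*}
    a_\epsilon\bigl((I-P^\rho_0)\Vector{v}_h^\partial,\,\Vector{w}_0\bigr)=0,\qquad\forall\,\Vector{w}_0\in V^\rho_{h,0}(\Omega).
\end{equation*}
Combining this with the coercivity \eqref{eq:coerciveM} and continuity \eqref{eq:continuousM} from Lemma \ref{lemma:contCoerM}, for any $\Vector{w}_0\in V^\rho_{h,0}(\Omega)$ I get
\begin{equation*}
    \frac{\epsilon}{\kappa^2}\,\Vert(I-P^\rho_0)\Vector{v}_h^\partial\Vert_{\imp,\kappa,\Omega}^2
    \lesssim \bigl|a_\epsilon\bigl((I-P^\rho_0)\Vector{v}_h^\partial,\Vector{v}_h^\partial-\Vector{w}_0\bigr)\bigr|
    \lesssim \Vert(I-P^\rho_0)\Vector{v}_h^\partial\Vert_{\imp,\kappa,\Omega}\,\Vert\Vector{v}_h^\partial-\Vector{w}_0\Vert_{\imp,\kappa,\Omega},
\end{equation*}
which after dividing by the common factor yields the quasi-optimal bound
\begin{equation*}
    \Vert(I-P^\rho_0)\Vector{v}_h^\partial\Vert_{\imp,\kappa,\Omega}\lesssim \frac{\kappa^2}{\epsilon}\,\inf_{\Vector{w}_0\in V^\rho_{h,0}(\Omega)}\Vert\Vector{v}_h^\partial-\Vector{w}_0\Vert_{\imp,\kappa,\Omega}.
\end{equation*}

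Second, Lemma \ref{lemma:vhpApproxM} furnishes a concrete element $\Vector{z}_h=\sum_{l=1}^N D_l E_l \Pi^\rho_l \Vector{v}_{h,l}^\partial\in V^\rho_{h,0}(\Omega)$ with
\begin{equation*}
    \Vert\Vector{v}_h^\partial-\Vector{z}_h\Vert_{\imp,\kappa,\Omega}\lesssim \rho\Big(\sum_{l=1}^N\Vert\Vector{v}_{h,l}^\partial\Vert_{\imp,\kappa,\Omega_l}^2\Big)^{1/2}.
\end{equation*}
Substituting $\Vector{w}_0=\Vector{z}_h$ into the quasi-optimal estimate yields exactly \eqref{eq:vhpProjErrorM}.

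There is no real obstacle: the only thing to notice is that the poor coercivity constant $\epsilon/\kappa^2$ (the only available lower bound when $\epsilon\ll\kappa^2$) enters as its reciprocal and produces the $\kappa^2/\epsilon$ prefactor in \eqref{eq:vhpProjErrorM}. This reflects the fact that, at the coarse-space level, I am not yet exploiting the discrete $\inf$-$\sup$ condition of Theorem \ref{thm:discreteInfSupM} — the sharper $(1+\kappa H)$-type bound is reserved for the local estimate in Lemma \ref{lemma:vhpBoundM}, and the two will ultimately be balanced against each other in the subsequent spectral analysis of $P^\rho_\epsilon$.
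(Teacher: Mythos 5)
Your proposal is correct and follows essentially the same route as the paper: both rest on the coercivity bound $\epsilon/\kappa^2$ from Lemma \ref{lemma:contCoerM}, the Galerkin orthogonality coming from \eqref{eq:coarseProj}, the continuity of $a_\epsilon(\cdot,\cdot)$, and then the substitution of the explicit coarse element $\Vector{z}_h$ from Lemma \ref{lemma:vhpApproxM}. The only cosmetic difference is that you first state the quasi-optimality with a generic $\Vector{w}_0$ and then specialize to $\Vector{z}_h$, whereas the paper inserts $\Vector{z}_h$ directly into the orthogonality step.
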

\begin{proof}
    Let $\Vector{z}_h\in V^{\rho}_{h,0}(\Omega)$ be defined as in Lemma \ref{lemma:vhpApproxM}. Then by the continuity and the coercivity of $a_{\epsilon}(\cdot,\cdot)$, i.e. \eqref{eq:continuousM} and \eqref{eq:coerciveM}, and the definition of $P_{\epsilon,0}^\rho$, we have

    \begin{align*}
        \Vert (I-{P^{\rho}_{\epsilon,0}})\Vector{v}_h^\partial \Vert_{\imp,\kappa,\Omega}^2
        & \lesssim \frac{\kappa^2}{\epsilon} \left\vert a_{\epsilon}((I-{P^{\rho}_{\epsilon,0}})\Vector{v}_h^\partial,(I-{P^{\rho}_{\epsilon,0}})\Vector{v}_h^\partial)\right\vert\\
        &= \frac{\kappa^2}{\epsilon} \left\vert a_{\epsilon}((I-{P^{\rho}_{\epsilon,0}})\Vector{v}_h^\partial,\Vector{v}_h^\partial)\right\vert\\
        &= \frac{\kappa^2}{\epsilon} \left\vert a_{\epsilon}((I-{P^{\rho}_{\epsilon,0}})\Vector{v}_h^\partial,\Vector{v}_h^\partial-\Vector{z}_h)\right\vert\\
        & \lesssim \frac{\kappa^2}{\epsilon} \Vert (I-{P^{\rho}_{\epsilon,0}})\Vector{v}_h^\partial \Vert_{\imp,\kappa,\Omega} \Vert \Vector{v}_h^\partial-\Vector{z}_h \Vert_{\imp,\kappa,\Omega},
    \end{align*}
    namely,
    \begin{equation*}
        \Vert (I-{P^{\rho}_{\epsilon,0}})\Vector{v}_h^\partial \Vert_{\imp,\kappa,\Omega}
        \lesssim \frac{\kappa^2}{\epsilon} \Vert \Vector{v}_h^\partial-\Vector{z}_h \Vert_{\imp,\kappa,\Omega}
        \lesssim \frac{\rho\kappa^2}{\epsilon} \left(\sum_{l=1}^{N}\Vert \Vector{v}_{h,l}^\partial \Vert_{\imp,\kappa,\Omega_l}^2\right)^{\frac{1}{2}},
    \end{equation*}
    where the last inequality holds from Lemma \ref{lemma:vhpApproxM}.
\end{proof}

Inserting \eqref{eq:vhpBoundM} into \eqref{eq:vhpProjErrorM} yields further results, leading to the main convergence result. Before giving the final result, we make an assumption on $\epsilon$ and $H$.
\begin{assumption}\label{assump:epdh}
    $\epsilon$ and $H$ satisfy
    \begin{equation}
        \epsilon\sim\kappa^{1+\alpha}\quad \text{and}\quad H\sim\kappa^{-\beta},
    \end{equation}
    where $\alpha$ and $\beta$ satisfy $0\leq \alpha\leq\beta\leq 1$.
\end{assumption}
\begin{theorem}\label{thm:theoryBigepM}
    Let Assumption \ref{assump:meshsize}, Assumption \ref{assump:finiteOverlap} and Assumption \ref{assump:epdh} be satisfied. Define $\sigma = 2-(\alpha+\beta)+\gamma/2$. For any $\tau\in (0, 1)$, there exists a constant $C_0$ independent of $\kappa,\epsilon,h,d,\delta$ (but $C_0$ depends on $\tau$)  such that if $\rho$ satisfies $\rho\kappa^{\sigma}\leq C_0$, the global projection operator $P^{\rho}_{\epsilon}=B^{-1}_{\epsilon}A_{\epsilon}$ processes the following spectral properties:
    \begin{equation}\label{eq:p0Spectrum1M}
        \Vert{P^{\rho}_{\epsilon} \Vector{v}_h}\Vert_{\imp,\kappa,\Omega}\leq (1+\tau)\Vert{\Vector{v}_h}\Vert_{\imp,\kappa,\Omega},\quad\forall \Vector{v}_h\in V_h(\Omega)
    \end{equation}
    and
    \begin{equation}\label{eq:p0Spectrum2M}
        \vert{({\Vector{v}_h},{P^{\rho}_{\epsilon} \Vector{v}_h})_{\imp,\kappa,\Omega}}\vert \geq(1-\tau) \Vert{\Vector{v}_h}\Vert_{\imp,\kappa,\Omega}^2\quad \forall \Vector{v}_h\in V_h(\Omega).
    \end{equation}
\end{theorem}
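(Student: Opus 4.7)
The plan is to leverage the reformulation \eqref{eq:globalProjReformM}, which writes $P_\epsilon^\rho \mathbf{v}_h = \mathbf{v}_h + (I - P_{\epsilon,0}^\rho)\mathbf{v}_h^\partial$. Both spectral bounds \eqref{eq:p0Spectrum1M} and \eqref{eq:p0Spectrum2M} will follow once I establish the single key estimate
\begin{equation*}
    \|(I - P_{\epsilon,0}^\rho)\mathbf{v}_h^\partial\|_{\mathrm{imp},\kappa,\Omega} \leq \tau \|\mathbf{v}_h\|_{\mathrm{imp},\kappa,\Omega}
\end{equation*}
under the hypothesis $\rho\kappa^\sigma \leq C_0$ for a sufficiently small $C_0$. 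Indeed, the triangle inequality immediately yields $\|P_\epsilon^\rho \mathbf{v}_h\|_{\mathrm{imp},\kappa,\Omega} \leq (1+\tau)\|\mathbf{v}_h\|_{\mathrm{imp},\kappa,\Omega}$, and writing $(\mathbf{v}_h, P_\epsilon^\rho \mathbf{v}_h)_{\mathrm{imp},\kappa,\Omega} = \|\mathbf{v}_h\|_{\mathrm{imp},\kappa,\Omega}^2 + (\mathbf{v}_h, (I-P_{\epsilon,0}^\rho)\mathbf{v}_h^\partial)_{\mathrm{imp},\kappa,\Omega}$ together with the Cauchy--Schwarz inequality gives the lower bound \eqref{eq:p0Spectrum2M}.

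To reach the key estimate, I will chain three previous results. First, \eqref{eq:vhpProjErrorM} bounds the coarse error by the sum $\bigl(\sum_l \|\mathbf{v}_{h,l}^\partial\|_{\mathrm{imp},\kappa,\Omega_l}^2\bigr)^{1/2}$ up to the factor $\rho\kappa^2/\epsilon$. Second, Lemma \ref{lemma:vhpBoundM} controls each $\|\mathbf{v}_{h,l}^\partial\|_{\mathrm{imp},\kappa,\Omega_l}$ by $\min\{1+\kappa H_l,\kappa^2/\epsilon\}(\kappa h)^{-1/2}\|\mathbf{v}_h\|_{\mathrm{imp}^-,\kappa,\widetilde\Omega_l}$. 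Third, because each $\widetilde\Omega_l$ is a union of $O(1)$ neighboring subdomains by Assumption \ref{assump:finiteOverlap}, an iterated application of Lemma \ref{lemma:rlStableM} yields
\begin{equation*}
    \sum_{l=1}^N \|\mathbf{v}_h\|_{\mathrm{imp}^-,\kappa,\widetilde\Omega_l}^2 \lesssim \|\mathbf{v}_h\|_{\mathrm{imp},\kappa,\Omega}^2.
\end{equation*}
Combining these three ingredients produces
\begin{equation*}
    \|(I - P_{\epsilon,0}^\rho)\mathbf{v}_h^\partial\|_{\mathrm{imp},\kappa,\Omega} \lesssim \rho\,\frac{\kappa^2}{\epsilon}\,\min\bigl\{1+\kappa H,\tfrac{\kappa^2}{\epsilon}\bigr\}\,(\kappa h)^{-1/2}\,\|\mathbf{v}_h\|_{\mathrm{imp},\kappa,\Omega}.
\end{equation*}

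Finally, I plug in the parameter regime of Assumptions \ref{assump:meshsize} and \ref{assump:epdh}: $\kappa^2/\epsilon \sim \kappa^{1-\alpha}$, $(\kappa h)^{-1/2} \sim \kappa^{\gamma/2}$, and since $\alpha \leq \beta$ we have $1+\kappa H \sim \kappa^{1-\beta} \lesssim \kappa^{1-\alpha}$, so the minimum equals $\kappa^{1-\beta}$. The combined power becomes $(1-\alpha)+(1-\beta)+\gamma/2 = \sigma$, so the prefactor is $\lesssim \rho\kappa^\sigma$. Choosing $C_0$ small enough so that the hidden constant times $C_0$ is at most $\tau$ completes the argument. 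The only conceptual subtlety is that the reformulation step must be justified: specifically that $\mathbf{v}_{h,l}^\partial = P_{\epsilon,l}\mathbf{v}_h - R_l\mathbf{v}_h$ indeed lies in the discrete Maxwell-harmonic space $V_h^\partial(\Omega_l)$, which follows because $a_{\epsilon,l}(\mathbf{v}_{h,l}^\partial,\mathbf{w}_{h,l})$ vanishes for every $\mathbf{w}_{h,l}$ supported in the interior of $\Omega_l$; this is the standard observation underpinning \eqref{eq:globalProjReformM} and requires no new work. The principal computational hurdle is bookkeeping the exponents of $\kappa$ so that the minimum is taken correctly, but no additional analytical machinery beyond the lemmas already established is needed.
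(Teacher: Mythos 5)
Your proposal is correct and follows essentially the same route as the paper: reformulate $P_\epsilon^\rho \Vector{v}_h=\Vector{v}_h+(I-P_{\epsilon,0}^\rho)\Vector{v}_h^\partial$, chain \eqref{eq:vhpProjErrorM}, Lemma \ref{lemma:vhpBoundM} and Lemma \ref{lemma:rlStableM}, identify the prefactor as $\rho\kappa^{(1-\alpha)+(1-\beta)+\gamma/2}=\rho\kappa^\sigma$ using $\alpha\le\beta$ to resolve the minimum, and conclude by the triangle and Cauchy--Schwarz inequalities. Your explicit remark on passing from $\sum_l\Vert\Vector{v}_h\Vert_{\imp^-,\kappa,\widetilde\Omega_l}^2$ to $\Vert\Vector{v}_h\Vert_{\imp,\kappa,\Omega}^2$ via finite overlap is a point the paper glosses over, but it is not a different argument.
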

\begin{proof}
    Under Assumption \ref{assump:epdh}, $\kappa H_l\sim\kappa^{1-\beta}$ and $\kappa^2/\epsilon=\kappa^{1-\alpha}\gtrsim\kappa^{1-\beta}$. Then the factor of the right hand side of \eqref{eq:vhpBoundM} writes
    \begin{equation*}
        \min\bigg\{ 1+\kappa H_l,\frac{\kappa^2}{\epsilon}\bigg\} (\kappa h)^{-\frac{1}{2}}\sim \kappa^{1-\beta+\frac{\gamma}{2}},
    \end{equation*}
    namely
    \begin{equation*}
        \Vert{\Vector{v}_{h,l}^\partial}\Vert_{\imp,\kappa,\Omega_l}\lesssim \kappa^{1-\beta+\frac{\gamma}{2}}\Vert{\Vector{v}_h}\Vert_{\imp^-,\kappa,\widetilde{\Omega}_l}.
    \end{equation*}
    Inserting the above equation into \eqref{eq:vhpProjErrorM} yields
    \begin{align*}
        \Vert{(I-P^{\rho}_{0})\Vector{v}_h^\partial}\Vert_{\imp,\kappa,\Omega}
        &\lesssim \rho\kappa^{2-\alpha+\gamma/2-\beta}\bigg(\sum_{l=1}^{N}\Vert{\Vector{v}_{h}}\Vert_{\imp^-,\kappa,\Omega_l}^2\bigg)^{\frac{1}{2}}\\
        &\lesssim \rho\kappa^{\sigma}\Vert{\Vector{v}_{h}}\Vert_{\imp,\kappa,\Omega},
    \end{align*}
    where the last inequality follows from Lemma \ref{lemma:rlStableM} and the definition of $\sigma$. Hence for any $\tau\in (0, 1)$, there exists a constant $C_0$ independent of $\kappa,\epsilon,h,d,\delta$
    such that when $\rho\kappa^{\sigma}\leq C_0$, we have
    \begin{equation*}
        \Vert{(I-P^{\rho}_{0})\Vector{v}_h^\partial}\Vert_{\imp,\kappa,\Omega}\leq \tau\Vert{\Vector{v}_h}\Vert_{\imp,\kappa,\Omega}.
    \end{equation*}
    Therefore, from \eqref{eq:globalProjReformM} we can obtain
    \begin{align*}
        \Vert{P^{\rho}_{\epsilon}\Vector{v}_h}\Vert_{\imp,\kappa,\Omega}
        &\leq \Vert{(I-P^{\rho}_{0})\Vector{v}_h^\partial}\Vert_{\imp,\kappa,\Omega} + \Vert{\Vector{v}_h}\Vert_{\imp,\kappa,\Omega}\\
        &\leq (1+\tau)\Vert{\Vector{v}_h}\Vert_{\imp,\kappa,\Omega}.
    \end{align*}
    On the other hand, there holds
    \begin{align*}
        \big\vert({{\Vector{v}_h},{P^\rho_\epsilon\Vector{v}_h}})_{\imp,\kappa,\Omega}\big\vert
        &= \big\vert{\Vert{\Vector{v}_h}\Vert_{\imp,\kappa,\Omega}^2+ ({\Vector{v}_h},{(I-P_\epsilon^\rho)\Vector{v}_h^\partial})_{\imp,\kappa,\Omega}}\big\vert\\
        &\geq \Vert{\Vector{v}_h}\Vert_{\imp,\kappa,\Omega}^2 - \Vert{\Vector{v}_h}\Vert_{\imp,\kappa,\Omega}\Vert{(I-P_\epsilon^\rho)\Vector{v}_h^\partial}\Vert_{\imp,\kappa,\Omega}\\
        &\geq (1-\tau)\Vert{\Vector{v}_h}\Vert_{\imp,\kappa,\Omega}^2.
    \end{align*}
\end{proof}

To conclude this section, we will derive the convergence result of GMRES. Let $\mathcal{A}_\epsilon$ be the stiffness matrix of the system \eqref{eq:discreteMVF} and $\mathcal{B}_\epsilon^{-1}$ be the matrix form of the preconditioner. It's easy to check that the matrix form of $P_\epsilon^\rho$ is exactly $\mathcal{B}_\epsilon^{-1} \mathcal{A}_\epsilon$. Further, denote by $\mathcal{M}$ the matrix induced by $(\cdot,\cdot)_{\imp,\kappa,\Omega}$ on $V_h(\Omega)$. Denote by $\Vert\cdot\Vert_\mathcal{M}$ and $(\cdot,\cdot)_\mathcal{M}$ the matrix norm and corresponding inner product induced by $\mathcal{M}$ respectively, i.e.,
\begin{equation*}
    (\Vector{v},\Vector{w})_\mathcal{M}\coloneqq \Vector{w}^t\mathcal{M}\Vector{v},\quad \forall \Vector{v},\Vector{w}\in \mathbb{C}^n
\end{equation*}
and
\begin{equation*}
    \Vert\Vector{v}\Vert_\mathcal{M} \coloneqq  \sqrt{(\Vector{v},\Vector{v})_\mathcal{M}},\quad \forall \Vector{v}\in \mathbb{C}^n,
\end{equation*}
where $n=\mathop{\mathrm{dim}}V_h(\Omega)$.

Then from Theorem \ref{thm:theoryBigepM} and the basic convergence result of GMRES \cite{beckermann2005some}, we can derive the following results. We omit the detail proofs since they are the same as those in \cite[Section 5]{bonazzoli2019domain}.
\begin{theorem}\label{thm:spectral}
    Under the assumptions made in Theorem \ref{thm:theoryBigepM}, there holds
    \begin{equation*}
        \Vert \mathcal{B}_\epsilon^{-1} \mathcal{A}_\epsilon\Vert_{\mathcal{M}} \leq 1+\tau
    \end{equation*}
    and
    \begin{equation*}
        \Big\vert\big(\Vector{v},\mathcal{B}_\epsilon^{-1} \mathcal{A} \Vector{v}\big)_{\mathcal{M}}\Big\vert \geq(1-\tau) \Vert \Vector{v}\Vert_{\mathcal{M}}^2,\quad \forall \Vector{v}\in \mathbb{C}^n.
    \end{equation*}
\end{theorem}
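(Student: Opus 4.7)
The plan is to translate the operator-level spectral estimates already established in Theorem \ref{thm:theoryBigepM} into matrix-level statements via the canonical isomorphism between $V_h(\Omega)$ and $\mathbb{C}^n$. Since Theorem \ref{thm:theoryBigepM} has already done all of the analytic work (stability, approximation by the coarse space, bounds on $\Vector{v}_{h,l}^\partial$, etc.), what remains here is essentially an identification of quantities.

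First I would fix a basis $\{\boldsymbol{\phi}_i\}_{i=1}^n$ of $V_h(\Omega)$ (the standard N\'ed\'elec basis will do) and identify each $\Vector{v}_h=\sum_i v_i\boldsymbol{\phi}_i\in V_h(\Omega)$ with its coefficient vector $\Vector{v}=(v_i)\in\mathbb{C}^n$. By the very definition of $\mathcal{M}$, the matrix $\mathcal{M}$ is the Gram matrix of $(\cdot,\cdot)_{\imp,\kappa,\Omega}$ in this basis, so
\begin{equation*}
(\Vector{v},\Vector{w})_{\mathcal{M}} = (\Vector{v}_h,\Vector{w}_h)_{\imp,\kappa,\Omega},\qquad \Vert \Vector{v}\Vert_{\mathcal{M}} = \Vert\Vector{v}_h\Vert_{\imp,\kappa,\Omega}.
\end{equation*}
Likewise, $\mathcal{A}_\epsilon$ is the matrix of the sesquilinear form $a_\epsilon(\cdot,\cdot)$ (equivalently, the matrix of $A_\epsilon$ composed with the Riesz duality induced by the basis), and $\mathcal{B}_\epsilon^{-1}$ is the matrix form of $B_\epsilon^{-1}$. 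Consequently $\mathcal{B}_\epsilon^{-1}\mathcal{A}_\epsilon$ is the matrix representation of the operator $B_\epsilon^{-1}A_\epsilon=P_\epsilon^\rho$; that is, the coefficient vector of $P_\epsilon^\rho\Vector{v}_h$ is exactly $\mathcal{B}_\epsilon^{-1}\mathcal{A}_\epsilon\Vector{v}$.

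With these identifications in hand, the two inequalities of the theorem reduce directly to \eqref{eq:p0Spectrum1M} and \eqref{eq:p0Spectrum2M}. For the first, using \eqref{eq:p0Spectrum1M},
\begin{equation*}
\Vert \mathcal{B}_\epsilon^{-1}\mathcal{A}_\epsilon\Vert_{\mathcal{M}}
=\sup_{\Vector{v}\neq 0}\frac{\Vert\mathcal{B}_\epsilon^{-1}\mathcal{A}_\epsilon\Vector{v}\Vert_{\mathcal{M}}}{\Vert\Vector{v}\Vert_{\mathcal{M}}}
=\sup_{\Vector{v}_h\neq 0}\frac{\Vert P_\epsilon^\rho\Vector{v}_h\Vert_{\imp,\kappa,\Omega}}{\Vert\Vector{v}_h\Vert_{\imp,\kappa,\Omega}}\leq 1+\tau.
\end{equation*}
For the second, \eqref{eq:p0Spectrum2M} yields
\begin{equation*}
\bigl|(\Vector{v},\mathcal{B}_\epsilon^{-1}\mathcal{A}_\epsilon\Vector{v})_{\mathcal{M}}\bigr|
=\bigl|(\Vector{v}_h,P_\epsilon^\rho\Vector{v}_h)_{\imp,\kappa,\Omega}\bigr|
\geq(1-\tau)\Vert\Vector{v}_h\Vert_{\imp,\kappa,\Omega}^2
=(1-\tau)\Vert\Vector{v}\Vert_{\mathcal{M}}^2
\end{equation*}
for every $\Vector{v}\in\mathbb{C}^n$.

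I do not foresee any real obstacle: the only thing to check carefully is that the matrix $\mathcal{B}_\epsilon^{-1}$ defined from the preconditioner \eqref{eq:2levelPreM} does represent $B_\epsilon^{-1}$ in the chosen basis (so that multiplication by $\mathcal{B}_\epsilon^{-1}\mathcal{A}_\epsilon$ on coefficient vectors corresponds to applying $P_\epsilon^\rho$ to the associated finite element function). Once this bookkeeping is verified, the spectral bounds of Theorem \ref{thm:theoryBigepM} transfer verbatim to the matrix setting, which is exactly what the paper indicates by referring to the analogous translation carried out in \cite[Section 5]{bonazzoli2019domain}. The ensuing GMRES convergence rate then follows from the Elman-type estimate of \cite{beckermann2005some} applied to $\mathcal{B}_\epsilon^{-1}\mathcal{A}_\epsilon$ with the $\mathcal{M}$-inner product.
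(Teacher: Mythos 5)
Your proposal is correct and is exactly the argument the paper intends: it identifies $\mathcal{M}$ as the Gram matrix of $(\cdot,\cdot)_{\imp,\kappa,\Omega}$, notes that $\mathcal{B}_\epsilon^{-1}\mathcal{A}_\epsilon$ represents $P_\epsilon^\rho$, and transfers \eqref{eq:p0Spectrum1M}--\eqref{eq:p0Spectrum2M} verbatim, which is the same bookkeeping the paper delegates to \cite[Section 5]{bonazzoli2019domain}.
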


\begin{corollary}[GMRES convergence for left-preconditioning]\label{coro:gmres}
    Let the assumptions made in Theorem \ref{thm:spectral} be satisfied. Consider the weighted GMRES applied to $\mathcal{B}_\epsilon^{-1} \mathcal{A}_\epsilon$, where the residual is minimized by $\Vert\cdot\Vert_\mathcal{M}$. Then the $m$-th iterate of GMRES, $\Vector{r}^m$, satisfies
    \begin{equation*}
        \frac{\Vert \Vector{r}^m\Vert_\mathcal{M}}{\Vert\Vector{r}^0\Vert_\mathcal{M}}\lesssim \bigg(\frac{2\sqrt{\tau}}{1+\tau}\bigg)^m.
    \end{equation*}
\end{corollary}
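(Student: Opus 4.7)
The plan is to derive the GMRES convergence estimate directly from the two spectral bounds furnished by Theorem \ref{thm:spectral}. Specifically, I would reinterpret the bounds
\[
\Vert \mathcal{B}_\epsilon^{-1} \mathcal{A}_\epsilon\Vert_{\mathcal{M}} \leq 1+\tau,
\qquad
\big\vert(\Vector{v},\mathcal{B}_\epsilon^{-1} \mathcal{A}_\epsilon \Vector{v})_{\mathcal{M}}\big\vert \geq (1-\tau)\Vert\Vector{v}\Vert_{\mathcal{M}}^2
\]
as statements about, respectively, the operator norm and the numerical range (field of values) of $T \coloneqq \mathcal{B}_\epsilon^{-1}\mathcal{A}_\epsilon$ with respect to the $\mathcal{M}$-inner product. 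Since $\mathcal{M}$ is Hermitian positive definite (this is immediate from the definition of $(\cdot,\cdot)_{\imp,\kappa,\Omega}$), the weighted Elman/Beckermann--Goreinov--Reichel estimate for GMRES applies verbatim: whenever the numerical range $W_{\mathcal{M}}(T)$ is contained in a disc that is strictly separated from the origin, the $m$-th weighted minimal residual decays linearly at a rate $\sin\beta$, where $\cos\beta$ quantifies this separation.

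The concrete computation is then essentially algebraic. Combining the two bounds above yields
\[
\cos\beta \;=\; \frac{1-\tau}{1+\tau},
\]
so that
\[
\sin\beta \;=\; \sqrt{1-\cos^2\beta} \;=\; \frac{\sqrt{(1+\tau)^2-(1-\tau)^2}}{1+\tau} \;=\; \frac{2\sqrt{\tau}}{1+\tau},
\]
which is precisely the contraction factor appearing in the statement. Invoking the weighted GMRES convergence theorem of \cite{beckermann2005some} on $T$ with the inner product $(\cdot,\cdot)_{\mathcal{M}}$ then produces the claimed bound on $\Vert \Vector{r}^m\Vert_{\mathcal{M}}/\Vert\Vector{r}^0\Vert_{\mathcal{M}}$ after $m$ iterations.

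The only subtle point, and hence the place I would be most careful, is ensuring that the GMRES framework is invoked in the correct (weighted) inner product: the residual minimization property must be taken with respect to $\Vert\cdot\Vert_{\mathcal{M}}$ rather than the Euclidean norm, and Theorem \ref{thm:spectral} has been stated with exactly this pairing in mind. Since the very same chain of reasoning---translating a field-of-values bound for the preconditioned operator into a weighted GMRES convergence rate---was spelled out in detail for a closely analogous Helmholtz preconditioner in \cite[Section 5]{bonazzoli2019domain}, I would simply cite that reference to close out the routine parts of the argument rather than reproducing them.
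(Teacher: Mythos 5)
Your proposal is correct and follows exactly the route the paper takes: the paper omits the details and simply invokes the Elman-type field-of-values estimate from \cite{beckermann2005some} together with the argument of \cite[Section 5]{bonazzoli2019domain}, which is precisely the computation $\cos\beta=(1-\tau)/(1+\tau)$, $\sin\beta=2\sqrt{\tau}/(1+\tau)$ that you carry out. Your write-up in fact supplies slightly more detail than the paper's one-line justification.
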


\begin{remark}
    Note that Corollary \ref{coro:gmres} only provides the convergence result of weighted GMRES. In \cite[Corollary 5.8]{gong2021domain}, the authors provide the convergence result of standard GMRES with Euclidean inner product, showing that the extra iterations should be paid is proportional to $\log_2(\kappa)/p$. However, there is very little difference between the results with weighted GMRES and standard GMRES from the numerical results (see \cite{graham2017domain,graham2020domain,gong2021domain,bonazzoli2019domain}). Hence we only test standard GMRES in the numerical experiments in Section \ref{sec:experiments}.
\end{remark}

\section{An economical preconditioner}\label{sec:economical}
When constructing the spectral coarse space introduced in Section \ref{sec:preconditioner}, the main cost comes from solving a series of eigenproblems. In this section, we introduce an economical coarse space to avoid solving eigenproblems without decreasing the efficiency of the preconditioner. In \cite{hu2024novel}, an economical coarse space is constructed via a coarse finite element space on $[0,1]$ and a special map from $L^2([0,1])$ to $L^2(\partial\Omega_l)$.
However such approach is not practical in the current situation of three dimensions, where the boundary of a subdomain consists of many irregular polygons and a desired map is hard to build. In this section, we will
construct an economical coarse space by the spherical harmonic functions.

From the convergence analysis in Section \ref{sec:convergence}, we know that functions in the local discrete Maxwell-harmonic space $V_h^\partial(\Omega_l)$ can be well approximated by functions in the local coarse space if the tolerance $\rho$ is chosen properly (see Lemma \ref{lemma:eigenEstimate}). Besides, a local discrete Maxwell-harmonic function $\Vector{v}_h\in V_h^\partial(\Omega_l)$ is fully determined by an ``impedance'' data $\Vector{\lambda}_h\in V_h(\partial\Omega_l\backslash\partial\Omega)$:
\begin{equation*}
    a_{\epsilon,l}(\Vector{v}_h,\Vector{w}_h)=\langle \Vector{\lambda}_h,\Vector{w}_{h}\rangle_{\partial\Omega_l\backslash\partial\Omega}, \quad\forall \Vector{w}_h\in V_h(\Omega_l).
\end{equation*}
Therefore we can directly build a space spanned by some special functions on $\partial\Omega_l\backslash\partial\Omega$ and derive a local coarse space by applying $\mathcal{H}_{\epsilon,l}$. Such local coarse space should be able to approximate $V_h^\partial(\Omega_l)$ if the special functions on $\partial\Omega_l\backslash\partial\Omega$ are chosen properly.

We will use the vector spherical harmonics to construct the economical coarse space \cite{atkinson2012spherical,barrera1985vector}. Let $\mathbb{S}^2$ be the unit sphere in $\mathbb{R}^3$. Denote the spherical harmonics on $\mathbb{S}^2$ by $Y_j^k,0\leq|k|\leq j$. Then $\{Y_j^k\}_{0\leq|k|\leq j}$ forms an orthogonal basis of $L^2(\mathbb{S}^2)$\cite{atkinson2012spherical}. Further, based on spherical harmonics $Y_j^k$, vector spherical harmonics could be defined by
\begin{equation*}
    \Vector{Y}_{j,1}^k = Y_j^k\Vector{e}_r,\quad\Vector{Y}_{j,2}^k = \nabla_S Y_j^k,\quad\Vector{Y}_{j,3}^k = \nabla_S Y_j^k\times\Vector{e}_r,
\end{equation*}
where $\nabla_S$ is the differential operator on $\mathbb{S}^2$ and $\Vector{e}_r$ is the unit vector pointing to $(x,y,z)\in \mathbb{S}^2$. Then $\big\{\Vector{Y}_{j,1}^k,\Vector{Y}_{j,2}^k,\Vector{Y}_{j,3}^k\big\}_{0\leq \vert k\vert \leq j}$ forms an orthogonal basis of $L^2(\mathbb{S}^3)^3$ and specially, $\big\{\Vector{Y}_{j,2}^k,\Vector{Y}_{j,3}^k\big\}_{0\leq \vert k\vert \leq j}$ forms an orthogonal basis of $L^2_T(\mathbb{S}^3)$\cite{barrera1985vector}.

Assume that $\Omega_l$ is star-shaped with a ball $B_r(\Vector{y})$, then we can define a homeomorphism $\Phi:\Omega_l\rightarrow B_1(\Vector{y})$ by
\begin{equation*}
    \Phi(\Vector{x}) =
    \left\lbrace
    \begin{aligned}
        &\Vector{x}_0 + \frac{\vert\Vector{x}-\Vector{y}\vert}{\vert\widetilde{\Vector{x}}-\Vector{y}\vert^2}(\widetilde{\Vector{x}}-\Vector{y}), && \text{if }\Vector{x}\neq\Vector{y}\\
        &\Vector{0}, && \text{if }\Vector{x}=\Vector{y}.
    \end{aligned}
    \right.
\end{equation*}
where $\widetilde{\Vector{x}}$ is the intersection of the ray $\overrightarrow{\Vector{y}\Vector{x}}$ and $\partial\Omega_l$. It's clear that this map is bijective and bicontinuous from $\Omega_l$ to $B_1(\Vector{y})$ and from $\partial\Omega_l$ to $\partial B_1(\Vector{y})$ since $\Omega_l$ is star-shaped. Similar homeomorphism is used to build the stability estimate of time-harmonic Maxwell equations on polyhedral domains \cite{hiptmair2011stability}.

For a given integer $\mu\geq 0$, define
\begin{equation*}
    V^\mu(\partial B_1(\Vector{y})) \coloneqq \mathop{\rm span}\big\{\Vector{Y}^k_{2,j}(\Vector{x}-\Vector{y}),\Vector{Y}^k_{3,j}(\Vector{x}-\Vector{y}):~0\leq\vert k\vert\leq j,0\leq j\leq\mu\big\}.
\end{equation*}
With the help of $\Phi$, we can pull the functions in $V^\mu(\partial B_1(\Vector{y}))$ back to those on $\partial\Omega_l$ and define
\begin{equation*}
    V^\mu(\partial\Omega_l) \coloneqq \bigg\{\nabla\Psi^{-t}\widehat{\Vector{\lambda}}:~\widehat{\Vector{\lambda}}\in V^\mu(\partial B_1(\Vector{y}))\bigg\}.
\end{equation*}
Finally, the economical local coarse space is defined by
\begin{equation*}
    V_{h,0}^\mu(\Omega_l) \coloneqq \bigg\{\mathcal{H}_{\epsilon,l}(\Vector{\lambda}_h):~\Vector{\lambda}_h\in V_h^\mu(\partial\Omega_l\backslash\partial\Omega)\bigg\},
\end{equation*}
where $V_h^\mu(\partial\Omega_l\backslash\partial\Omega)\coloneqq \Vector{r}_h(V_\mu(\partial\Omega_l)|_{\Gamma_l})$. Further, the economical coarse space is defined as
\begin{equation*}
    V_{h,0}^\mu(\Omega)\coloneqq\bigg\{\Vector{v}_h=\sum_{l=1}^{N}D_lE_l\Vector{v}_{l}:~\Vector{v}_{l}\in V_{h,0}^\mu(\Omega_l)\bigg\}
\end{equation*}
and the economical preconditioner could be built in the same way as in Section \ref{sec:preconditioner}.

\section{Numerical experiments}\label{sec:experiments}
In this section we give some numerical results to illustrate the efficiency of the preconditioner introduced in Section \ref{sec:preconditioner} and the economical variant introduced in Section \ref{sec:economical}.

We choose $\Omega$ as the unit cube $[0,1]^3$. Throughout this section, we make use of standard $p$-order N\'ed\'elec finite element approximation based on the structured tetrahedral grid of size $h\approx\kappa^{-\frac{2p+1}{p}}$ (which can remove the pollution effect) or with a fixed number $n_{ppw}$ of grid points per wavelength (usually used in engineering). As for the domain decomposition, we make use of a uniform decomposition into $N$ cubic non-overlapping subdomains and then extend them by adding several layers of fine-mesh elements to obtain overlapping subdomains. We call the case {\em minimal overlap} if only one layer of elements is added ($\delta\sim h$). Besides, the term {\em generous overlap} refers to $\delta\sim H$. If not specially mentioned, we utilize {\em generous overlap} with $\delta\approx H/6$ in this section.

Standard GMRES with zero initial guess is used. The stopping criterion is based on a reduction of the relative residual by $10^{-6}$ and the maximum number of iterations allowed is 1000.

The implementation is mainly based on MFEM\cite{anderson2021mfem}, which is a free, lightweight, scalable C\texttt{++} library for finite element methods. Moreover, we use MUMPS\cite{amestoy2001fully}, which is integrated in PETSc\cite{balay2023petsc}, both as the subdomain solver and the coarse solver. When it comes to the construction of the spectral coarse spaces, multiple generalized eigenvalue problems are solved, and SLEPc\cite{hernandez2005slepc} is used to do these computations. The numerical experiments in this section are done on the high performance computers of State Key Laboratory of Scientific and Engineering Computing, CAS.

\subsection{Model with constant wavenumber} In this subsection, we consider the simulation of a model with constant wavenumber. $\Vector{J}$ and $\Vector{g}$ in \eqref{eq:shiftMaxwell} is
\begin{align*}
    &\Vector{J} = \nabla\times(\nabla\times \Vector{E}_{ex})-\kappa^2\Vector{E}_{ex},\\
    &\Vector{g} = (\nabla\times\Vector{E}_{ex})\times\Vector{n}-\mathrm{i}\kappa\Vector{E}_{ex,T},
\end{align*}
where
\begin{equation*}
    \Vector{E}_{ex} =
    \begin{pmatrix}
        xz\sin(\kappa y) + \mathrm{i}yz\cos(\kappa x)\\
        -z\sin(\kappa y) - \mathrm{i}z\sin(\kappa x)\\
        xy + \mathrm{i}xy
    \end{pmatrix}.
\end{equation*}

In Section \ref{sec:convergence}, the requirement on the tolerance $\rho$ ensuring convergence is proposed (see Theorem \ref{thm:theoryBigepM}). We make the first experiment to verify the theory. In this experiment we choose $\epsilon=\kappa$ which is meaningful in application and $\rho\sim\kappa^{\beta-2-\frac{\gamma}{2}}$ by Theorem \ref{thm:theoryBigepM}. As mentioned before, the mesh size $h\approx\kappa^{-\frac{2p+1}{2p}}$ where $p=1,2$, implying that $\gamma = \frac{1}{2p}$. Moreover, generous overlap is utilized. The GMRES iteration counts, dimension of coarse space and maximal size of local problems for this case are showed in Table \ref{tab:theoryCheckM}.

\begin{table}[H]
    \caption{\rm GMRES iteration counts (top of the cell), dimension of the coarse space (middle of the cell) and maximal size of local problems (bottom of the cell)}
    \label{tab:theoryCheckM}
    \centering
    \small
    \begin{tabular}{ccccccccc}
        \toprule
        & \multicolumn{4}{c}{linear FE approximation} & \multicolumn{4}{c}{quatratic FE approximation} \\
        \cmidrule(r){2-5} \cmidrule(r){6-9}
        $\kappa$ & \#dof$\backslash\beta$ & 0.8 & 0.7 & 0.6 & \#dof$\backslash\beta$ & 0.8 & 0.7 & 0.6 \\
        \midrule
        $4\pi$ & 656,235 & 32 & 13 & 8 & 542,736 & 26 & 13 & 8 \\
        & & 126,944 & 74,829 & 73138 & & 78,474 & 98,356 & 86,615\\
        & & 3,956 & 12,312 & 15721 & & 4,882 & 8,532 & 13,610\\
        \\
        $5\pi$ & 1,703,078 & 14 & 7 & 5 & 1,161,074 & 10 & 8 & 7 \\
        & & 270,482 & 228,402 & 148,753 & & 288,238 & 233,192 & 139,027\\
        & & 9,425 & 15,721 & 47,587 & & 8,532 & 13,610 & 49,198\\
        \\
        $6\pi$ & 3,920,338 & 6 & 5 & 4 & 2,299,986 & 10 & 6 & 5 \\
        & & 640,464 & 528,684 & 351,906 &  & 573,613 & 484,908 & 348,073\\
        & & 15,721 & 33,743 & 55,712 & & 8,532 & 13,610 & 49,198\\
        \bottomrule
    \end{tabular}
\end{table}

From Table \ref{tab:theoryCheckM}, we can clearly see that the number of iterations keeps bounded as $\kappa$ increases for each $\beta$, which verifies the theory. Actually, with such choice of tolerance $\rho$, the number of iterations decreases significantly as $\kappa$ grows. This phenomenon implies that the choice of tolerance $\rho$ in the theory is not optimal, leading to a dramatic increase in the dimension of the coarse space. Besides, the results in Table \ref{tab:theoryCheckM} show that the preconditioner shows similar efficiency for both finite element approximations. We will utilize quadratic approximation in the remaining numerical experiments since it exhibits good performance for solving high-frequency problems.

Constructing the spectral coarse space is costly since a series of generalized eigenproblems should be solved. In Section \ref{sec:economical}, we proposed an economical variant to avoid solving eigenproblems, which significantly saves the computational cost. The next experiment is designed to show the efficiency of the economical preconditioner. The key parameter of the economical preconditioner is $\mu$ (see Section \ref{sec:economical} for details). Intuitively, when $H$ and $\kappa$ grow, the parameter $\mu$ should increase so that each local coarse space contains more basis functions. We choose $\mu=\kappa^{1-\beta/2}$ and list the results in Table \ref{tab:ecoM}.

\begin{table}[H]
    \caption{\rm GMRES iteration counts (top of the cell), dimension of the coarse space (middle of the cell) and maximal size of local problems (bottom of the cell) for the economical preconditioner with $\mu=\kappa^{1-\beta/2}$}
    \label{tab:ecoM}
    \small
    \begin{center}
        \begin{tabular}{cccccccc}
            \toprule
            & & \multicolumn{3}{c}{$\epsilon=\kappa$} & \multicolumn{3}{c}{$\epsilon=0$}\\
            \cmidrule(r){3-5} \cmidrule(r){6-8}
            $\kappa$ & \#dof$\backslash\beta$ & 0.7 & 0.6 & 0.5 & 0.7 & 0.6 & 0.5\\
            \midrule
            $6\pi$ & 2,299,986 & 20 & 11 & 10 & 21 & 11 & 10\\
            & & 64,512 & 34,560 & 12,672 & 64,512 & 34,560 & 12,672\\
            & & 13,610 & 49,198 & 103,612 & 13,610 & 49,198 & 103,612\\
            \\
            $8\pi$ & 6,767,824 & 26 & 10 & 9 & 27 & 12 & 9\\
            & & 160,000 & 82,320 & 35,750 & 160,000 & 82,320 & 35,750\\
            & & 20,344 & 64,464 & 213,396 & 20,344 & 64,464 & 213,396\\
            \\
            $10\pi$ & 15,563,236 & 14 & 12 & 9 & 15 & 13 & 10\\
            & & 263,538 & 146,432 & 84,240 & 263,538 & 146,432 & 84,240\\
            & & 49,198 & 103,612 & 252,422 & 49,198 & 103,612 & 252,422\\
            \bottomrule
        \end{tabular}
    \end{center}
\end{table}

It can be seen in Table \ref{tab:ecoM} that there is a mild increase in the iteration counts as $\kappa$ grows, especially when $\beta=0.5, 0.6$. When $\beta=0.7$, the iteration counts increase gradually as $\kappa$ grows from $4\pi$ to $8\pi$ and decrease steeply as $\kappa$ changes to $10\pi$. This is because in the actual computation the ratio $\delta/H$ exhibits periodic variations. In each cycle, the ratio $\delta/H$ shows a gradually increase followed by a sudden increase as $\kappa$ grows. Further, the results for the case with $\epsilon=0$ indicate that the economical preconditioner performs well in solving the pure problems.

At the end of this section, we compare the performance of several preconditioners. For convenience, we use ECS to denote the economical preconditioner, WASI to denote the one-level preconditioner introduced in Subsection \ref{subsec:1level}, and GCS to denote the two-level preconditioner with grid coarse space (see \cite{bonazzoli2019domain} for example). The results are listed in Table \ref{tab:comp1M}.

It's well known that GCS processes robustness only if $\kappa H = O(1)$ \cite{bonazzoli2019domain}. The results in Table \ref{tab:comp1M} show that if this condition is not satisfied, GCS is less effective and even the introduction of grid coarse space doesn't gain any benefit. The iteration counts of ECS are significantly less than those of WASI and GCS. Moreover, there is at most a slight increase in the iteration counts for ECS with both generous overlap and minimal overlap.

\begin{table}[H]
    \caption{\rm GMRES iteration counts
    for three preconditioners}
    \label{tab:comp1M}
    \small
    \begin{center}
        \begin{tabular}{cc*{6}{p{18pt}<{\centering}}}
            \toprule
            & & \multicolumn{3}{c}{ Generous overlap} & \multicolumn{3}{c}{ Minimal overlap}\\
            \cmidrule(r){3-5} \cmidrule(r){6-8}
            & $\kappa\backslash\beta$ & 0.7 & 0.6 & 0.5 & 0.7 & 0.6 & 0.5\\
            \midrule
            \multirow{3}{*}{ECS}& $6\pi$ & 21 & 11 & 10 & 20 & 18 & 18 \\
            & $8\pi$ & 27 & 12 & 9 & 26 & 20 & 20 \\
            & $10\pi$ & 15 & 13 & 10 & 26 & 23 & 22 \\
            \\
            \multirow{3}{*}{WASI}& $6\pi$ & 77 & 40 & 28 & 77 & 62 & 43 \\
            & $8\pi$ & 98 & 46 & 29 & 98 & 73 & 54 \\
            & $10\pi$ & 68 & 53 & 35 & 110 & 86 & 66 \\
            \\
            \multirow{3}{*}{GCS}& $6\pi$ & 67 & 36 & 31 & 67 & 62 & 43 \\
            & $8\pi$ & 72 & 21 & 35 & 72 & 73 & 54 \\
            & $10\pi$ & 68 & 75 & 58 & 110 & 86 & 66 \\
            \bottomrule
        \end{tabular}
    \end{center}
\end{table}

\subsection{Electric dipole in layered media}
In this subsection, we consider the computation of the field due to an electric dipole in layered media, which is used to illustrate the performance of several plane wave methods \cite{hu2014plane,huttunen2007solving}. Specifically, let $\Omega=[0,1]^3$ and $\Vector{x}_0=(0.5,0.5,0.8)$ be the location of the electric dipole. Then the model writes
\begin{equation}\label{eq:dipole}
    \left\lbrace
    \begin{aligned}
         & \nabla\times(\nabla\times\Vector{E}) -\kappa^2\epsilon_r \Vector{E}=\mathrm{i}\delta_{\Vector{x}_0}\Vector{a}, & & \text{in }\Omega, \\
         & (\nabla\times\Vector{E})\times\Vector{n} -\mathrm{i}\kappa\sqrt{\mathop{\rm Re}\epsilon_r}\Vector{E}_T=0, & & \text{on }\partial\Omega,
    \end{aligned} \right.
\end{equation}
where the relative permittivity $\epsilon_r$ is complex, $\Vector{a} = (1,0,0)$ and $\delta_{\Vector{x}_0}$ is the Dirac delta function with respect to $\Vector{x}_0$. The distribution of $\epsilon_r$ is as follows:
\begin{equation*}
    \epsilon_r =
    \left\lbrace
        \begin{aligned}
            &1.0, & &0.5\leq z\leq 1,\\
            &11.5+10^{-4}\mathrm{i}, & & 0.2\leq z < 0.5,\\
            &2.1 + 10^{-5}\mathrm{i}, & &  0.025\leq z < 0.2,\\
            &1.0 + 10^{7}\mathrm{i}, & & 0\leq z < 0.025,
        \end{aligned}
    \right.
\end{equation*}
where the different layers correspond to different materials.

In the current experiment, the mesh size $h$ is chosen such that $h\approx \lambda/8$ where $\lambda = 2\pi/\kappa$ is the wavelength, implying a fixed number of points per wavelength which is usually considered in practice. As for the domain decomposition, we still use a uniform decomposition into cubes with minimal overlap. We list the results in Table \ref{tab:dipole}, where \#dof denotes the number of degrees of freedom, $N$ denotes the number of subdomains and $N_{\rm iter}$ denotes the GMRES iteration counts.

\begin{table}[H]
    \caption{\rm GMRES iteration counts, dimension of the coarse space and the maximal size of local problems for the economical preconditioner}
    \label{tab:dipole}
    \small
    \begin{center}
        \begin{tabular}{cccccccc}
            \toprule
            $\kappa$ & \#dof & $N$ & $\mu$ & $N_{\rm iter}$ & $\mathop{\rm dim}V_{h,0}^\mu(\Omega)$ & $\max_l \mathop{\rm dim}V_h(\Omega_l)$\\
            \midrule
     \multirow{3}{*}{$10\pi$} & \multirow{3}{*}{1,953,462} & 216 & 12 & 26 & 72,576 & 23,432\\
            & & 343 & 11 & 26 & 98,098 & 16,980\\
            & & 512 & 11 & 27 & 146,432 & 13,562\\
            \\
      \multirow{3}{*}{$15\pi$} & \multirow{3}{*}{6,342,048} & 216 & 18 & 29 & 155,520 & 59,596\\
            & & 343 & 17 & 32 & 221,578 & 41,378\\
            & & 512 & 15 & 29 & 261,120 & 30,516\\
            \\
        \multirow{3}{*}{$20\pi$} & \multirow{3}{*}{14,862,194} & 216 & 25 & 30 & 291,600 & 118,256\\
            & & 343 & 23 & 30 & 394,450 & 81,410\\
            & & 512 & 20 & 31 & 450,560 & 58,948\\
            \bottomrule
        \end{tabular}
    \end{center}
\end{table}

It can be seen from Table \ref{tab:dipole} that, with such choice of parameters, the iteration counts increase slightly as $\kappa$ grows, indicating that the economical preconditioner still processes robustness with respect to $\kappa$ if $\mu$ is chosen appropriately.

\appendix
\numberwithin{equation}{section}
\renewcommand{\theequation}{\thesection.\arabic{equation}}
\section{A special ``interpolation" operator for $V_h(\Omega)$}\label{sec:appendixA}

In this section, we introduce a special ```quasi-N\'ed\'elec interpolation" operator $\widetilde{\Vector{r}}_h$, which was used to define the economical weighted operator $D_l$ in subsection \ref{subsec:1level}.

Let $K\in\mathcal{T}_h$ be a tetrahedron and $R_p(K)$ be the space consisting of N\'ed\'elec functions of order $p$ (see \cite[Section 5.5]{monk2003finite} for details). For a polygon $G$, let $\mathbb{P}_p(G)$ be space consisting of all polynomials of degree less than $p+1$ on $G$ and denote by $n_p(G)$ the dimension of $\mathbb{P}_p(G)$. For a series of points $\eta^G_{i}\subset G, {0\leq i< n_p(G)}$, we say $\{\eta^G_{i}\}$ are uniquely solvable in $\mathbb{P}_p(G)$ if every $P\in \mathbb{P}_p(G)$ can be uniquely determined by $P(\eta^G_i)$. Under the above preparation and following the classical approach of Ciarlet \cite{ciarlet2002finite}, we can give the definition of quasi-N\'ed\'elec finite element on reference element $\widehat{K}$ as follows.

\begin{definition}\label{def:newdof}
    Let points $\big\{\eta^{\widehat{e}}_i,0\leq i \leq p-1\big\}$, $\big\{\eta^{\widehat{f}}_{i,j},0\leq i+j\leq p-2\big\}$ and $\big\{\eta^{\widehat{K}}_{i,j,k},0\leq i+j+k\leq p-3\big\}$ be uniquely solvable in $\mathbb{P}_{p-1}(\widehat{e})$, $\mathbb{P}_{p-2}(\widehat{f})$ and $\mathbb{P}_{p-3}(\widehat{K})$ respectively, where $\widehat{e}$ is an edge of $\widehat{K}$ and $\widehat{f}$ is a face of $\widehat{K}$. Define a finite element on $\widehat{K}$ by
    \begin{itemize}
        \item $\widehat{K}$ is a tetrahedron
        \item $P_{\widehat{K}} = R_p(\widehat{K})$
        \item $\Sigma_{\widehat{K}} = \widetilde{M}_{\widehat{e}}(\widehat{\Vector{u}})\cup \widetilde{M}_{\widehat{f}}(\widehat{\Vector{u}}) \cup \widetilde{M}_{\widehat{K}}(\widehat{\Vector{u}})$ (for every edge~
        $\widehat{e}$ and face $\widehat{f}$  of $\widehat{K}$), where
              \begin{gather}
                  \widetilde{M}_{\widehat{e}}(\widehat{\Vector{u}}) = \left\lbrace \widetilde{M}_{\widehat{e}}^i(\widehat{\Vector{u}})\coloneqq\widehat{\Vector{u}}(\eta^{\widehat{e}}_i)\cdot\widehat{\Vector{\tau}}^{\widehat{e}},0\leq i
                  \leq p-1\right\rbrace
                  \label{eq:newdof_e}\\
                  \widetilde{M}_{\widehat{f}}(\widehat{\Vector{u}}) = \left\lbrace\widetilde{M}_{\widehat{f}}^{i,j}(\widehat{\Vector{u}})\coloneqq \widehat{\Vector{u}}(\eta^{\widehat{f}}_{i,j})\cdot\widehat{\Vector{\tau}}^{\widehat{f}}_\nu,0\leq i+j\leq p-2,\nu=1,2\right \rbrace, \label{eq:newdof_f}\\
                  \widetilde{M}_{\widehat{K}}(\widehat{\Vector{u}}) = \left\lbrace\widetilde{M}_{\widehat{K}}^{i,j,k}(\widehat{\Vector{u}})\coloneqq\widehat{\Vector{u}}(\eta^{\widehat{K}}_{i,j,k})\cdot\widehat{\Vector{\tau}}^{\widehat{K}}_\nu,0\leq i+j+k\leq p-3,\nu=1,2,3\right\rbrace. \label{eq:newdof_k}
              \end{gather}
        In the above definition of degrees of freedom, $\widehat{\Vector{\tau}}^{\widehat{e}}$ is a unit vector in the direction of $\widehat{e}$, $\widehat{\Vector{\tau}}^{\widehat{f}}_\nu, \nu=1,2$ are unit vectors in the direction of arbitrary two edges of $\widehat{f}$ respectively and $\widehat{\Vector{\tau}}^{\widehat{K}}_\nu, \nu = 1,2,3$ are unit vectors in the direction of arbitrary three edges of $\widehat{K}$ respectively.
    \end{itemize}
\end{definition}

The finite element defined in Definition \ref{def:newdof} is well-posed. We give the following two lemmas without proof for simplicity.
\begin{lemma}\label{lemma:newfeConform}
    For any $\Vector{u}\in R_p(\widehat{K})$, if the degrees of freedom associated with a face $\widehat{f}$ (defined in \eqref{eq:newdof_f}) and those associated with all edges of $\widehat{f}$ (defined in \eqref{eq:newdof_e}) vanish, then $\Vector{u}\times \Vector{n} = 0$ on $\widehat{f}$.
\end{lemma}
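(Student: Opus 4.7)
The plan is to show that the edge DoFs \eqref{eq:newdof_e} on all three edges of $\widehat{f}$, together with the face DoFs \eqref{eq:newdof_f}, form a unisolvent set on the space of tangential traces $W_p(\widehat{f}) := \{\Vector{u}_T|_{\widehat{f}} : \Vector{u}\in R_p(\widehat{K})\}$. By standard N\'ed\'elec theory \cite{monk2003finite}, $W_p(\widehat{f})$ coincides with the 2D N\'ed\'elec space of order $p$ on the triangle $\widehat{f}$, which has dimension $p(p+2)$, and the total number of face-associated DoFs equals $3p+p(p-1)=p(p+2)$. Thus unisolvency reduces to injectivity: once proved, the vanishing of all listed DoFs forces $\Vector{w} := \Vector{u}_T|_{\widehat{f}} \equiv \Vector{0}$, which is equivalent to $\Vector{u}\times\Vector{n}|_{\widehat{f}} = \Vector{0}$.

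I would proceed in two stages. \textbf{Edge stage.} On each edge $\widehat{e}$ of $\widehat{f}$ the restriction $\Vector{w}\cdot\widehat{\Vector{\tau}}^{\widehat{e}}|_{\widehat{e}}$ is a univariate polynomial of degree at most $p-1$; the assumed vanishing of \eqref{eq:newdof_e} together with unique solvability of $\{\eta_i^{\widehat{e}}\}$ in $\mathbb{P}_{p-1}(\widehat{e})$ forces this polynomial to vanish identically on each of the three edges of $\widehat{f}$. Hence $\Vector{w}$ lies in the face-bubble subspace $B_p(\widehat{f})\subset W_p(\widehat{f})$ of fields with vanishing tangential trace on $\partial\widehat{f}$, which has dimension $p(p-1)$ by the classical N\'ed\'elec dimension count.

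\textbf{Face stage.} Since $\widehat{\Vector{\tau}}^{\widehat{f}}_1, \widehat{\Vector{\tau}}^{\widehat{f}}_2$ are linearly independent tangents to $\widehat{f}$, the vanishing of \eqref{eq:newdof_f} gives $\Vector{w}(\eta_{i,j}^{\widehat{f}}) = \Vector{0}$ at the $\tfrac{p(p-1)}{2}$ interior nodes. Choosing $\widehat{\Vector{\tau}}^{\widehat{f}}_\nu = \widehat{\Vector{\tau}}^{\widehat{e}_\nu}$ for $\nu=1,2$, the face-bubble property yields $\Vector{w}\cdot\widehat{\Vector{\tau}}^{\widehat{f}}_\nu = \lambda_\nu \phi_\nu$ with $\phi_\nu \in \mathbb{P}_{p-1}(\widehat{f})$, so $\phi_\nu(\eta_{i,j}^{\widehat{f}}) = 0$ at nodes unisolvent in $\mathbb{P}_{p-2}(\widehat{f})$. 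The remaining $2p$-dimensional residual freedom will then be eliminated by combining the third-edge constraint $a\lambda_1\phi_1 + b\lambda_2\phi_2 \in \lambda_3\mathbb{P}_{p-1}(\widehat{f})$, where $a,b$ are defined by $\widehat{\Vector{\tau}}^{\widehat{e}_3} = a\widehat{\Vector{\tau}}^{\widehat{f}}_1 + b\widehat{\Vector{\tau}}^{\widehat{f}}_2$, with the codimension-$(p+2)$ N\'ed\'elec constraint $R_p(\widehat{f})\subset\mathbb{P}_p(\widehat{f})^2$. These jointly force $\phi_1 = \phi_2 = 0$ and hence $\Vector{w}\equiv\Vector{0}$.

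The main obstacle will be verifying this final elimination of the $2p$ residual dimensions. Although the abstract dimension count ($p(p-1)$ face DoFs matched with $p(p-1)$ face-bubble dimensions) guarantees that it must succeed, an explicit argument is required for rigour. The plan is to model this step on the classical N\'ed\'elec unisolvency proofs in \cite{monk2003finite} and \cite{ciarlet2002finite}, adapted to replace integral-moment DoFs by the point-evaluation DoFs used here, and supplemented by a direct determinant computation in low-degree cases (for $p=2$, the reduction produces a $2\times 2$ linear system with nonzero determinant, confirming unisolvency explicitly).
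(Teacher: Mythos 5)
The paper states Lemma \ref{lemma:newfeConform} explicitly ``without proof for simplicity,'' so there is no authors' argument to compare against; judged on its own terms, your proposal has the right skeleton but leaves the decisive step unproved. The setup is sound: the tangential-trace space on $\widehat{f}$ is the two-dimensional N\'ed\'elec space of dimension $p(p+2)$, your count $3p+p(p-1)=p(p+2)$ of the relevant degrees of freedom is correct, and the edge stage is fine --- $\Vector{u}\cdot\widehat{\Vector{\tau}}^{\widehat{e}}|_{\widehat{e}}\in\mathbb{P}_{p-1}(\widehat{e})$ because the homogeneous degree-$p$ part of $R_p$ satisfies $\Vector{q}\cdot\Vector{x}=0$, so unique solvability of $\{\eta_i^{\widehat{e}}\}$ in $\mathbb{P}_{p-1}(\widehat{e})$ forces the tangential trace to vanish on $\partial\widehat{f}$ and places $\Vector{w}=\Vector{u}_T|_{\widehat{f}}$ in the $p(p-1)$-dimensional bubble space.

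The genuine gap is the face stage. Your claim that ``the abstract dimension count \dots guarantees that it must succeed'' is false: matching the number of degrees of freedom to the dimension of the space is necessary for unisolvency but never sufficient; injectivity of the evaluation map is precisely what must be proved. This is also where the classical proofs in \cite{monk2003finite,ciarlet2002finite} do not transfer, since they exploit integral-moment degrees of freedom and integration by parts, whereas Definition \ref{def:newdof} uses point evaluations. Your proposed elimination of the residual freedom (combining the third-edge divisibility with the codimension-$(p+2)$ constraint to ``force $\phi_1=\phi_2=0$'') is a statement of intent rather than an argument, and it is not clear it closes for general $p$ under the sole hypothesis that $\{\eta^{\widehat{f}}_{i,j}\}$ is uniquely solvable in $\mathbb{P}_{p-2}(\widehat{f})$: that hypothesis controls interpolation of scalar polynomials of degree $p-2$, whereas what is needed is that no nonzero bubble field (whose components have degree up to $p$) vanishes at all the points $\eta^{\widehat{f}}_{i,j}$ --- a different nondegeneracy condition that does not follow from the first by any dimension count. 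The only part of the face stage you actually establish is $p=2$, where the claim does hold for any interior point (a direct computation with the two bubble functions $\lambda_k(\lambda_i\nabla\lambda_j-\lambda_j\nabla\lambda_i)$ confirms it). For $p\ge 3$ you must either prove the required injectivity, possibly under additional explicitly stated assumptions on the face points (e.g.\ a principal-lattice structure), or record it as a hypothesis; as written, the proof is incomplete exactly at the point you yourself identify as the main obstacle.
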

\begin{lemma}\label{lemma:newfeWelldef}
    For any $\Vector{u}\in R_p(\widehat{K})$, if the degrees of freedom defined in \eqref{eq:newdof_e} - \eqref{eq:newdof_k} vanish, then $\Vector{u} \equiv 0$.
\end{lemma}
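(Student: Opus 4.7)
The plan is a standard unisolvency argument by matching DOF counts with $\dim R_p(\widehat{K})$ and then proving injectivity. A direct count gives
\[
6p+4p(p-1)+3\,n_{p-3}(\widehat{K})=6p+4p(p-1)+\tfrac{p(p-1)(p-2)}{2}=\tfrac{p(p+2)(p+3)}{2}=\dim R_p(\widehat{K}),
\]
so it suffices to show that $\widehat{\Vector{u}}\equiv 0$ whenever all DOFs in \eqref{eq:newdof_e}--\eqref{eq:newdof_k} vanish.

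Assuming all DOFs of $\widehat{\Vector{u}}$ vanish, I would reduce from the boundary inward in three stages. First, on any edge $\widehat{e}$, the tangential trace $\widehat{\Vector{u}}\cdot\widehat{\Vector{\tau}}^{\widehat{e}}|_{\widehat{e}}$ is a polynomial of degree at most $p-1$ in the edge parameter (a standard property of $R_p$); since it vanishes at the $p$ points $\{\eta_i^{\widehat{e}}\}$ which are uniquely solvable in $\mathbb{P}_{p-1}(\widehat{e})$, it vanishes identically on $\widehat{e}$. Combined with the hypothesis $\widetilde{M}_{\widehat{f}}(\widehat{\Vector{u}})=0$, Lemma~\ref{lemma:newfeConform} then yields $\widehat{\Vector{u}}\times\Vector{n}=0$ on each face, hence on all of $\partial\widehat{K}$. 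Thus $\widehat{\Vector{u}}$ lies in the bubble subspace $V_0\coloneqq\{\widehat{\Vector{v}}\in R_p(\widehat{K}):\widehat{\Vector{v}}\times\Vector{n}=0\text{ on }\partial\widehat{K}\}$, which by the standard Nédélec theory has dimension $\tfrac{p(p-1)(p-2)}{2}$, exactly the number of interior DOFs.

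For the interior step, the linear independence of the three edge-directions $\widehat{\Vector{\tau}}^{\widehat{K}}_1,\widehat{\Vector{\tau}}^{\widehat{K}}_2,\widehat{\Vector{\tau}}^{\widehat{K}}_3$ converts the hypothesis $\widehat{\Vector{u}}(\eta_{i,j,k}^{\widehat{K}})\cdot\widehat{\Vector{\tau}}^{\widehat{K}}_\nu=0$ for $\nu=1,2,3$ into the vector equality $\widehat{\Vector{u}}(\eta_{i,j,k}^{\widehat{K}})=\Vector{0}$ at every interior node. I would then invoke the bubble characterization of $V_0$: each $\widehat{\Vector{u}}\in V_0$ admits a representation of the form $\widehat{\Vector{u}}=\sum_{\nu=1}^{3}b_\nu q_\nu \widehat{\Vector{\tau}}^{\widehat{K}}_\nu$ with $q_\nu\in\mathbb{P}_{p-3}(\widehat{K})$ and with bubble factors $b_\nu$ nonvanishing at every interior node (the dimensions match: $3\dim\mathbb{P}_{p-3}(\widehat{K})=\dim V_0$). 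Evaluating at $\eta_{i,j,k}^{\widehat{K}}$ forces $q_\nu(\eta_{i,j,k}^{\widehat{K}})=0$ for every $(i,j,k,\nu)$; by the unique solvability of $\{\eta_{i,j,k}^{\widehat{K}}\}$ in $\mathbb{P}_{p-3}(\widehat{K})$, each $q_\nu\equiv 0$, and hence $\widehat{\Vector{u}}\equiv 0$.

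The main obstacle is making the bubble representation of $V_0$ rigorous: the naive choice $b_{\widehat{K}}\cdot(\mathbb{P}_{p-3}(\widehat{K}))^3$ (with $b_{\widehat{K}}$ the product of the four barycentric coordinates) has degree $p+1$ and does not sit inside $R_p\subset(\mathbb{P}_p(\widehat{K}))^3$, so a more delicate factorization exploiting the N\'ed\'elec structure $R_p=\mathbb{P}_{p-1}^3\oplus\{\Vector{p}\in\widetilde{\mathbb{P}}_p^3:\Vector{x}\cdot\Vector{p}=0\}$ together with the tangential-zero condition on all four faces is needed. If the explicit construction proves cumbersome, an equivalent dimension-parity route is available: exhibit any basis of $V_0$ of the right cardinality whose interior DOF vectors are linearly independent; the dimension equality $\dim V_0=3\,n_{p-3}(\widehat{K})$ then upgrades injectivity to bijectivity and closes the proof.
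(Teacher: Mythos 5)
The paper states this lemma without proof (``we give the following two lemmas without proof for simplicity''), so there is no author argument to compare against; judged on its own, your proposal is correct up to the bubble space but has a genuine gap exactly where you flag it. The dimension count $6p+4p(p-1)+\tfrac{p(p-1)(p-2)}{2}=\tfrac{p(p+2)(p+3)}{2}=\dim R_p(\widehat{K})$ is right, and the boundary reduction is sound: the edge tangential trace of an $R_p$ function is a polynomial of degree $\le p-1$ on each edge, and Lemma \ref{lemma:newfeConform} then gives $\widehat{\Vector{u}}\times\Vector{n}=0$ on every face directly from the vanishing of the edge and face DOFs (your separate edge-trace argument is redundant but harmless), so $\widehat{\Vector{u}}$ lies in $V_0$ with $\dim V_0=\tfrac{p(p-1)(p-2)}{2}$. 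The gap is the asserted factorization $\widehat{\Vector{u}}=\sum_{\nu}b_\nu q_\nu\widehat{\Vector{\tau}}^{\widehat{K}}_\nu$ with $q_\nu\in\mathbb{P}_{p-3}(\widehat{K})$, which carries the entire interior step and is never established. As you note, $b_{\widehat{K}}(\mathbb{P}_{p-3})^3$ has degree $p+1$; but even degree-$p$ candidates fail, e.g.\ for $p=3$ the field $\lambda_2\lambda_3\lambda_4\nabla\lambda_1$ has vanishing tangential trace yet is not in $R_3$, because its top homogeneous part $\Vector{v}_3$ does not satisfy $\Vector{x}\cdot\Vector{v}_3=0$. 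So there is no reason $V_0$ should split as three scalar copies of $\mathbb{P}_{p-3}$ times fixed nonvanishing bubbles, and the dimension match $3n_{p-3}(\widehat{K})=\dim V_0$ does not by itself produce such a product structure.

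Your fallback is circular: ``exhibit a basis of $V_0$ whose interior DOF vectors are linearly independent'' is precisely the remaining content of the lemma (injectivity of the interior point evaluations on $V_0$), not an alternative route to it. Note also that this is where the hypothesis on the points is genuinely too weak as used: unique solvability of $\{\eta^{\widehat{K}}_{i,j,k}\}$ in $\mathbb{P}_{p-3}(\widehat{K})$ controls Lagrange interpolation of scalar polynomials of degree $p-3$, whereas what must be shown is that full vector evaluation at those points is injective on $V_0$, which is not a product of scalar polynomial spaces. Already for $p=3$ this amounts to proving that no nonzero element of the three-dimensional space $V_0$ vanishes at the single chosen interior point --- a statement about the zero sets of specific N\'ed\'elec bubbles, not about polynomial interpolation. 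To close the argument you need some additional structural input on $V_0$ inside $R_p$ (for instance an explicit interior shape-function basis from the hierarchical constructions for first-kind N\'ed\'elec elements, together with a verification that point evaluation at the chosen nodes is injective on it, possibly under extra conditions on the node placement); without that, the interior step does not go through.
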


Combining Lemma \ref{lemma:newfeConform} and Lemma \ref{lemma:newfeWelldef}, we can derive the following theorem.
\begin{theorem}
    The finite element in Definition \ref{def:newdof} is well-possed and curl conforming.
\end{theorem}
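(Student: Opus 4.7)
The plan is to deduce the theorem directly from the two preceding lemmas, after a bookkeeping check on the dimension of $R_p(\widehat K)$. Accordingly I would split the argument into a well-posedness (unisolvence) step and a curl-conformity step; neither step is deep in itself once Lemmas \ref{lemma:newfeConform} and \ref{lemma:newfeWelldef} are in hand.

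For well-posedness, I would first verify that the total cardinality of the prescribed degrees of freedom equals $\dim R_p(\widehat K)$. A direct count of the families \eqref{eq:newdof_e}--\eqref{eq:newdof_k} gives $6p$ edge DOFs, $4p(p-1)$ face DOFs (since $n_{p-2}(\widehat f)=p(p-1)/2$, paired with two tangent directions per face), and $p(p-1)(p-2)/2$ interior DOFs, whose sum collapses to $p(p+2)(p+3)/2=\dim R_p(\widehat K)$. Once the cardinalities match, Lemma \ref{lemma:newfeWelldef} gives injectivity of the evaluation map $R_p(\widehat K)\to\mathbb{C}^{N_{\mathrm{dof}}}$, and hence bijectivity, which is the unisolvence on $\widehat K$. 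Unisolvence on an arbitrary tetrahedron $K\in\mathcal T_h$ then follows from the usual covariant (Piola) pullback to $\widehat K$, which transforms both $R_p(K)$ and the tangent-directed point evaluations in a compatible way.

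For the curl-conformity step, I would use the standard tangential-trace matching argument. Given two tetrahedra $K_1, K_2$ meeting on a common face $f$, suppose $\Vector{u}_i\in R_p(K_i)$ share the same values at every DOF attached to $f$ and to each edge of $f$. Pulling back to $\widehat K$ and subtracting produces an element of $R_p(\widehat K)$ all of whose face-$\widehat f$ DOFs and edge-of-$\widehat f$ DOFs vanish; Lemma \ref{lemma:newfeConform} then forces its tangential trace on $\widehat f$ to vanish, whence $\Vector{u}_1\times\Vector{n}=\Vector{u}_2\times\Vector{n}$ on $f$. This is precisely the jump condition guaranteeing that the globally assembled space lies in $H(\Vector{curl};\Omega)$.

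The substantive mathematical content has already been packaged into the two lemmas, so the theorem itself is essentially a combinatorial-plus-assembly statement. The only step that could plausibly go wrong is the dimension count, and even that reduces to the Nédélec formula $\dim R_p(\widehat K)=p(p+2)(p+3)/2$; once this is verified, the remainder of the proof is formal. The genuine difficulty lies upstream, in proving Lemmas \ref{lemma:newfeConform} and \ref{lemma:newfeWelldef}, which require that the prescribed uniquely-solvable point configurations in $\mathbb{P}_{p-1}(\widehat e)$, $\mathbb{P}_{p-2}(\widehat f)$ and $\mathbb{P}_{p-3}(\widehat K)$ interact correctly with the Nédélec structure to recover first the tangential trace on each face and then the full function.
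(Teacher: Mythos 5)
Your proposal is correct and follows exactly the route the paper intends: the paper states the theorem as an immediate consequence of Lemmas \ref{lemma:newfeConform} and \ref{lemma:newfeWelldef} and gives no further written proof, while you supply the standard bookkeeping (the dimension count $6p+4p(p-1)+p(p-1)(p-2)/2=p(p+2)(p+3)/2=\dim R_p(\widehat K)$, the pullback to the reference element, and the tangential-trace matching across faces), all of which checks out. You are also right that the genuine mathematical content sits in the two lemmas themselves, which the paper states without proof.
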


Therefore, we could define the quasi-N\'ed\'elec finite element on arbitrary element $K$ through transformation and the quasi-N\'ed\'elec interpolation operator $\widetilde{\Vector{r}}_h$ as in \cite{monk2003finite}.

\begin{lemma}\label{lemma:newInterpWelldef}
    For any $\Vector{u}\in H(\Vector{curl};\Omega)$, assume that there exists a integer $m\geq 2$ such that $\Vector{u}\in (H^m(K))^3$ for any $K\in\mathcal{T}_h$. Then $\widetilde{\Vector{r}}_h\Vector{u}\in V_h(\Omega)$ is well-defined and there holds
    \begin{align}
         & \Vert{\widetilde{\Vector{r}}_h\Vector{u}}\Vert_{0,K}\lesssim \Vert{\Vector{u}}\Vert_{m,K}, \label{eq:newInterpL2Bound}   \\
         & \Vert{\nabla\times\widetilde{\Vector{r}}_h\Vector{u}}\Vert_{0,K}\lesssim \Vert{\Vector{u}}\Vert_{m,K}+\Vert{\Vector{u}_T}\Vert_{m-\frac{1}{2},\partial K}, \label{eq:newInterpCurlBound} \\
         & \Vert{(\widetilde{\Vector{r}}_h\Vector{u})_T}\Vert_{0,f}\lesssim \Vert{\Vector{u}_T}\Vert_{m-\frac12,f},\quad \text{where } f\subset{\partial K}\text{ is a face},\label{eq:newInterpTBound}
    \end{align}
    where the implicit constant is independent of $\Vector{u}$.
\end{lemma}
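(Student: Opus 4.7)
The plan is to reduce the three estimates to the reference tetrahedron $\widehat{K}$ via a covariant Piola transform and to exploit the finite dimensionality of $R_p(\widehat{K})$; all transfer constants then depend only on the shape-regularity of $\mathcal{T}_h$. First I would verify that $\widetilde{\Vector{r}}_h\Vector{u}$ is well defined. The degrees of freedom in Definition \ref{def:newdof} are pointwise evaluations of $\Vector{u}$ paired with fixed tangent vectors at nodes lying in the interior of $K$, on its faces, and on its edges. In three dimensions $H^m(K)\hookrightarrow C^0(\overline K)$ for $m>3/2$, so $m\geq 2$ gives unambiguous interior point values. For the face and edge nodes I would invoke the trace theorem to get $\Vector{u}_T\in H^{m-1/2}(\partial K)^3$ and then further restrict to edges, where a second application of Sobolev embedding on the one-dimensional skeleton yields continuous point values since $m-1/2>1$. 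Hence the DoFs in \eqref{eq:newdof_e}--\eqref{eq:newdof_k} are all finite numbers, so $\widetilde{\Vector{r}}_h\Vector{u}|_K\in R_p(K)$ is uniquely defined by Lemma \ref{lemma:newfeWelldef}, and the tangential-trace continuity from Lemma \ref{lemma:newfeConform} ensures $\widetilde{\Vector{r}}_h\Vector{u}\in V_h(\Omega)$.

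Next I would establish the three estimates on $\widehat{K}$. Every DoF in Definition \ref{def:newdof} is a continuous linear functional on $H^m(\widehat{K})^3$ by Sobolev embedding, and because $R_p(\widehat{K})$ is finite dimensional all norms on it are equivalent. For \eqref{eq:newInterpL2Bound} this gives $\Vert\widetilde{\Vector{r}}_{\widehat{K}}\widehat{\Vector{u}}\Vert_{0,\widehat K}\lesssim \max_{\text{DoF}}|\text{DoF}(\widehat{\Vector{u}})|\lesssim \Vert\widehat{\Vector{u}}\Vert_{m,\widehat K}$. For \eqref{eq:newInterpCurlBound} I would split the DoFs into two groups: the volume moments in \eqref{eq:newdof_k} are bounded by $\Vert\widehat{\Vector{u}}\Vert_{m,\widehat K}$, whereas the edge and face moments in \eqref{eq:newdof_e}--\eqref{eq:newdof_f} are inner products of $\widehat{\Vector{u}}$ with vectors tangent to the corresponding face, so they depend only on $\widehat{\Vector{u}}_T|_{\partial\widehat K}$ and are controlled by $\Vert\widehat{\Vector{u}}_T\Vert_{m-1/2,\partial\widehat K}$ via continuous embeddings on the boundary skeleton. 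Norm equivalence on $R_p(\widehat K)$ then bounds $\Vert\widehat{\nabla}\times\widetilde{\Vector{r}}_{\widehat{K}}\widehat{\Vector{u}}\Vert_{0,\widehat K}$ by the sum of these two contributions. For \eqref{eq:newInterpTBound}, Lemma \ref{lemma:newfeConform} shows that $(\widetilde{\Vector{r}}_{\widehat K}\widehat{\Vector{u}})_T|_{\widehat f}$ is determined solely by the DoFs attached to $\widehat f$ and its edges, which in turn depend only on $\widehat{\Vector{u}}_T|_{\widehat f}$.

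Finally I would push the reference-element bounds back to $K$ using the covariant Piola transform: it preserves $R_p$, carries the DoFs of Definition \ref{def:newdof} into themselves up to shape-regular factors, and induces the standard scalings of $\Vert\cdot\Vert_{0,K}$, $\Vert\nabla\times\cdot\Vert_{0,K}$, $\Vert\cdot\Vert_{m,K}$ and $\Vert\cdot\Vert_{m-1/2,\partial K}$, yielding \eqref{eq:newInterpL2Bound}--\eqref{eq:newInterpTBound}. The main technical obstacle is the DoF split used for the curl estimate: one must verify carefully that every edge and face point evaluation really does pair $\widehat{\Vector{u}}$ only with a vector tangent to the relevant boundary piece, so that the boundary term appearing on the right of \eqref{eq:newInterpCurlBound} involves $\Vector{u}_T$ rather than the full trace of $\Vector{u}$.
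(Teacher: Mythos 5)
Your argument is correct, and for well-definedness, for \eqref{eq:newInterpL2Bound} and for \eqref{eq:newInterpTBound} it is essentially the paper's proof in reference-element clothing: the paper works directly on the physical element $K$ with element-dependent constants $C_{\Vector{\phi}}$, $C_{\Vector{q}}$ (legitimate because the lemma only asserts independence of $\Vector{u}$, not of $h$), bounds each degree of freedom by $\Vert\Vector{u}\Vert_{(C(K))^3}\lesssim\Vert\Vector{u}\Vert_{m,K}$ via Sobolev embedding, and identifies $(\widetilde{\Vector{r}}_h\Vector{u})_T=\widetilde{\Vector{r}}_{h,f}\Vector{u}_T$ on each face, which is exactly your observation that the trace on $f$ is determined by the DoFs attached to $f$ and its edges. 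The genuine divergence is in the curl estimate \eqref{eq:newInterpCurlBound}. The paper proves it by duality: testing $\nabla\times\widetilde{\Vector{r}}_h\Vector{u}\in(\mathbb{P}_{p-1}(K))^3$ against polynomials $\Vector{q}$, integrating by parts, and thereby reducing \eqref{eq:newInterpCurlBound} to the already-established bounds \eqref{eq:newInterpL2Bound} and \eqref{eq:newInterpTBound}; this makes transparent why $\Vert\Vector{u}_T\Vert_{m-1/2,\partial K}$ appears on the right. You instead split the DoFs into boundary-attached and interior ones and invoke equivalence of norms on the finite-dimensional space $R_p(\widehat K)$ (the DoF max is a norm by Lemma \ref{lemma:newfeWelldef}, and $\widehat{\Vector{v}}\mapsto\Vert\widehat{\nabla}\times\widehat{\Vector{v}}\Vert_{0,\widehat K}$ is a seminorm dominated by it). That is a valid and more direct route; in fact, since the edge and face DoFs are also point values of $\widehat{\Vector{u}}$ in $\overline{\widehat K}$, your argument yields the stronger reference-element bound $\Vert\widehat{\nabla}\times\widetilde{\Vector{r}}\widehat{\Vector{u}}\Vert_{0,\widehat K}\lesssim\Vert\widehat{\Vector{u}}\Vert_{m,\widehat K}$ alone, and the split into a boundary contribution is only needed if one wants that term isolated (as one would for $h$-explicit constants, which this lemma does not claim). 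Your tangency check for the edge and face functionals is indeed satisfied by construction in Definition \ref{def:newdof}, so there is no gap.
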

\begin{proof}
    For any $K\in \mathcal{T}_h$, assume that $\widetilde{\Vector{r}}_h\Vector{u}$ admits the following decomposition
    \begin{align*}
        \widetilde{\Vector{r}}_h\Vector{u}|_K
        &= \sum_e\sum_{0\leq i\leq p-1}\widetilde{M}_e^i(\Vector{u})\Vector{\phi}_e^i
        + \sum_f\sum_{0\leq i+j\leq p-2}\sum_{\nu=1}^2\widetilde{M}_{f,\nu}^{i,j}(\Vector{u})\Vector{\phi}_{f,\nu}^{i,j}\\
        &+ \sum_{0\leq i+j+k\leq p-3}\sum_{\nu=1}^3\widetilde{M}_{K,\nu}^{i,j,k}(\Vector{u})\Vector{\phi}_{K,\nu}^{i,j,k},
    \end{align*}
    where $\widetilde{M}_e^i$, $\widetilde{M}_{f,\nu}^{i,j}$ and $\widetilde{M}_{K,\nu}^{i,j,k}$ are defined in Definition \ref{def:newdof} and $\Vector{\phi}_e^i$, $\Vector{\phi}_{f,\nu}^{i,j}$ and $\Vector{\phi}_{K,\nu}^{i,j,k}$ denote the corresponding basis. It's easy to check that
    \begin{gather*}
        \vert{\widetilde{M}_e^i(\Vector{u})}\vert = \vert{\Vector{u}(\eta_e^i)\cdot\tau_e}\vert\leq\Vert{\Vector{u}}\Vert_{(C(K))^3}\vert{\tau_e}\vert\leq h_K\Vert{\Vector{u}}\Vert_{(C(K))^3}.
    \end{gather*}
    Similarly, we can derive
    \begin{equation*}
        \vert{\widetilde{M}_f^{i,j}(\Vector{u})}\vert \leq h_K\Vert{\Vector{u}}\Vert_{(C(K))^3},~\vert{\widetilde{M}_K^{i,j,k}(\Vector{u})}\vert \leq h_K\Vert{\Vector{u}}\Vert_{(C(K))^3}.
    \end{equation*}
    Let
    \begin{align*}
        C_{\Vector{\phi}}
        &\coloneqq \sum_e\sum_{0\leq i\leq p-1}\Vert{\Vector{\phi}_e^i}\Vert_{0,K}
        + \sum_f\sum_{0\leq i+j\leq p-2}\sum_{\nu=1}^2\Vert{\Vector{\phi}_{f,\nu}^{i,j}}\Vert_{0,K}\\
        &+ \sum_{0\leq i+j+k\leq p-3}\sum_{\nu=1}^3\Vert{\Vector{\phi}_{K,\nu}^{i,j,k}}\Vert_{0,K}<\infty.
    \end{align*}
    Then from Sobolev embedding theorem, there holds
    \begin{equation*}
        \Vert{\widetilde{\Vector{r}}_h\Vector{u}}\Vert_{0,K}\leq C_{\Vector{\phi}} h_K\Vert{\Vector{u}}\Vert_{(C(K))^3}\lesssim \Vert{\Vector{u}}\Vert_{m,K}.
    \end{equation*}
    This is \eqref{eq:newInterpL2Bound}. Moreover, $(\widetilde{\Vector{r}}_h{\Vector{u}})_T\in R_p(f)$ for any face $f\subset\partial K$. Note that $(\widetilde{\Vector{r}}_h{\Vector{u}})_T=\widetilde{\Vector{r}}_{h,f}\Vector{u}_T$, where $\widetilde{\Vector{r}}_{h,f}$ is the quasi-N\'ed\'elec interpolation operator on $R_p(f)$. Similarly, there holds
    \begin{equation*}
        \Vert{(\widetilde{\Vector{r}}_h\Vector{u})_T}\Vert_{0,f}\lesssim \Vert{\Vector{u}_T}\Vert_{(C(f))^3}\lesssim\Vert{\Vector{u}_T}\Vert_{m-\frac12,f},
    \end{equation*}
    and this is exactly \eqref{eq:newInterpTBound}. Finally, for any $ \Vector{q}\in(\mathbb{P}_{p-1}(K))^3$, we have
    \begin{align*}
        \bigg\vert{\int_K \nabla\times\widetilde{\Vector{r}}_h\Vector{u}\cdot\Vector{q}\Diff V}\bigg\vert
         & = \bigg\vert{\int_K \widetilde{\Vector{r}}_h\Vector{u}\cdot\nabla\times\Vector{q}\Diff {V} - \int_{\partial K}(\widetilde{\Vector{r}}_h\Vector{u})_T\cdot\Vector{q}_t\Diff S}\bigg\vert \\
         & \leq \Vert{\widetilde{\Vector{r}}_h\Vector{u}}\Vert_{0,K}\Vert{\nabla\times\Vector{q}}\Vert_{0,K} + \Vert{(\widetilde{\Vector{r}}_h\Vector{u})_T}\Vert_{0,\partial K}\Vert{\Vector{q}_t}\Vert_{0,\partial K}\\
         & \leq \big(\Vert{\nabla\times\Vector{q}}\Vert_{0,K}^2 + \Vert{\Vector{q}_t}\Vert_{0,\partial K}^2\big)^\frac{1}{2} \big(\Vert{\widetilde{\Vector{r}}_h\Vector{u}}\Vert_{0,K}^2 + \Vert{(\widetilde{\Vector{r}}_h\Vector{u})_T}\Vert_{0,\partial K}^2\big)^\frac{1}{2},
    \end{align*}
    where $\Vector{q}_t = \Vector{n}\times \Vector{q}$. Let
    \begin{equation*}
        C_{\Vector{q}}=\sup_{\Vector{q}\in(\mathbb{P}_{p-1}(K))^3}\frac{\big(\Vert{\nabla\times\Vector{q}}\Vert_{0,K}^2 + \Vert{\Vector{q}_t}\Vert_{0,\partial K}^2\big)^\frac{1}{2}}{\Vert{\Vector{q}}\Vert_{0,K}} < \infty,
    \end{equation*}
    then
    \begin{equation*}
        \sup_{\Vector{q}\in(\mathbb{P}_{p-1}(K))^3}\frac{\big\vert({\nabla\times\widetilde{\Vector{r}}_h\Vector{u}},{\Vector{q}})_{K}\big\vert}{\Vert{\Vector{q}}\Vert_{0,K}}\leq C_{\Vector{q}}\big(\Vert{\widetilde{\Vector{r}}_h\Vector{u}}\Vert_{0,K}^2 + \Vert{(\widetilde{\Vector{r}}_h\Vector{u})_T}\Vert_{0,\partial K}^2\big)^\frac{1}{2}.
    \end{equation*}
    Since $\nabla\times\widetilde{\Vector{r}}_h\Vector{u}\in(\mathbb{P}_{p-1}(K))^3$, then from \eqref{eq:newInterpL2Bound} and \eqref{eq:newInterpTBound}, there holds
    \begin{align*}
        \Vert{\nabla\times\widetilde{\Vector{r}}_h\Vector{u}}\Vert_{0,K}
         & \leq C_{\Vector{q}}\left(\Vert{\widetilde{\Vector{r}}_h\Vector{u}}\Vert_{0,K}^2 + \Vert{(\widetilde{\Vector{r}}_h\Vector{u})_T}\Vert_{(L^2(\partial K))^3}^2\right)^\frac{1}{2} \\
         & \lesssim \Vert{\Vector{u}}\Vert_{m,K}+\Vert{\Vector{u}_T}\Vert_{m-\frac{1}{2},\partial K}.
    \end{align*}
\end{proof}

\begin{lemma}\label{lemma:newInterpApply}
    For $K\in\mathcal{T}_h$, any $\Vector{v}_h\in V_h(K)$ admits the decomposition $\Vector{v}_h = \sum_{\eta}\widetilde{M}_\eta(\Vector{v}_h)\phi_\eta$ where $\widetilde{M}_\eta$ is a degree of freedom
    associated with the edge/face/element $\eta$ (see Definition \ref{def:newdof}). Moreover, for any $\chi\in C(K)$, there holds
    \begin{equation*}
        \widetilde{\Vector{r}}_h(\chi\Vector{v}_h) = \widetilde{\Vector{r}}_h\left(\chi\sum_{\eta}\widetilde{M}_\eta(\Vector{v}_h)\phi_\eta\right)=\sum_\eta\widetilde{M}_\eta(\Vector{v}_h)\chi(\eta)\phi_\eta.
    \end{equation*}
\end{lemma}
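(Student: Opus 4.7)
The plan is to exploit the crucial structural feature of the quasi-N\'ed\'elec finite element from Definition \ref{def:newdof}: every degree of freedom $\widetilde{M}_\eta$ is a \emph{point evaluation} of the tangential component at a node $\eta$, rather than an integral moment. This is precisely why the multiplication by a continuous weight $\chi$ can be commuted with the interpolation, and it is what distinguishes $\widetilde{\Vector{r}}_h$ from the standard N\'ed\'elec interpolation $\Vector{r}_h$ from the implementation point of view.

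First I would establish the decomposition $\Vector{v}_h = \sum_\eta \widetilde{M}_\eta(\Vector{v}_h)\phi_\eta$. This is a direct consequence of the well-posedness of the finite element in Definition \ref{def:newdof} (guaranteed by Lemmas \ref{lemma:newfeConform} and \ref{lemma:newfeWelldef}): the family $\{\phi_\eta\}$ is by construction the nodal basis dual to $\{\widetilde{M}_\eta\}$, so both $\Vector{v}_h$ and $\sum_\eta \widetilde{M}_\eta(\Vector{v}_h)\phi_\eta$ are elements of $R_p(K)$ sharing the same values under every $\widetilde{M}_\eta$, and hence coincide.

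Second I would verify the multiplicative identity. Since $\chi \in C(K)$ and $\Vector{v}_h \in R_p(K) \subset C^\infty(K)^3$, the product $\chi\Vector{v}_h$ is continuous, so the point evaluations in \eqref{eq:newdof_e}--\eqref{eq:newdof_k} remain well defined on $\chi\Vector{v}_h$. By the definition of $\widetilde{\Vector{r}}_h$,
\begin{equation*}
    \widetilde{\Vector{r}}_h(\chi\Vector{v}_h) = \sum_\eta \widetilde{M}_\eta(\chi\Vector{v}_h)\phi_\eta.
\end{equation*}
For an edge degree of freedom we compute
\begin{equation*}
    \widetilde{M}_e^i(\chi\Vector{v}_h) = (\chi\Vector{v}_h)(\eta_e^i)\cdot\widehat{\Vector{\tau}}^e = \chi(\eta_e^i)\bigl(\Vector{v}_h(\eta_e^i)\cdot\widehat{\Vector{\tau}}^e\bigr) = \chi(\eta_e^i)\,\widetilde{M}_e^i(\Vector{v}_h),
\end{equation*}
and the face and interior DOFs yield the analogous identity because they are also of the form ``value at a node, dotted with a fixed tangent vector''. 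Summing over all $\eta$ and combining with the first step gives the claimed equality.

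I do not expect a genuine obstacle here; the content of the lemma is essentially that point-evaluation DOFs commute with multiplication by continuous scalars, which reduces to the pointwise identity $(\chi\Vector{v}_h)(\eta)\cdot\Vector{\tau} = \chi(\eta)\,\Vector{v}_h(\eta)\cdot\Vector{\tau}$. The only mild subtlety is ensuring that each DOF remains defined on $\chi\Vector{v}_h$, which is immediate from $\chi\in C(K)$ and the smoothness of $\Vector{v}_h$.
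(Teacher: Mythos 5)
Your proposal is correct and is essentially the same argument the paper intends: its proof consists of the single sentence ``follows directly from Definition \ref{def:newdof}'', and your write-up simply spells out that the point-evaluation degrees of freedom satisfy $\widetilde{M}_\eta(\chi\Vector{v}_h)=\chi(\eta)\widetilde{M}_\eta(\Vector{v}_h)$, which together with the nodal-basis decomposition gives the claim.
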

\begin{proof}
    The proof follows directly from Definition \ref{def:newdof}.
\end{proof}

\section{The discrete $L^2$ norm for N\'ed\'elec finite element functions}\label{sec:appendixB}
In this section, we will briefly introduce the discrete $L^2$ norm for N\'ed\'elec finite element functions. As in Appendix \ref{sec:appendixA} and refer to \cite{monk2003finite}, for a tetrahedron $K$, let $R_p(K)$ be the N\'ed\'elec function space on $K$. Meanwhile, for the reference element
$\widehat{K}$ and the affine map $\Phi_K:~\widehat{K}\rightarrow K$, the transformation induced by $\Phi_K$ is defined as
\begin{equation}\label{eq:ndfuncTrans}
    \Vector{v}(\Phi_K(\widehat{\Vector{x}})) = (B_K^t)^{-1}\widehat{\Vector{v}}(\widehat{\Vector{x}})\in R_p(K), \quad \forall\widehat{\Vector{v}}\in R_p(\widehat{K}),
\end{equation}
where $B_K = (\nabla\Phi_K)^t$ is the Jacobian of $\Phi_K$. Note that the N\'ed\'elec finite element could also be built in 2D for triangle $K$ where the resulting finite element is the so-called rotated Raviart–Thomas element \cite{monk2003finite}.

We have the following lemma.
\begin{lemma}[{\cite[Lemma 5.43]{monk2003finite}}]\label{lemma:scalingHcurl}
    Assume that $\mathcal{T}_h$ is a quasi-uniform mesh posed on $\Omega\subset\mathbb{R}^d$ with mesh size $h$. Then for any $s\geq 0$, the mapping $\widehat{\Vector{v}}\rightarrow \Vector{v}=(B_K^t)^{-1}\widehat{\Vector{v}}\circ\Phi_K^{-1}$ satisfies
    \begin{align*}
        &\vert{\widehat{\Vector{v}}}\vert_{s,\widehat{K}}\lesssim h^{s+1-\frac{d}{2}}\vert{\Vector{v}}\vert_{s,K},\\
        &\vert{\Vector{v}}\vert_{s,K}\lesssim h^{\frac{d}{2}-s-1}\vert{\widehat{\Vector{v}}}\vert_{s,\widehat{K}}.
    \end{align*}
    When $d=3$, there also holds
    \begin{align*}
        &\vert{\widehat{\nabla}\times\widehat{\Vector{v}}}\vert_{s,\widehat{K}}\lesssim h^{s+\frac{1}{2}}\vert{\nabla\times\Vector{v}}\vert_{s,K},\\
        &\vert{\nabla\times\Vector{v}}\vert_{s,K}\lesssim h^{-s-\frac{1}{2}}\vert{\widehat{\nabla}\times\widehat{\Vector{v}}}\vert_{s,\widehat{K}}.
    \end{align*}
\end{lemma}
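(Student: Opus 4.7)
Since this lemma is the classical covariant-Piola scaling estimate for $H(\mathbf{curl})$-conforming elements (stated here as a citation from Monk), the proof is a standard change-of-variables argument. My plan is to derive both pairs of bounds from three ingredients: the affine transform bounds on $\Phi_K$, the chain rule applied to the vector Piola pullback, and a commuting identity between the curl and the pullback.

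First I would recall that since $\mathcal{T}_h$ is quasi-uniform of size $h$ and $\Phi_K:\widehat{K}\to K$ is affine, one has $\Vert B_K\Vert \sim h$, $\Vert B_K^{-1}\Vert\sim h^{-1}$, and $|\det B_K|\sim h^d$. Combined with the change-of-variable formula this immediately yields, for any scalar $\widehat{f}\in L^2(\widehat{K})$ and $f=\widehat{f}\circ\Phi_K^{-1}$, the scalings $\Vert \widehat{f}\Vert_{0,\widehat{K}}\sim h^{-d/2}\Vert f\Vert_{0,K}$ and $|\widehat{f}|_{s,\widehat{K}}\sim h^{s-d/2}|f|_{s,K}$. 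For the \emph{vector} Piola pullback $\Vector{v}=(B_K^t)^{-1}\widehat{\Vector{v}}\circ\Phi_K^{-1}$, one extra factor of $\Vert B_K^{-t}\Vert\sim h^{-1}$ (in the right direction) appears from multiplying by $(B_K^t)^{-1}$ componentwise, which produces the claimed powers $h^{s+1-d/2}$ and $h^{d/2-s-1}$. Higher-order seminorms are treated by differentiating under the pullback: each additional spatial derivative on $K$ costs a factor $\Vert B_K^{-1}\Vert\sim h^{-1}$, matching what the exponent $s$ demands.

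For the curl estimates, the crucial identity is the commutation relation for the covariant Piola transform,
\begin{equation*}
    \nabla\times\Vector{v} = \frac{1}{\det B_K}\, B_K\bigl(\widehat{\nabla}\times\widehat{\Vector{v}}\bigr)\circ\Phi_K^{-1},
\end{equation*}
which can be verified by direct computation from $\Vector{v}=(B_K^t)^{-1}\widehat{\Vector{v}}\circ\Phi_K^{-1}$ using the chain rule. In $d=3$, $\nabla\times\Vector{v}$ thus transforms contravariantly (like a Raviart–Thomas vector). Applying the scalar scaling argument componentwise to $\widehat{\nabla}\times\widehat{\Vector{v}}$ together with $|\det B_K|\sim h^3$ and $\Vert B_K\Vert\sim h$ gives, for $s=0$, the bound $\Vert\widehat{\nabla}\times\widehat{\Vector{v}}\Vert_{0,\widehat{K}}\sim h^{1/2}\Vert\nabla\times\Vector{v}\Vert_{0,K}$, and the higher-order version follows by tracking one extra $h^{-1}$ per derivative order, producing $h^{s+1/2}$ and $h^{-s-1/2}$.

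The only place requiring genuine care is the higher-order seminorms under the vector pullback, since $B_K$ depends on $K$ and one must be sure that all the cross-terms from differentiating products are controlled by $\Vert B_K^{\pm 1}\Vert$ raised to the correct power; this is essentially a bookkeeping exercise, straightforward because $\Phi_K$ is affine (so $B_K$ is constant and higher derivatives of $\Phi_K$ vanish). The main obstacle, if one wanted a fully self-contained proof rather than a citation, would simply be organizing these derivative estimates cleanly for general $s$; the individual scaling identities and the Piola commutation formula are elementary.
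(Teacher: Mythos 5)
The paper does not give a proof of this lemma at all; it is cited directly as Lemma~5.43 of Monk's textbook \cite{monk2003finite}. Your proposal is therefore not in competition with a proof in the paper, but it is a correct and complete sketch of the standard argument that Monk himself uses: affine scaling of $B_K$, change of variables, and the covariant Piola commutation identity
\begin{equation*}
    \nabla\times\Vector{v} = \frac{1}{\det B_K}\,B_K\bigl(\widehat{\nabla}\times\widehat{\Vector{v}}\bigr)\circ\Phi_K^{-1}.
\end{equation*}
The bookkeeping you describe is right: each $\widehat{K}$-derivative of $\widehat{\Vector{v}} = B_K^t\,\Vector{v}\circ\Phi_K$ picks up a factor $\Vert B_K\Vert\sim h$, the Piola factor $B_K^t$ contributes one more $h$, and the Jacobian $|\det B_K|^{-1/2}\sim h^{-d/2}$ gives the final exponent $s+1-\tfrac d2$; reversing roles gives the other inequality. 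For the curl bounds, writing $\widehat{\nabla}\times\widehat{\Vector{v}} = (\det B_K)\,B_K^{-1}(\nabla\times\Vector{v})\circ\Phi_K$ and collecting $|\det B_K|\sim h^3$, $\Vert B_K^{-1}\Vert\sim h^{-1}$, $h^s$ per derivative, and the $h^{-3/2}$ Jacobian indeed yields $h^{s+1/2}$, which is what the lemma states. You correctly flag that the only nontrivial part for general $s$ is keeping the derivative cross-terms straight, and that this collapses to a pure power count because $\Phi_K$ is affine and $B_K$ constant. In short: your proof is correct and matches the standard textbook route; nothing is missing.
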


As in the case of classical $H^1$ finite element functions, we could define the discrete $L^2$ norm of N\'ed\'elec finite element functions. Recall that we use $\mathcal{N}_h(\Omega)$ denote the set consists of all
edges/faces/elements on $\Omega$. Then for $\Vector{v}_h\in V_h(\Omega)$, we could define its discrete $L^2$ norm as
\begin{equation*}
    \Vert{\Vector{v}_h}\Vert_{0,h,\Omega}^2 \coloneqq h^{d-2}\sum_{\xi\in \mathcal{N}_h(\Omega)}\sum_{i=1}^{n_\xi}\vert M_{\xi}^i(\Vector{v}_h)\vert^2.
\end{equation*}
The following lemma can be proved by the scaling argument.
\begin{lemma}\label{lemma:discreteL2NormSimND}
    Assume that $\mathcal{T}_h$ is quasi-uniform, then for any $\Vector{v}_h\in V_h(\Omega)$ there holds
    \begin{equation*}
        \Vert{\Vector{v}_h}\Vert_{0,h,\Omega}\sim \Vert{\Vector{v}_h}\Vert_{0,\Omega}.
    \end{equation*}
\end{lemma}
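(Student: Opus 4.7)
The plan is a standard scaling argument that reduces the claim to a finite-dimensional norm equivalence on a reference element. First, both sides decompose element-wise: $\|\Vector{v}_h\|_{0,\Omega}^2 = \sum_{K} \|\Vector{v}_h\|_{0,K}^2$, and $\|\Vector{v}_h\|_{0,h,\Omega}^2$ can be regrouped as $\sum_K h^{d-2}\sum_{\xi\in\mathcal{N}_h(K)} \sum_i |M_\xi^i(\Vector{v}_h)|^2$, with each edge/face counted with bounded multiplicity thanks to shape-regularity. Hence it suffices to prove the local equivalence
\[
\|\Vector{v}_h\|_{0,K}^2 \;\sim\; h^{d-2} \sum_{\xi \in \mathcal{N}_h(K)} \sum_{i=1}^{n_\xi} |M_\xi^i(\Vector{v}_h)|^2,\qquad \forall\,\Vector{v}_h\in V_h(K),
\]
with constants independent of $K$ and $h$, and then sum over $K\in\mathcal{T}_h$.

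To prove this local equivalence, I would pull back to the reference element $\widehat{K}$ via the covariant transformation $\widehat{\Vector{v}}=B_K^t\Vector{v}\circ\Phi_K$ from \eqref{eq:ndfuncTrans}. Since $R_p(\widehat{K})$ is finite-dimensional and the N\'ed\'elec moments are unisolvent on it, every seminorm built from the moment values is equivalent to the $L^2(\widehat{K})$ norm, i.e.
\[
\|\widehat{\Vector{v}}\|_{0,\widehat{K}}^2 \;\sim\; \sum_{\widehat{\xi}\in\mathcal{N}(\widehat{K})} \sum_{i=1}^{n_{\widehat{\xi}}} |\widehat{M}_{\widehat{\xi}}^i(\widehat{\Vector{v}})|^2,\qquad \forall\,\widehat{\Vector{v}}\in R_p(\widehat{K}).
\]
Lemma \ref{lemma:scalingHcurl} with $s=0$ gives $\|\Vector{v}_h\|_{0,K}^2 \sim h^{d-2}\|\widehat{\Vector{v}}_h\|_{0,\widehat{K}}^2$, which handles the scaling of the left-hand side of the desired estimate.

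The crux, and the main technical obstacle, is to verify that the physical and reference N\'ed\'elec moments are comparable with $h$-independent constants, so that $\sum_{\xi,i}|M_\xi^i(\Vector{v}_h)|^2 \sim \sum_{\widehat{\xi},i}|\widehat{M}_{\widehat{\xi}}^i(\widehat{\Vector{v}}_h)|^2$. For an edge moment $\int_e \Vector{v}_h\cdot\tau_e\,q\,d\ell$, combining $\tau_e = B_K\widehat{\tau}_{\widehat{e}}/|B_K\widehat{\tau}_{\widehat{e}}|$, $d\ell=|B_K\widehat{\tau}_{\widehat{e}}|\,d\widehat{\ell}$ together with the algebraic identity $(B_K^t)^{-1}\widehat{\Vector{v}}_h\cdot B_K\widehat{\tau}_{\widehat{e}} = \widehat{\Vector{v}}_h\cdot\widehat{\tau}_{\widehat{e}}$ yields the scale-invariant identity $\Vector{v}_h\cdot\tau_e\,d\ell = \widehat{\Vector{v}}_h\cdot\widehat{\tau}_{\widehat{e}}\,d\widehat{\ell}$, so the edge moment is exactly preserved once $q$ is canonically pulled back. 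An analogous tangential-trace computation, using that the covariant transformation preserves $\Vector{v}\times\Vector{n}$ up to a surface-Jacobian factor that cancels against $d\sigma$, covers the face moments; while for the interior (volume) moments the remaining Jacobian factors combine with the quasi-uniform bounds $|B_K^{\pm 1}|,|\det B_K|\sim h^{\pm*}$ to produce two-sided bounds with $h$-independent constants, consistent with the single weighting factor $h^{d-2}$ in the definition of $\|\cdot\|_{0,h,\Omega}$. Chaining the three ingredients—reference-element norm equivalence, $L^2$ scaling, and moment invariance—yields the local equivalence, and summing over $K\in\mathcal{T}_h$ completes the proof.
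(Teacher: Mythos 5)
Your proposal is correct and follows essentially the same route as the paper: reduce to a single element, pull back to the reference element via the covariant map, invoke the finite-dimensional equivalence between the moment-based norm and $\Vert\cdot\Vert_{0,\widehat{K}}$, and apply the $s=0$ scaling estimates of Lemma \ref{lemma:scalingHcurl}. The only difference is cosmetic: you verify the invariance of the N\'ed\'elec moments under the transformation explicitly, whereas the paper simply asserts the identity $\sum_{\xi,i}\vert M_\xi^i(\Vector{v}_h)\vert^2=\sum_{\xi,i}\vert M_\xi^i(\widehat{\Vector{v}}_h)\vert^2$ without proof.
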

\begin{proof}
    It only needs to prove element by element. Let $\mathcal{N}_h(K)\subset\mathcal{N}_h(\Omega)$ consist of all of the edges/faces of the element $K$ and $K$ itself. We have
    \begin{equation*}
        \sum_{\xi\in\mathcal{N}_h(K)}\sum_{i=1}^{n_\xi}\vert M_{\xi}^i(\Vector{v}_h)\vert^2 = \sum_{\xi\in\mathcal{N}_h(\widehat{K})}\sum_{i=1}^{n_\xi}\vert M_{\xi}^i(\widehat{\Vector{v}}_h)\vert^2\coloneqq \vertiii \widehat{\Vector{v}}_h\vertiii^2.
    \end{equation*}
    It's easy to check that $\vertiii{\cdot}\vertiii$ is a norm on $R_p(\widehat{K})$. Note that $R_p(\widehat{K})$ is finite dimensional. Then there holds
    \begin{equation*}
        \vertiii{\widehat{\Vector{v}}_h}\vertiii \sim \Vert{\widehat{\Vector{v}}_h}\Vert_{0,\widehat{K}}.
    \end{equation*}
    Further, from Lemma \ref{lemma:scalingHcurl}, we can obtain that
    \begin{equation}\label{eq:discreteL2NormSim1}
        \sum_{\xi\in\mathcal{N}_h(K)}\sum_{i=1}^{n_\xi}\vert M_{\xi}^i(\Vector{v}_h)\vert^2\sim \Vert{\widehat{\Vector{v}}_h}\Vert_{0,\widehat{K}}^2 \lesssim h^{2-d} \Vert{\Vector{v}_h}\Vert_{0,K}^2.
    \end{equation}
    On the other hand, it can be derived from Lemma \ref{lemma:scalingHcurl} again that
    \begin{equation}\label{eq:discreteL2NormSim2}
        \Vert{\Vector{v}_h}\Vert_{0,K}^2\lesssim h^{d-2}\Vert{\widehat{\Vector{v}}_h}\Vert_{0,\widehat{K}}^2\sim h^{d-2}\vertiii{\widehat{\Vector{v}}_h}\vertiii^2 = h^{d-2}\sum_{\xi\in\mathcal{N}_h(K)}\sum_{i=1}^{n_\xi}\vert M_{\xi}^i(\Vector{v}_h)\vert^2.
    \end{equation}
    \eqref{eq:discreteL2NormSim1} and \eqref{eq:discreteL2NormSim2} finally lead to
    \begin{align*}
        \Vert{\Vector{v}_h}\Vert_{0,h,\Omega}^2
        &= h^{d-2}\sum_{\xi\in \mathcal{N}_h(\Omega)}\sum_{i=1}^{n_\xi}\vert M_{\xi}^i(\Vector{v}_h)\vert^2 \cr
        &\sim h^{d-2}\sum_{K\in\mathcal{T}_h}\sum_{\xi\in\mathcal{N}_h(K)}\sum_{i=1}^{n_\xi}\vert M_{\xi}^i(\Vector{v}_h)\vert^2\\
        &\sim\sum_{K\in\mathcal{T}_h}\Vert{\Vector{v}_h}\Vert_{0,K}^2=\Vert{\Vector{v}_h}\Vert_{0,\Omega}^2.
    \end{align*}
\end{proof}

\bibliographystyle{siamplain}

\end{document}